\newtheorem{theorem}{Theorem}
\newtheorem{proposition}[theorem]{Proposition}
\newtheorem{corollary}[theorem]{Corollary}
\newtheorem{lemma}[theorem]{Lemma}
\newtheorem {remark}[theorem]{Remark}
\title[Phase portraits for a class of polynomial vector fields on
$\mathbb S^{2}$] {Phase portraits for quadratic homogeneous
polynomial vector fields on $\mathbb S^{2}$}
\author[Jaume Llibre and Claudio Pessoa]{}
\thanks{The authors are partially supported by a MCYT
grant MTM2005--06098--C02--01 and by a CIRIT  grant number
2005SGR00550.}
  \subjclass{Primary 34C35, 58F09; Secondary 34D30}
   \keywords{invariant algebraic curves, limit cycles, centers,
polynomial vector field}
\begin{document}
 \maketitle

\centerline{\scshape  Jaume Llibre and Claudio Pessoa}
\medskip

{\footnotesize \centerline{ Departament de Matem\`{a}tiques,
Universitat Aut\`{o}noma de Barcelona} \centerline{08193
Bellaterra, Barcelona, Spain }
\centerline{\email{jllibre@mat.uab.es, pessoa@mat.uab.es}}}

\medskip

\bigskip

\begin{quote}{\normalfont\fontsize{8}{10}\selectfont
{\bfseries Abstract.} Let $X$ be a homogeneous polynomial vector
field of degree $2$ on $\mathbb S^2$. We show that if $X$ has at
least a non--hyperbolic singularity, then it has no limit cycles.
We give necessary and sufficient conditions for determining if a
singularity of $X$ on $\mathbb S^2$ is a center and we
characterize the global phase portrait of $X$ modulo limit cycles.
We also study the Hopf bifurcation of $X$ and we reduce the
$16^{th}$ Hilbert's problem restricted to this class of polynomial
vector fields to the study of two particular families. Moreover,
we present two criteria for studying the nonexistence of periodic
orbits for homogeneous polynomial vector fields on $\mathbb S^2$
of degree $n$.
\par}
\end{quote}

\section{Introduction and statement of the main results}

\noindent A {\it polynomial vector field $X$} in $\mathbb R^{3}$
is a vector field of the form
\begin{equation}
\label{eq:01} X=P(x,y,z)\frac{\partial}{\partial
x}+Q(x,y,z)\frac{\partial}{\partial
y}+R(x,y,z)\frac{\partial}{\partial z},
\end{equation}
where $P$, $Q$ and $R$ are polynomials in the variables $x$, $y$
and $z$ with real coefficients. We denote $n=\max\{\deg P, \deg Q,
\deg R\}$ the {\it degree} of the polynomial vector field $X$. In
what follows $X$ will denote the above polynomial vector field.

Let $\mathbb S^{2}$ be the $2$--dimensional sphere $\{(x,y,z)\in
\mathbb R^{3}\; : x^2+y^2+z^2=1\}$. A {\it polynomial vector field
$X$ on $\mathbb S^{2}$} is a polynomial vector field in $\mathbb
R^{3}$ such that restricted to the sphere $\mathbb S^{2}$ defines
a vector field on $\mathbb S^{2}$; i.e. it must satisfy the
equality
\begin{equation}
\label{eq:02} xP(x,y,z)+yQ(x,y,z)+zR(x,y,z)=0,
\end{equation}
for all points $(x,y,z)$ of the sphere $\mathbb S^{2}$.

Let $f\in \mathbb R [x,y,z]$, where $\mathbb R [x,y,z]$ denotes
the ring of all polynomials in the variables $x$, $y$ and $z$ with
real coefficients. The algebraic surface $f=0$ is an {\it
invariant algebraic surface} of the polynomial vector field $X$ if
for some polynomial $K\in \mathbb R [x,y,z]$ we have
$\displaystyle Xf=P\frac{\partial f}{\partial x} +Q\frac{\partial
f}{\partial y}+R\frac{\partial f}{\partial z}=Kf$. The polynomial
$K$ is called the {\it cofactor} of the invariant algebraic
surface $f=0$. We note that since the polynomial system has degree
$n$, then any cofactor has at most degree $n-1$.

The algebraic surface $f=0$ defines an {\it invariant algebraic
curve } $\{f=0\}\cap \mathbb S^2$ of the polynomial vector field
$X$ on the sphere $\mathbb S^2$ if
\begin{itemize}
\item[(i)] for some polynomial $K\in \mathbb R [x,y,z]$ we have
$\displaystyle Xf=P\frac{\partial f}{\partial x} +Q\frac{\partial
f}{\partial y}+R\frac{\partial f}{\partial z}=Kf$, on all the
points $(x,y,z)$ of the sphere $\mathbb S^2$, and \item[(ii)]the
intersection of the two surfaces $f=0$ and $\mathbb S^2$ is
transversal; i.e. for all points $(x,y,z)\in \{f=0\}\cap \mathbb
S^2$ we have that $\displaystyle (x,y,z)\land \left(\frac{\partial
f}{\partial x},\frac{\partial f}{\partial y},\frac{\partial
f}{\partial z}\right)\neq 0$, where $\land$ denotes the vector
cross product in $\mathbb R^3$.
\end{itemize}
Again the polynomial $K$ is called the {\it cofactor} of the
invariant algebraic curve $\{f=0\}\cap \mathbb S^2$.

Note that, if a curve $\{f=0\}\cap \mathbb S^2$ satisfies the
above definition, then it is formed by trajectories of the vector
field $X$. This justifies to call $\{f=0\}\cap \mathbb S^2$ an
invariant algebraic curve, since in this case it is invariant
under flow defined by $X$ on $\mathbb S^2$.

The equation of a plane in $\mathbb R^3$ is given for
$ax+by+cz+d=0$. Any circle on the sphere lies in a plane
$ax+by+cz+d=0$, where we can assume that $a^2+b^2+c^2=1$ and
$0\leq -d<1$. If the invariant algebraic curve $\{f=0\}\cap
\mathbb S^2$ is contained in some plane, then we say that
$\{f=0\}\cap \mathbb S^2$ is an {\it invariant circle} of the
polynomial vector field $X$ on the sphere $\mathbb S^2$. Moreover,
if the plane contains the origin, then $\{f=0\}\cap \mathbb S^2$
is an {\it invariant great circle}.

Let $U$ be an open subset of $\mathbb R^{3}$. Here a nonconstant
analytic function $H:U\rightarrow \mathbb R$ is called a {\it
first integral} of the system on $U$ if it is constant on all
solutions curves $(x(t), y(t),z(t))$ of the vector field $X$ on
$U$; i.e. $H(x(t), y(t),z(t))=$ constant for all values of $t$ for
which the solution $(x(t), y(t),z(t))$ is defined in $U$. Clearly
$H$ is a first integral of the vector field $X$ on $U$ if and only
if $XH\equiv 0$ on $U$. If $X$ is a vector field on $\mathbb S^2$,
the definition of {\it first integral} on $\mathbb S^2$ is the
same substituting $U$ by $U\cap \mathbb S^2$.

In what follows we say that two phase portraits of the vector
fields $X_{1}$ and $X_{2}$ on $\mathbb S^2$ are
\index{topologically equivalent}({\it topologically }) {\it
equivalent}, if there exists a homeomorphism $h:\mathbb S^2
\rightarrow \mathbb S^2$ such that $h$ applies orbits of $X_{1}$
into orbits of $X_{2}$, preserving or reversing the orientation of
all orbits. Similarly we define ({\it topological}) {\it
equivalence} in the Poincar\'e disc, see Section \ref{sec:05} for
a definition.

In \cite{ref:15} we studied homogeneous polynomial vector fields
of degree $2$ on $\mathbb S^{2}$, and we determined the maximum
number of invariant circles when it has finitely many invariant
circles. Moreover, we characterized the global phase portraits of
these vector fields when they have finitely many invariant
circles. Camacho \cite{ref:2} in $1981$ proved some properties of
this kind of vector fields, see also \cite{ref:22}. One of the
results that can be found in \cite{ref:15} and that will be
necessary in this paper is given below.

\begin{proposition}
\label{pro:05} Let $X$ be a homogeneous polynomial vector field of
degree $2$ on $\mathbb R^3$. Then $X$ is a polynomial vector field
on $\mathbb S^{2}$ if and only if the system associated to it can
be written as
\begin{equation}
\label{eq:04}
\begin{array}{lcl} \dot{x} & = & P(x,y,z)=
a_{1}xy+a_{2}y^2+a_{3}z^2+a_{4}xz+a_{5}yz, \\ \dot{y} & = &
Q(x,y,z)= -a_{1}x^2-a_{2}xy+a_{6}z^2+a_{7}xz+a_{8}yz, \\ \dot{z} &
= & R(x,y,z)= -a_{4}x^2-a_{8}y^2-(a_{5}+a_{7})xy-a_{3}xz-a_{6}yz.
\end {array}
\end{equation}
\end{proposition}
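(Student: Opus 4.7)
The plan is to translate the tangency condition \eqref{eq:02} into a polynomial identity and then solve a linear system of constraints on the coefficients of $P$, $Q$, $R$.

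First I would observe that $F(x,y,z):=xP+yQ+zR$ is a homogeneous polynomial of degree $3$, since $P,Q,R$ are homogeneous of degree $2$. The hypothesis that $F$ vanishes on $\mathbb S^2$ forces $F\equiv 0$ as a polynomial in $\mathbb R[x,y,z]$: for any $(x_0,y_0,z_0)\in\mathbb R^3\setminus\{0\}$, the point $(x_0,y_0,z_0)/\sqrt{x_0^2+y_0^2+z_0^2}$ lies on $\mathbb S^2$, so by homogeneity $F(x_0,y_0,z_0)$ is a positive multiple of the value of $F$ at that point, which is $0$. Hence the tangency condition is equivalent to the polynomial identity $xP+yQ+zR=0$.

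Next I would write $P$, $Q$, $R$ as generic homogeneous quadratics with $6$ coefficients each, giving $18$ unknowns. Expanding $xP+yQ+zR$ produces a homogeneous cubic, and setting to zero the coefficients of the ten cubic monomials $x^3,y^3,z^3,x^2y,xy^2,x^2z,xz^2,y^2z,yz^2,xyz$ yields ten linear equations. The coefficients of the pure cubes $x^3$, $y^3$, $z^3$ immediately kill the $x^2$-coefficient of $P$, the $y^2$-coefficient of $Q$ and the $z^2$-coefficient of $R$. The remaining seven equations couple pairs (or, in the case of $xyz$, a triple) of cross-term coefficients; solving them leaves an $8$-dimensional space of solutions, and labeling the eight free parameters appropriately recovers exactly the system \eqref{eq:04}. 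For the converse, I would simply substitute \eqref{eq:04} into $xP+yQ+zR$ and verify that every monomial cancels, which is a short direct computation.

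The main obstacle is purely notational: one must choose the eight free parameters so that the resulting parametrization matches \eqref{eq:04} verbatim, including the signs and the combination $a_5+a_7$ in the coefficient of $xy$ in $R$. There is no essential difficulty beyond careful bookkeeping in the linear algebra.
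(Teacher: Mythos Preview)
Your proposal is correct. Note, however, that the paper does not actually supply a proof of this proposition: it is quoted from \cite{ref:15} as a known result needed later. Your argument---using homogeneity to upgrade the tangency condition on $\mathbb S^2$ to the polynomial identity $xP+yQ+zR\equiv 0$ on all of $\mathbb R^3$ (this is exactly the content of Proposition~\ref{pro:02}, also cited to \cite{ref:15}), and then solving the resulting linear system on the eighteen coefficients---is the standard and natural proof, and is almost certainly what appears in the cited reference.
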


The main results of this paper are the following ones.

The next theorem characterizes the centers of the quadratic
homogeneous polynomial vector fields on $\mathbb S^2$.

\begin{theorem}
\label{the:08} Let $X$ be a homogeneous polynomial vector field on
$\mathbb S^2$ of degree $2$ and let $p=(0,0,-1)$ be a singularity
of $X$; i.e. the system associated to $X$ can be written in the
form \eqref{eq:04} with $a_3=a_6=0$. Then $p$ is a center of $X$
if and only if $a_8=-a_4$, $a_4^2+a_7a_5<0$ and
$a_4(a_1^2-a_2^2)+a_1a_2(a_5+a_7)=0$.
\end{theorem}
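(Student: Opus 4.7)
My plan is to reduce the problem to a standard center--focus analysis for a planar system near $p=(0,0,-1)$.

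I would first pass to the plane via the central projection $(u,v)=(-x/z,-y/z)$, which is a diffeomorphism of the southern hemisphere of $\mathbb S^{2}$ onto $\mathbb R^{2}$ sending $p$ to the origin. Since $X$ is homogeneous of degree $2$, rescaling time by the strictly positive factor $-z$ turns the projected vector field into a polynomial system of degree $3$ in $(u,v)$. Computing from \eqref{eq:04} with $a_3=a_6=0$, I find that the linearization at the origin is
\[
J=\begin{pmatrix}-a_4 & -a_5\\ -a_7 & -a_8\end{pmatrix},
\]
the quadratic part equals $(a_1u+a_2v)(v,-u)$ and is purely tangent to the circles $u^{2}+v^{2}=\text{const}$, and the cubic part equals $-(a_4u^{2}+(a_5+a_7)uv+a_8v^{2})(u,v)$ and is purely radial.

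Necessity of the first two conditions is then immediate: a center at $p$ requires $J$ to have purely imaginary eigenvalues, which amounts to $\operatorname{tr}J=-(a_4+a_8)=0$ and $\det J=-(a_4^{2}+a_5a_7)>0$. Assuming these, $p$ is at worst a weak focus. In polar coordinates, the radial/tangential split above gives
\[
\dot r = -r(1+r^{2})\,A(\theta),\qquad \dot\theta = B(\theta)-r\,(a_1\cos\theta+a_2\sin\theta),
\]
where $A(\theta)=a_4\cos 2\theta+\tfrac{a_5+a_7}{2}\sin 2\theta$ and $B(\theta)=\tfrac{a_5-a_7}{2}+a_4\sin 2\theta-\tfrac{a_5+a_7}{2}\cos 2\theta$, together with the key identity $B'(\theta)=2A(\theta)$. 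Expanding $dr/d\theta$ as a power series in $r$ and integrating term by term over $\theta\in[0,2\pi]$, the identity $B'=2A$ makes the coefficient of $r_0$ in the return map $r(2\pi)-r_0$ vanish automatically (it reduces to $\tfrac{1}{2}\int_0^{2\pi} B'/B\,d\theta=0$ by periodicity of $B$), and the first non-trivial focal value --- obtained after integration by parts using $A=B'/2$ --- is a nonzero scalar multiple of
\[
a_4(a_1^{2}-a_2^{2})+a_1a_2(a_5+a_7).
\]
Its vanishing yields the third condition and completes the necessity direction.

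For sufficiency I must show that the three conditions actually force $p$ to be a center rather than merely a weak focus of order $\ge 2$. I would argue by producing a local analytic first integral of the projected system at the origin: either by a Darboux construction based on invariant circles of $X$ through $p$ (whose existence under the three hypotheses I would extract from the classification in \cite{ref:15}), or, equivalently, by choosing a suitable rotation $(u,v)\mapsto(u\cos\alpha-v\sin\alpha,\,u\sin\alpha+v\cos\alpha)$ after which the planar system becomes time--reversible, i.e.\ invariant under $(u,v,t)\mapsto(u,-v,-t)$. Either route yields an analytic first integral at the origin with closed--curve level sets, and the center property of $p$ follows. The main obstacle is precisely this step: identifying the symmetry, or equivalently the invariant algebraic curves and their cofactors, responsible for the vanishing of all higher Lyapunov coefficients; once it is in hand, the remainder of the argument is routine.
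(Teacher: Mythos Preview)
Your necessity argument is correct and, in fact, cleaner than the paper's. The paper puts the linear part into rotation form by an affine change of variables, writes $H=\tfrac12(r^2+s^2)+\sum H_j$ and solves for the first Lyapunov coefficient $V_1$, obtaining $V_1=\bigl((a_5-a_7)(a_4(a_1^2-a_2^2)+a_1a_2(a_5+a_7))\bigr)/\bigl(8a_7(-a_4^2-a_5a_7)^{3/2}\bigr)$; since $a_4^2+a_5a_7<0$ forces $a_5\neq a_7$, this gives the same condition. Your polar route, exploiting the tangential/radial split and the identity $B'=2A$, reaches the same conclusion with less bookkeeping; it also makes transparent that $B$ never vanishes precisely under the hypothesis $a_4^2+a_5a_7<0$.

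Your sufficiency argument, however, is only a plan. The paper carries out exactly your second suggestion (reversibility after a rotation), via an explicit case split on $(a_1,a_2)$. If $a_1=a_4=0$, $a_2\neq0$, the projected system \eqref{eq:115} is invariant under $(u,v,t)\mapsto(-u,v,-t)$ and the center follows; the case $a_2=0$, $a_1\neq0$ reduces to this by swapping $u\leftrightarrow v$. If $a_1a_2\neq0$, the condition $a_4(a_1^2-a_2^2)+a_1a_2(a_5+a_7)=0$ is used to solve for $a_5$, and then the orthogonal rotation of the $(x,y)$-plane with matrix $\bigl(\begin{smallmatrix}-a_2&a_1\\a_1&a_2\end{smallmatrix}\bigr)/\sqrt{a_1^2+a_2^2}$ kills both $\tilde a_1$ and $\tilde a_4$, reducing to the reversible case. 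Finally, if $a_1=a_2=0$ the paper writes down a rational first integral; note that in your polar variables this case has $\dot\theta=B(\theta)$ independent of $r$, so $dr/(r(1+r^2))=-\tfrac12\,d\log B$ integrates at once to the first integral $r^2B(\theta)/(1+r^2)$. Your Darboux/invariant-circle suggestion is not needed and is not how the paper proceeds; the reversibility route you propose is the right one, but you still need to exhibit the rotation angle and verify that it simultaneously kills the obstruction to the reflection symmetry, which is exactly what the case analysis above accomplishes.
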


Theorem \ref{the:08} will be proved in Section \ref{sec:10}.

In the next two results we provide sufficient conditions for the
non--existence of periodic orbits (and in particular of limit
cycles) for homogeneous polynomial vector fields on $\mathbb S^2$
of arbitrary degree. For more details on limit cycles see
\cite{MHNM}.

We use the notation $\tilde{F}(u,v)=F(2u,2v,u^2+v^2-1)$, where
$F\in \mathbb R[x,y,z]$.

\begin{theorem}
\label{the:11} Let $X=(P,Q,R)$ be a homogeneous polynomial vector
field on $\mathbb S^2$. Then $X$ has no periodic orbits if at
least one of following conditions is satisfied.
\begin{itemize}
\item[(a)] The polynomial $\tilde{R}$ does not change sign on
$\mathbb R^2$ and $X$ does not have a periodic orbit passing
through the point $(0,0,1)$. \item[(b)] The function $\langle
(\tilde{P},\tilde{Q}),\nabla \tilde{R}\rangle\mid_{\tilde{R}=0}$
does not change sign and $X$ does not have a periodic orbit
passing through the point $(0,0,1)$.
\end{itemize}
\end{theorem}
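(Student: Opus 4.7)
The plan is to pass to the plane via the inverse stereographic projection from $N=(0,0,1)$,
$$\phi(u,v)=\left(\frac{2u}{1+u^2+v^2},\; \frac{2v}{1+u^2+v^2},\; \frac{u^2+v^2-1}{1+u^2+v^2}\right),$$
which identifies $\mathbb S^2\setminus\{N\}$ with $\mathbb R^2$. For any homogeneous polynomial $F$ of degree $n$, the identity $\tilde F(u,v)=(1+u^2+v^2)^n\,F(\phi(u,v))$ shows that $\tilde F$ and $F\circ\phi$ agree pointwise in sign; in particular the sign of $\tilde R$ on $\mathbb R^2$ matches that of $R$ on $\mathbb S^2\setminus\{N\}$. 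A direct computation of $\dot u,\dot v$ also gives the induced planar system
$$\dot u=\frac{\tilde P+u\tilde R}{2(1+u^2+v^2)^{n-1}},\qquad \dot v=\frac{\tilde Q+v\tilde R}{2(1+u^2+v^2)^{n-1}},$$
so that on the zero set $\{\tilde R=0\}$ the time derivative $\dot{\tilde R}$ is a positive scalar multiple of $\langle(\tilde P,\tilde Q),\nabla\tilde R\rangle$. The remaining elementary ingredient is that $\dot z=R$ along every trajectory of $X$; so for any periodic orbit $\gamma$ that avoids $N$ the periodicity of $z$ yields
$$\oint_\gamma R\,dt=\oint dz=0. \qquad(\ast)$$

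For part (a), I would assume $X$ has a periodic orbit $\gamma$. By hypothesis $\gamma$ avoids $N$, so by the sign equality $R$ has constant sign on $\gamma$, and $(\ast)$ forces $R\equiv 0$ there. Hence $z$ is constant on $\gamma$, and $\gamma$ lies in a horizontal circle $C_c=\mathbb S^2\cap\{z=c\}$; a connectedness argument using the absence of singular points on $\gamma$ gives $\gamma=C_c$. The stereographic image of $C_c$ is the circle $u^2+v^2=\rho^2$ with $\rho=\sqrt{(1+c)/(1-c)}$, on which $\tilde R$ vanishes while having constant sign globally; this forces the divisibility $(u^2+v^2-\rho^2)^2\mid\tilde R$, which combined with the homogeneity identity $xP+yQ+zR\equiv 0$ (equivalently $2u\tilde P+2v\tilde Q+(u^2+v^2-1)\tilde R=0$) produces the desired contradiction.

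For part (b), let $\gamma$ be a hypothetical periodic orbit avoiding $N$; the function $\tilde R$ restricted to $\tilde\gamma$ is a periodic function of time. If $\tilde R$ takes both signs along $\tilde\gamma$, then within one period it crosses zero both upward and downward, so $\dot{\tilde R}$ takes opposite signs at two points of $\{\tilde R=0\}\cap\tilde\gamma$; by the formula above, $\langle(\tilde P,\tilde Q),\nabla\tilde R\rangle$ then changes sign on $\{\tilde R=0\}$, contradicting (b). Otherwise $\tilde R$ has constant sign along $\tilde\gamma$, and $(\ast)$ forces $\tilde R\equiv 0$ there; we are again reduced to the horizontal circle scenario of (a).

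The clean part of the argument is the integration identity $(\ast)$ together with the sign computation on $\{\tilde R=0\}$. The main obstacle will be dispatching the degenerate case in which $\gamma$ is an entire horizontal circle with $R$ vanishing identically on it; eliminating this possibility requires marrying the divisibility structure imposed on $\tilde R$ with the homogeneity identity $xP+yQ+zR\equiv 0$, and is the only step beyond routine sign manipulations.
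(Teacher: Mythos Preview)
Your integral identity $(\ast)$ is essentially the same observation the paper uses, just seen from the sphere instead of the plane. The paper computes that $f(u,v)=u^2+v^2+1$ satisfies $\bar X f=\tilde R\,f$ for the stereographically induced field $\bar X=(\tilde P+u\tilde R,\tilde Q+v\tilde R)$; under the projection one has $f=2/(1-z)$, so $\frac{d}{ds}\log f=\tilde R$ is exactly $\dot z=R$ rewritten in planar time. Having obtained this cofactor relation, the paper does not argue case by case as you do; it invokes a black-box result (Theorem~\ref{the:10}, from Giacomini--Llibre--Viano): if $\int_\gamma \tilde R\,dt\neq 0$ for a periodic orbit $\gamma$ of $\bar X$, then $\gamma\subset\{f=0\}=\emptyset$, a contradiction. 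For part (b) the paper observes that $\langle\bar X,\nabla\tilde R\rangle|_{\tilde R=0}=\langle(\tilde P,\tilde Q),\nabla\tilde R\rangle|_{\tilde R=0}$, so the sign hypothesis makes $\{\tilde R=0\}$ a transversal; any periodic orbit then lies in a region where $\tilde R$ has fixed sign, and part (a) applies.

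Your self-identified ``main obstacle'' --- the possibility $\tilde R\equiv 0$ along $\gamma$ --- is real, but your proposed divisibility fix does not close it. From $(u^2+v^2-\rho^2)^2\mid\tilde R$ together with $2u\tilde P+2v\tilde Q+(u^2+v^2-1)\tilde R=0$ you only get $u\tilde P+v\tilde Q=0$ on the circle $u^2+v^2=\rho^2$, i.e.\ $(\tilde P,\tilde Q)$ is tangent there; but that is exactly what the circle being an orbit already says, so no contradiction emerges. The paper, for its part, simply asserts $\int_\gamma\tilde R\,dt\neq 0$ from the sign hypothesis and does not isolate this degenerate scenario. So your approach and the paper's are the same in substance, with the paper's packaging via Theorem~\ref{the:10} being cleaner; the residual issue you flag is not resolved by your sketch and is not addressed explicitly in the paper either.
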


Now, we use the notation $\mathcal K=aP+bQ+cR$,
$\tilde{F}_1(u,v)=F(u+a,v+b,c-(au+bv)/c)$,
$\tilde{F}_2(u,v)=F(a-(bu+cv)/a,u+b,v+c)$ and
$\tilde{F}_3(u,v)=F(u+a,b-(au+cv)/b,v+c)$ and $F\in \mathbb
R[x,y,z]$.
\begin{theorem}
\label{the:13} Suppose that the homogeneous polynomial vector
field $X=(P,Q,R)$ does not have periodic orbits on $\mathbb S^2$
intersecting the great circle $C$ determined by plane $ax+by+cz=0$
with $a^2+b^2+c^2=1$. Then it has no periodic orbits if at least
one of following conditions is satisfied.
\begin{itemize}
\item[(a)] The polynomial $\tilde{\mathcal K}_1$ does not change
sign on $\mathbb R^2$. \item[(b)] The function $\langle
(\tilde{P}_1,\tilde{Q}_1),\nabla \tilde{\mathcal
K}_1\rangle\mid_{\tilde{\mathcal K}_1=0}$ does not change sign.
\item[(c)] The polynomial $\tilde{\mathcal K}_2$ does not change
sign on $\mathbb R^2$. \item[(d)] The function $\langle
(\tilde{Q}_2,\tilde{R}_2),\nabla \tilde{\mathcal
K}_2\rangle\mid_{\tilde{\mathcal K}_2=0}$ does not change sign.
\item[(e)] The polynomial $\tilde{\mathcal K}_3$ does not change
sign on $\mathbb R^2$. \item[(f)] The function $\langle
(\tilde{P}_3,\tilde{R}_3),\nabla \tilde{\mathcal
K}_3\rangle\mid_{\tilde{\mathcal K}_3=0}$ does not change sign.
\end{itemize}
\end{theorem}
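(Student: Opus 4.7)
The plan is to reduce Theorem \ref{the:13} to its simplest special case $(a,b,c)=(0,0,1)$ by an orthogonal change of coordinates of $\mathbb R^{3}$, and then to adapt the projection-and-sign argument of Theorem \ref{the:11} using central projection from the origin onto the tangent plane to $\mathbb S^{2}$ at $(a,b,c)$. Since $a^{2}+b^{2}+c^{2}=1$, at least one of $a,b,c$ is nonzero; the three pairs of conditions (a,b), (c,d), (e,f) correspond respectively to the cases $c\neq 0$, $a\neq 0$, $b\neq 0$, so by the symmetry permuting the coordinates $(x,y,z)$ it suffices to prove (a) and (b) under the assumption $c\neq 0$.

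Given $c\neq 0$, I would fix an orthogonal matrix $T$ whose third row is $(a,b,c)$. Under $T$, the great circle $C$ maps to the equator $\{z'=0\}\cap\mathbb S^{2}$, and the rotated vector field $X'=T\circ X\circ T^{-1}$ is again a homogeneous polynomial vector field on $\mathbb S^{2}$ of the same degree, with third component $R'$ satisfying $R'\circ T=\mathcal K$. The hypothesis that $X$ has no periodic orbit intersecting $C$ becomes the statement that $X'$ has no periodic orbit intersecting the equator; hence any periodic orbit of $X'$ lies entirely in one of the open hemispheres $\{z'>0\}$ or $\{z'<0\}$. Since $X'$ is homogeneous, the antipodal map $p\mapsto -p$ conjugates orbits in one hemisphere with orbits in the other, so it suffices to exclude periodic orbits in $\{z'>0\}\cap\mathbb S^{2}$.

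For that last step I would parameterize the open upper hemisphere by central projection from the origin onto the plane $z'=1$ via $(u,v)\mapsto(u,v,1)$; in the original coordinates this is exactly the parameterization $\tilde F_{1}$. Pulling back $X'$ under this diffeomorphism yields, after a positive time-rescaling, an explicit planar vector field $Y$ on $\mathbb R^{2}$ whose orbits are in oriented bijection with the orbits of $X'$ in the hemisphere, so that periodic orbits of $X'$ there correspond to closed orbits of $Y$. For case (a), the hypothesis that $\tilde{\mathcal K}_{1}$ does not change sign on $\mathbb R^{2}$ translates to $R'(u,v,1)$ having constant sign, whence $z'$ is monotonic along orbits of $X'$ in the upper hemisphere, precluding periodic orbits there. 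For case (b), the sign condition on $\langle(\tilde P_{1},\tilde Q_{1}),\nabla\tilde{\mathcal K}_{1}\rangle$ along $\{\tilde{\mathcal K}_{1}=0\}$ means that $Y$ always crosses the curve $\{R'(u,v,1)=0\}$ in the same direction, excluding closed orbits of $Y$ by a Dulac-type transversality argument.

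The main obstacle I anticipate is the explicit bookkeeping in the derivation of $Y$ and the verification that the planar criteria applied to $Y$ reduce precisely to the stated conditions on $\tilde{\mathcal K}_{1}$ and its gradient; this relies on the tangency identity $xP+yQ+zR=0$ together with Euler's identity for the homogeneous components of $X$, in parallel with the proof of Theorem \ref{the:11}. Cases (c,d) and (e,f) then follow from (a,b) by applying the cyclic permutations $(x,y,z)\mapsto(y,z,x)$ and $(x,y,z)\mapsto(x,z,y)$.
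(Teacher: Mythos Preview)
Your approach is correct in outline but differs from the paper's in its key mechanism. The paper does \emph{not} rotate to reduce to $(a,b,c)=(0,0,1)$; instead it works directly with the central projection onto the tangent plane at $(a,b,c)$ (systems \eqref{eq:134}, \eqref{eq:155}, \eqref{eq:156}) and exhibits an explicit invariant algebraic curve $f_1(u,v)=(u+a)^2+(v+b)^2+\bigl(c-(au+bv)/c\bigr)^2$ of the projected planar field $\bar X_1$, with cofactor $-2\tilde{\mathcal K}_1$. Since $\{f_1=0\}=\emptyset$ in $\mathbb R^2$, Theorem \ref{the:10} (the Giacomini--Llibre--Viano criterion) forces any periodic orbit along which $\int_\gamma\tilde{\mathcal K}_1\,dt\neq 0$ into this empty set, proving (a). For (b) the paper uses the same transversality observation you describe to confine any periodic orbit to a region where $\tilde{\mathcal K}_1$ has fixed sign, and then invokes the cofactor argument again.

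Your route---rotate, then observe that $z'$ is monotone along orbits when $R'$ has fixed sign on a hemisphere---is a more elementary substitute for the invariant-curve-plus-cofactor machinery, and it avoids Theorem \ref{the:10} entirely. What the paper's method buys is the structural corollary that $f_i=0$ is always an invariant algebraic curve of $\bar X_i$ with cofactor $-2\tilde{\mathcal K}_i$, which is recorded separately. One point to tighten in your sketch: for case (b), transversality alone does not exclude closed orbits; you must then feed the conclusion (that $\gamma$ avoids $\{\tilde{\mathcal K}_1=0\}$) back into your monotonicity argument for (a), exactly as the paper feeds it back into Theorem \ref{the:10}. Calling this step ``Dulac-type'' is a slight misnomer, since no divergence is involved.
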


Theorems \ref{the:11} and \ref{the:13} will be proved in Section
\ref{sec:09}.

The following theorem characterizes the phase portraits of
quadratic homogeneous polynomial vector fields on $\mathbb S^2$
having at least one non--hyperbolic singularity.

\begin{theorem}
\label{the:09} Let $X$ be a homogeneous polynomial vector field on
$\mathbb S^2$ of degree $2$.
\begin{itemize}
\item[(a)] If $X$ has a linearly zero singularity, then its phase
portrait is equivalent to the phase portrait of Figure
\ref{figura3}. \item[(b)] If $X$ has a nilpotent singularity, then
its phase portrait in the Poincar\'e disc is equivalent to one of
the phase portraits of Figures \ref{figura31} or \ref{figura32}.
\item[(c)] If $X$ has a center, then its phase portrait in the
Poincar\'e disc is equivalent to one of the phase portraits of
Figures \ref{figura33} or \ref{figura35}. \item[(d)] If $X$ has a
semi--hyperbolic singularity, then its phase portrait in the
Poincar\'e disc is equivalent to one of the phase portraits of
Figures \ref{figura36} or \ref{figura37}.
\end{itemize}
\end{theorem}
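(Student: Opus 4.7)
The overall plan is to fix a non-hyperbolic singularity $p$ of $X$ and, after a rotation of $\mathbb R^{3}$ that preserves the form \eqref{eq:04}, assume $p=(0,0,-1)$. Proposition \ref{pro:05} then forces $a_{3}=a_{6}=0$. A direct computation shows that the linearization of $X$ at $p$, restricted to $T_{p}\mathbb S^{2}$, equals
\[
J_{p}=\begin{pmatrix}-a_{4} & -a_{5}\\ -a_{7} & -a_{8}\end{pmatrix}.
\]
Since $P,Q,R$ are homogeneous of even degree $2$, one has $X(-q)=X(q)$, so the antipodal map carries orbits to orbits (reversing orientation); in particular $-p=(0,0,1)$ is also a singularity with linear part $J_{p}$, and the unoriented phase portrait on $\mathbb S^{2}$ is antipodally symmetric. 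The remaining singularities are located by solving $P=Q=R=0$ on the sphere, and the invariant great circles supplied by \cite{ref:15} partition $\mathbb S^{2}$ into regions on which the topological behaviour can be pinned down.

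For part (a), the assumption is $a_{4}=a_{5}=a_{7}=a_{8}=0$, leaving only $a_{1},a_{2}$. A rotation about the $z$-axis absorbs one of these, and the resulting one-parameter family can be integrated explicitly from \eqref{eq:04}; its portrait coincides with Figure \ref{figura3}. For part (c) I would invoke Theorem \ref{the:08}: its algebraic conditions yield an analytic first integral near $p$, hence a period annulus around $p$. Theorems \ref{the:11} and \ref{the:13} rule out limit cycles elsewhere, so the remaining (hyperbolic) singularities force the separatrix skeleton to be one of exactly two configurations, corresponding to Figures \ref{figura33} and \ref{figura35}.

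For part (b), nilpotence means $a_{8}=-a_{4}$, $a_{4}^{2}+a_{5}a_{7}=0$ with $J_{p}\neq 0$. A linear change in $T_{p}\mathbb S^{2}$ brings $J_{p}$ to Jordan form, after which the classical Andreev-type classification via (quasi-homogeneous) blow-ups fixes the local portrait at $p$; the remaining singularities are generically hyperbolic and are classified by their own linearizations. Assembling the local portraits using the antipodal symmetry, the invariant great circles, and the non-existence criteria of Theorems \ref{the:11}/\ref{the:13}, one recovers exactly Figures \ref{figura31} and \ref{figura32}. For part (d), semi-hyperbolicity says $\det J_{p}=0$ while $\mathrm{tr}\,J_{p}=-a_{4}-a_{8}\neq 0$. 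Diagonalizing $J_{p}$ and performing the standard center-manifold reduction distinguishes saddle-node, topological node and topological saddle behaviour at $p$; patching with the antipodal copy at $-p$ and with the hyperbolic singularities then produces Figures \ref{figura36} and \ref{figura37}.

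The main obstacle lies in cases (b) and (d): the blow-up or center-manifold analysis splits into several potential sub-cases according to higher-order coefficients, and one must verify that, despite this apparent multiplicity, only two topologically distinct global portraits actually occur. This requires careful bookkeeping of the sign conditions on the $a_{i}$, locating all global singularities together with their invariant manifolds, and excluding pathological separatrix connections. For the last point the non-existence criteria of Theorems \ref{the:11}--\ref{the:13}, combined with the finite list of invariant great circles classified in \cite{ref:15}, are the decisive tools that close the argument.
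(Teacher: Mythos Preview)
Your overall strategy matches the paper's: normalize the singularity to $(0,0,-1)$ so that $a_3=a_6=0$, split into cases according to the rank and trace/determinant of $J_p$, apply the Andronov et al.\ local classification theorems, and exclude limit cycles via Theorem~\ref{the:11}. However, there are two concrete devices the paper uses that your proposal does not identify, and without them the ``careful bookkeeping'' you allude to cannot actually be closed.

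First, the paper repeatedly uses further \emph{orthogonal} linear changes in $\mathbb R^{3}$ (elements of $SO(3)$), not merely linear changes in $T_p\mathbb S^{2}$, to collapse the apparent multiplicity of sub-cases into one or two normal forms. In part (b), for instance, the nilpotent condition a priori yields two families, \eqref{eq:132} with $a_4a_7\neq 0$ and \eqref{eq:133} with $a_4=a_7=0$, $a_5\neq 0$; an explicit $SO(3)$ rotation sends the first to the second, so only \eqref{eq:133} must be analyzed. More importantly, once the local type at $p$ is determined, the paper rotates so that a \emph{different} singularity (the node or focus) sits at $(0,0,-1)$; in the new coordinates the third component becomes $R=c\,\tilde x^{2}$, hence $\tilde R(u,v)=4cu^{2}$ has constant sign and Theorem~\ref{the:11}(a) applies directly. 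In your original coordinates $\tilde R$ is typically indefinite and neither criterion of Theorem~\ref{the:11} fires, so this rotation is the step that actually makes the non-existence argument work in parts (b) and (d).

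Second, in part (c), the sub-case ``center at $p$, strong foci on $\mathbb S^{1}$'' is not handled by Theorems~\ref{the:11} or \ref{the:13} at all. There $\tilde R=-4(a_5+a_7)uv$ changes sign, and the paper instead exhibits an explicit polynomial inverse integrating factor $V=(u^{2}+v^{2}+1)V_{2}(u,v)$ for the stereographically projected system \eqref{eq:141}, checks that $V_{2}$ vanishes only at the two foci, and concludes that a first integral exists on the punctured plane. In the complementary saddle/node sub-case the paper produces explicit invariant circles and invokes Proposition~\ref{cor:01} rather than Theorem~\ref{the:11}. Your sketch asserts that Theorems~\ref{the:11}/\ref{the:13} ``rule out limit cycles elsewhere'' in part (c), but as written that claim does not go through.
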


We note that the unique non--hyperbolic singularity which does not
appear explicitly in the statement of Theorem \ref{the:09} is a
weak focus. Such singularities can appear in the quadratic
homogeneous polynomial vector fields on $\mathbb S^2$ and such
vector fields are studied in Theorems \ref{the:12} and
\ref{the:14}. Hence, from Theorem \ref{the:09} it follows
immediately.

\begin{corollary}
\label{cor:02} Let $X$ be a homogeneous polynomial vector field on
$\mathbb S^2$ of degree $2$. If $X$ has at least a non--hyperbolic
singularity on $\mathbb S^2$ different from a weak focus, then $X$
has no limit cycles on $\mathbb S^2$.
\end{corollary}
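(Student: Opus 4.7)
My plan is to deduce the corollary as a direct corollary of the classification in Theorem \ref{the:09}, which is why the author phrases it as immediate. First I would record that on a two-dimensional manifold the only non-hyperbolic singularities of an analytic vector field are linearly zero, nilpotent, center, semi-hyperbolic, or weak focus. Since the hypothesis explicitly excludes weak foci, the distinguished non-hyperbolic singularity of $X$ must fall into one of the first four classes, placing $X$ in exactly one of the cases (a)--(d) of Theorem \ref{the:09}. Consequently the global phase portrait of $X$ (on $\mathbb S^2$ in case (a), and in the Poincar\'e disc in cases (b)--(d)) is topologically equivalent to one of the finitely many model portraits in Figures \ref{figura3}, \ref{figura31}, \ref{figura32}, \ref{figura33}, \ref{figura35}, \ref{figura36}, \ref{figura37}.

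Next I would invoke the fact that topological equivalence preserves the absence of limit cycles. This is because a homeomorphism of $\mathbb S^2$ sending orbits to orbits maps an isolated periodic orbit to an isolated periodic orbit; hence it suffices to inspect each of the listed model portraits and to verify that none contains an isolated closed trajectory. This inspection splits naturally into two qualitatively different situations. For cases (a), (b), and (d), the corresponding figures show portraits whose orbits are exhausted by singularities and separatrices, so no periodic orbit is present and the conclusion is trivial. For case (c), the figures exhibit a one-parameter family of periodic orbits surrounding the center (or each of a pair of centers on $\mathbb S^2$); since a limit cycle is by definition an \emph{isolated} periodic orbit and every orbit of this continuous family has periodic neighbours, none of them is a limit cycle.

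I expect no genuine obstacle here: the corollary is truly a short visual consequence of Theorem \ref{the:09}, and the substantive work lies entirely in proving that classification. The only delicate point is the preliminary observation that weak foci are the only non-hyperbolic type omitted from Theorem \ref{the:09}, together with the routine remark that topological equivalence carries limit cycles to limit cycles; once these are recorded, the proof reduces to a brief case-by-case inspection of the seven model phase portraits.
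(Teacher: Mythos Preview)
Your approach is correct and matches the paper's, which simply declares the corollary a straightforward consequence of the proof of Theorem~\ref{the:09}. The only nuance is that the paper invokes the \emph{proof} rather than the statement: in the case analysis of Theorem~\ref{the:09} the absence of limit cycles is already established directly for each vector field (via Theorem~\ref{the:11}, an invariant circle and Proposition~\ref{cor:01}, or an explicit first integral), so no separate appeal to topological equivalence or inspection of the figures is required.
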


Theorem \ref{the:09} and Corollary \ref{cor:02} will be proved in
Section \ref{sec:06}.

In the next theorem we give an upper bound for the number of
singularities of homogeneous polynomial vector fields on $\mathbb
S^2$ of arbitrary degree.

\begin{theorem}
\label{pro:12}
 Let $X=(P,Q,R)$ be a homogeneous polynomial vector field on
$\mathbb S^2$ of degree $n$. If $X$ has finitely many
singularities on $\mathbb S^2$, then $X$ has at most $2(n^2-n+1)$
singularities on $\mathbb S^2$.
\end{theorem}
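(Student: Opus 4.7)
Each singularity of $X$ on $\mathbb S^{2}$ has an antipodal mate, and each antipodal pair corresponds to a single point of the common zero set $V(P,Q,R)$ in the real projective plane $\mathbb R P^{2}\subset\mathbb C P^{2}$, so it is enough to show $|V(P,Q,R)|\leq n^{2}-n+1$ whenever this set is finite in $\mathbb C P^{2}$. The plan is to exploit the syzygy $xP+yQ+zR\equiv 0$ in two ways. First, at any $p=(x:y:z)\in V(P,Q)$ with $z\neq 0$ the syzygy forces $R(p)=0$; hence $V(P,Q)\cap\{z\neq 0\}\subseteq V(P,Q,R)$, and any point of $V(P,Q)\setminus V(R)$ must lie on the line $\{z=0\}$. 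Second, restricting the syzygy to $z=0$ gives $xP(x,y,0)+yQ(x,y,0)=0$, forcing $P(x,y,0)=-yA(x,y)$ and $Q(x,y,0)=xA(x,y)$ for a uniquely determined homogeneous $A\in\mathbb C[x,y]$ of degree $n-1$.

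I would first handle the case $\gcd(P,Q)=1$. B\'ezout's theorem then yields $\sum_{V(P,Q)}i_{p}(P,Q)=n^{2}$ in $\mathbb C P^{2}$. For $p\in V(P,Q)\cap\{z=0\}$ with $x(p)\neq 0$, multiplying the syzygy by the unit $1/x$ in $\mathcal O_{p}$ gives $P\equiv -zR\pmod{Q}$, so $(P,Q)_{p}=(Q,zR)_{p}$ and
$$i_{p}(P,Q)=i_{p}(Q,z)+i_{p}(Q,R)=\mathrm{mult}_{p}(A)+i_{p}(Q,R),$$
using $Q(x,y,0)=xA$. At the exceptional point $(0:1:0)$ I would instead use the twin identity $yQ\equiv -zR\pmod{P}$ to obtain $i_{p}(P,Q)=\mathrm{mult}_{p}(A)+i_{p}(P,R)$. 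In either case, at every $p\in V(P,Q)\cap\{z=0\}$ one has $i_{p}(P,Q)\geq\mathrm{mult}_{p}(A)$, with $i_{p}(P,Q)\geq\mathrm{mult}_{p}(A)+1$ when $R(p)=0$. Summing and using $\deg A=n-1$,
$$\sum_{V(P,Q)\cap\{z=0\}}i_{p}(P,Q)\geq(n-1)+|V(P,Q,R)\cap\{z=0\}|.$$
Subtracting from $n^{2}$ and combining with $|V(P,Q,R)\cap\{z\neq 0\}|\leq\sum_{V(P,Q)\cap\{z\neq 0\}}i_{p}(P,Q)$ yields $|V(P,Q,R)|\leq n^{2}-n+1$.

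In the remaining case, $P$ and $Q$ share a nontrivial common factor $g$. Finiteness of $V(P,Q,R)$ forces $\gcd(g,R)=1$, and the identity $g(x\widetilde P+y\widetilde Q)=-zR$ with $P=g\widetilde P$, $Q=g\widetilde Q$ makes every irreducible factor of $g$ divide $z$; a second pass of the same argument rules out $z^{2}\mid g$. Hence $g=z$ up to a scalar, so $\widetilde P,\widetilde Q$ have degree $n-1$, $R=-(x\widetilde P+y\widetilde Q)$, and any factor shared by $\widetilde P,\widetilde Q$ would also lie in $\gcd(P,Q,R)$ and must be trivial. Writing $V(P,Q,R)=V(z,R)\cup V(\widetilde P,\widetilde Q,R)$ and applying B\'ezout to each piece gives $|V(z,R)|\leq n$ and $|V(\widetilde P,\widetilde Q,R)|\leq|V(\widetilde P,\widetilde Q)|\leq(n-1)^{2}$, yielding again $|V(P,Q,R)|\leq n+(n-1)^{2}=n^{2}-n+1$. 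The analogous situations where $(P,R)$ or $(Q,R)$ share a factor but $(P,Q)$ do not are already subsumed by the coprime case. Doubling for antipodal pairs delivers the asserted bound $2(n^{2}-n+1)$ on $\mathbb S^{2}$.

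The hard part will be the local intersection-multiplicity identity at points of $\{z=0\}$: the relation $(P,Q)_{p}=(Q,zR)_{p}$ is valid only where $x$ is a unit, so the exceptional point $(0:1:0)$ must be handled separately via the symmetric identity with $y$ in place of $x$. A conceptually slicker alternative that sidesteps this case analysis interprets such $(P,Q,R)$ as a global section of the rank two bundle $\Omega^{1}_{\mathbb P^{2}}(n+1)$ defined by the twisted dual Euler sequence
$$0\longrightarrow\Omega^{1}_{\mathbb P^{2}}(n+1)\longrightarrow\mathcal O(n)^{\oplus 3}\longrightarrow\mathcal O(n+1)\longrightarrow 0,$$
whose second Chern class computes to $n^{2}-n+1$, delivering the bound directly.
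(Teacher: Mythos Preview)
Your argument is correct and arrives at the same bound, but by a genuinely different route from the paper. The paper does not compute anything directly: it observes that the condition $xP+yQ+zR=0$ makes $\omega=P\,dX+Q\,dY+R\,dZ$ a projective (Pfaff) $1$--form on $\mathbb{CP}^2$, then invokes the classical Darboux Proposition (via the decomposition $P=ZM-YN$, $Q=XN-ZL$, $R=YL-XM$ with $\deg L=\deg M=\deg N=n-1$, and the Darboux Lemma on the identity $X(ZM-YN)+Y(XN-ZL)+Z(YL-XM)\equiv 0$) to conclude that the singular locus has total intersection number $(n-1)^2+(n-1)+1=n^2-n+1$. Doubling for antipodal pairs finishes. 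Your approach instead stays with $P,Q,R$ and the syzygy, applies B\'ezout to $V(P,Q)$, and extracts the correction $n-1$ from the factor $A$ of degree $n-1$ appearing in $P(x,y,0)=-yA$, $Q(x,y,0)=xA$; the case $\gcd(P,Q)\neq 1$ you reduce cleanly to $g=z$. This is more hands--on than the paper's citation of Darboux but entirely self--contained, needing only the standard properties of the intersection number. Your closing remark identifying $(P,Q,R)$ with a section of $\Omega^1_{\mathbb{P}^2}(n+1)$ and reading off $c_2=n^2-n+1$ is the cleanest conceptual explanation and is not in the paper; it recovers the Darboux count without the $L,M,N$ machinery. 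One caveat that applies equally to the paper and to you: the hypothesis is finiteness of the \emph{real} singular set on $\mathbb S^2$, while both arguments tacitly use finiteness in $\mathbb{CP}^2$ (equivalently, $\gcd(P,Q,R)=1$ over $\mathbb C$); neither proof addresses this passage explicitly.
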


Theorem \ref{pro:12} will be proved in Section \ref{sec:11}.

The following theorem characterize modulo limit cycles the phase
portraits of quadratic homogeneous polynomial vector fields on
$\mathbb S^2$ having non--degenerate singularities.

\begin{theorem}
\label{the:12} Let $X$ be a homogenous polynomial vector field on
$\mathbb S^2$ of degree $2$. If all singularities of $X$ are
non--degenerate, then its phase portrait in the Poincar\'e disc is
topologically equivalent modulo limit cycles to one of the phase
portraits giving in Figures \ref{figura41}.
\end{theorem}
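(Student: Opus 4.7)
The plan is to enumerate all possible non--degenerate configurations of singularities on $\mathbb{S}^2$ and then, for each configuration, identify the phase portrait in the Poincar\'e disc up to topological equivalence modulo limit cycles. First I will bound the total number of singularities using Theorem \ref{pro:12} with $n=2$, which gives at most $6$. Since $X$ is homogeneous of degree $2$, $X(-p)=X(p)$ for every $p\in\mathbb{R}^3$, so the singular set on $\mathbb{S}^2$ is invariant under the antipodal map, and a direct computation using \eqref{eq:04} shows that the linearizations at antipodal singularities differ by a factor $-1$. Hence antipodal partners share a common determinant and thus a common index $\pm 1$ (only the trace, i.e. the stability, is reversed). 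Writing $k$ for the number of antipodal pairs of index $+1$ and $m$ for the number of saddle pairs, Poincar\'e--Hopf gives $2(k-m)=\chi(\mathbb{S}^2)=2$, while the singularity bound gives $k+m\le 3$. The only solutions are $(k,m)=(1,0)$ (two singularities) and $(k,m)=(2,1)$ (six singularities).

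In the two--singularity case the antipodal pair is of common type: center, weak focus, strong focus, or node. The center subcase is already handled by Theorem \ref{the:09}(c). In the three remaining subcases, Poincar\'e--Bendixson on $\mathbb{S}^2$ forces every non--stationary orbit to connect the source to the sink (possibly wrapping around limit cycles), so the topological type modulo limit cycles is uniquely determined in each. In the six--singularity case I will exploit the antipodal symmetry to reduce the problem to determining the $\alpha$-- and $\omega$--limits of the four separatrices emanating from one saddle $S$; the picture at $-S$ is then forced. Each such limit is either a singularity or a periodic orbit, and the latter is absorbed into the limit--cycle ambiguity. I will enumerate the a priori possibilities and then prune the list by invoking (i) the classification of invariant circles in \cite{ref:15}, which act as invariant barriers and forbid certain separatrix crossings, (ii) the local portraits near each non--saddle singularity, and (iii) the enforced antipodal symmetry (so that, for instance, a homoclinic loop at $S$ must be mirrored at $-S$). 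The surviving configurations are exactly the list in Figures \ref{figura41}.

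The principal technical obstacle is the third step. The separatrix combinatorics admit many a priori patterns, and for each one I must either produce an explicit realization in the family \eqref{eq:04} or rule it out by a global obstruction; verifying that the exhibited list is both complete and non--redundant requires a careful case--by--case check. The antipodal symmetry of $X$ and the invariant--circle machinery of \cite{ref:15} are the essential tools that keep the enumeration finite and, ultimately, tractable.
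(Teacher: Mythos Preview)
Your counting of singularities via the antipodal symmetry and Poincar\'e--Hopf is correct and matches the paper's conclusion (either one antipodal pair or three, with exactly one saddle pair in the latter case). Your treatment of the two--singularity case is also fine modulo limit cycles, though the paper goes further: it writes $X=(L,M,N)\wedge(x,y,z)$, observes that the singularities are precisely the real eigenvector lines of the $3\times 3$ matrix $A$ of \eqref{eq:130}, and in the two--singularity case shows $\tilde R$ has constant sign so that Theorem~\ref{the:11}(a) rules out limit cycles outright.

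The genuine gap is in your six--singularity analysis. Your pruning tools (i)--(iii) are not enough to exclude all impossible separatrix configurations. Concretely, the ``figure--eight'' configuration of Figure~\ref{figura39}(c) (and its degeneration~(b))---a saddle $S$ with two homoclinic loops, each enclosing one of the two non--saddle singularities in its hemisphere---is antipodally symmetric, contains no heteroclinic connection (so Proposition~\ref{pro:06} and the invariant--circle machinery of \cite{ref:15} do not bite), and is compatible with every local portrait. Nothing in (i), (ii), or (iii) rules it out. The paper's key extra ingredient is Proposition~\ref{pro:14}: for a degree--$2$ field any great circle either is invariant or has at most four tangencies with the orbits of $X$. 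By choosing a great circle that threads between the saddle and the two non--saddle singularities, one forces (by antipodal symmetry) at least six tangencies with the homoclinic loops, contradicting the four--tangency bound. This is a specifically degree--$2$ geometric obstruction, and you need it (or an equivalent convexity statement such as Proposition~\ref{pro:17}) to complete the enumeration.

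A secondary difference worth noting: rather than enumerating separatrix combinatorics abstractly, the paper uses the eigenvector description of singularities together with orthogonal changes of coordinates to reduce every six--singularity system to one of three explicit normal--form families \eqref{eq:142}, \eqref{eq:143}, \eqref{eq:144}. This buys concrete realizations of each phase portrait in Figure~\ref{figura41} and makes the exclusion arguments (including the tangency count) tractable. Your purely topological route can be made to work, but only after you add the four--tangency obstruction to your toolkit.
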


Theorem \ref{the:12} will be proved in Section \ref{sec:07}.

The next result characterizes the families of quadratic
homogeneous polynomial vector fields on $\mathbb S^2$ having a
Hopf bifurcation.

\begin{theorem}
\label{the:14} Let $X$ be a family of homogeneous polynomial
vector fields on $\mathbb S^2$ of degree $2$ having a Hopf
bifurcation. Then, doing a orthogonal linear change of variables,
the system associated to $X$ can be written as
\[
\begin{array}{lcl}
\dot{x} & = & a_1xy+a_2y^2+a_5yz, \\
\dot{y} & = & -a_1x^2-a_2xy+a_7xz+a_8yz, \\
\dot{z} & = & -(a_5+a_7)xy-a_8y^2,
\end{array}
\]
with $a_1\neq 0$, $a_2(a_5+a_7)-a_1a_8\neq 0$ and $-a_5a_7<0$.
\end{theorem}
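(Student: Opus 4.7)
The plan is to combine orthogonal reductions on the sphere with the center characterization of Theorem~\ref{the:08} and the standard theory of Hopf bifurcation. A Hopf bifurcation in a family $X_\mu$ requires, at some parameter value $\mu_0$, a singular point $p_0$ whose linearization on $\mathbb S^2$ has a simple pair of pure imaginary eigenvalues, transversal crossing of the real parts in $\mu$, and nonzero first Lyapunov coefficient. Since $O(3)$ acts on the space of quadratic homogeneous polynomial vector fields on $\mathbb S^2$ preserving the form supplied by Proposition~\ref{pro:05}, I would first apply a rotation of $\mathbb R^3$ carrying $p_0$ to $(0,0,-1)$. The singularity condition at $p_0$ then forces $a_3=a_6=0$, placing us in the exact setting of Theorem~\ref{the:08}.

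In tangent coordinates on $\mathbb S^2$ at $(0,0,-1)$, the linear part of the vector field reads
\[
A=\begin{pmatrix} -a_4 & -a_5 \\ -a_7 & -a_8 \end{pmatrix},
\]
and the pure-imaginary condition is precisely the first two conditions of Theorem~\ref{the:08}: $a_8=-a_4$ and $a_4^2+a_5a_7<0$. The subgroup of $O(3)$ fixing $(0,0,-1)$ acts on the tangent plane by planar rotations $R_\theta$, conjugating $A$ while preserving the form of Proposition~\ref{pro:05}. A short computation shows that the angle $\theta$ can be chosen so that the new $(1,1)$-entry of $R_\theta A R_\theta^{-1}$ vanishes, specifically $\tan 2\theta = 2a_4/(a_5+a_7)$ (and $\theta=\pi/4$ if $a_5+a_7=0$). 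After this rotation the new $a_4$ equals zero, hence also the new $a_8=-a_4$ at the bifurcation value, and the $\mu_0$-system acquires exactly the form displayed in the theorem with $a_8=0$; the invariance of the determinant under conjugation then yields the stated sign condition on $a_5a_7$.

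Finally I would treat $a_8$ as the bifurcation parameter, so that the trace of the linearization, namely $-a_8$, provides automatic transversality of the eigenvalue crossing. For $p_0$ to be a weak focus rather than a center, the third condition of Theorem~\ref{the:08}, which with $a_4=0$ reads $a_1a_2(a_5+a_7)=0$, must \emph{fail}; this already forces $a_1\neq 0$. The condition $a_2(a_5+a_7)-a_1a_8\neq 0$ should then emerge as the nondegeneracy of the first focal value for the \emph{entire} family rather than only at $\mu_0$. I expect this to be the main obstacle: one must compute the first Lyapunov coefficient of the family with $a_8$ left free, by normalising the linear part to $\bigl(\begin{smallmatrix} 0 & -\omega \\ \omega & 0 \end{smallmatrix}\bigr)$ through an $a_8$-dependent linear change of coordinates and reading off the first resonant coefficient in the Poincar\'e normal form, in the same spirit as the proof of Theorem~\ref{the:08}. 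The resulting polynomial should, up to a nonzero factor, be exactly $a_2(a_5+a_7)-a_1a_8$, and requiring it to be nonvanishing gives the last hypothesis of the theorem. Assembling the three conditions produces the stated characterization.
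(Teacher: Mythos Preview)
Your approach has a basic misidentification: in the normal form stated in the theorem the point $(0,0,-1)$ is a \emph{saddle}, not the weak focus. Indeed, with $a_3=a_4=a_6=0$ the linear part at $(0,0,-1)$ has determinant $-a_5a_7$, and the hypothesis $-a_5a_7<0$ makes this negative. If you rotate the weak focus to $(0,0,-1)$ and then kill $a_4$ by a planar rotation as you describe, the center-type condition $a_4^2+a_5a_7<0$ forces $a_5a_7<0$, the opposite sign. So your normalization cannot produce the displayed system, and your claim that ``the $\mu_0$-system acquires exactly the form displayed in the theorem'' is false as stated.

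The second gap is the meaning of $a_2(a_5+a_7)-a_1a_8\neq 0$. This is not a focal-value condition at all. In the paper's argument the saddle is placed at $(0,0,-1)$ and the remaining singularities are located as eigenvectors of the $3\times3$ matrix $A$ built from $(L,M,N)$; the analysis splits into several cases and reduces, after orthogonal changes, to exactly two four-parameter families, labelled \eqref{eq:142} and \eqref{eq:144}. The inequality $a_2(a_5+a_7)-a_1a_8\neq 0$ (together with $a_1\neq 0$) is precisely what singles out family \eqref{eq:142} in that reduction; it has nothing to do with the first Lyapunov coefficient, whose vanishing at the weak focus is governed instead by the expression $a_4(a_1^2-a_2^2)+a_1a_2(a_5+a_7)$ from Theorem~\ref{the:08} (in suitable coordinates). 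The paper then finishes by showing that every singularity of family \eqref{eq:144} is a saddle, a node, a strong focus, or a center, so no Hopf bifurcation can occur there, while in family \eqref{eq:142} one of the non-saddle singularities is a genuine weak focus for an explicit parameter value and the transversality is checked directly. Your proposal skips this dichotomy entirely and therefore never explains why the stated three conditions, rather than some other set, characterize the Hopf families.
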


Theorem \ref{the:14} will be proved in Section \ref{sec:12}.

We have reduced the possible existence of limit cycles to two
families; more precisely, the families \eqref{eq:142}  and
\eqref{eq:144}. In Theorem \ref{the:14} we have proved that family
\eqref{eq:144} has for some values of the parameters limit cycles.
Finally in the last section we conjecture that family
\eqref{eq:142} has no limit cycles.

\section{Poincar\'e disc}
\label{sec:05}

Let $X$ be a homogeneous polynomial vector field on $\mathbb S^2$,
then the differential system associated to it is invariant with
respect to the change of variables $(x,y,z,t)\mapsto
(-x,-y,-z,-t)$ if its degree is even, or with respect to
$(x,y,z,t)\mapsto (-x,-y,-z,t)$ if its degree is odd. Thus, in
particular the phase portrait of $X$ at the northern hemisphere of
$\mathbb S^2$ is symmetric with respect to the origin of the
sphere to the phase portrait at the southern hemisphere with the
time reverse if the degree of $X$ is even, or with the same time
if the degree of $X$ is odd. We now project the northern
hemisphere of $\mathbb S^2$ orthogonally onto the plane $\Pi$
containing the equator of $\mathbb S^2$, i.e. $\mathbb S^1$. The
orbits of $X$ on the northern hemisphere of $\mathbb S^2$ are
mapped onto certain curves of the unit disc of $\Pi$. We call this
unit disc, together with the corresponding induced phase portrait,
the {\it Poincar\'e disc}.

Now we consider the homogeneous polynomial vector field
$X=(P,Q,R)$ of degree $m$. We identify $\mathbb R^2$ as the
tangent plane to the sphere $\mathbb S^2$ at the point
$p=(a,b,c)$, i.e with the plane $ax+by+cz-1=0$, where
$a^2+b^2+c^2=1$. Suppose that $c\neq 0$, then we denote the points
of $\mathbb R^2$ as $(u+a,v+b,c-(au+bv)/c)$. Let $\pi:\mathbb
R^2\rightarrow \mathbb S^2\cap\{ax+by+cz> 0\}$ be the
diffeomorphism given by $\pi
(u,v)=|c|/\sqrt{\lambda}\left(x=u+a,\; y=v+b,\;
z=c-(au+bv)/c\right)$, where $\lambda=c^2(1+u^2+v^2)+(au+bv)^2$.
That is, $\pi$ is the inverse map of the {\it central projection}
$\pi^{-1}:\mathbb S^2\cap\{ax+by+cz> 0\}\rightarrow \mathbb R^2$
defined by
\begin{equation}
\label{eq:82} \pi^{-1} (x,y,z)=\left(u=\frac{x}{ax+by+cz},\;
v=\frac{y}{ax+by+cz},\frac{z}{ax+by+cz} \right).
\end{equation}
The homogeneous polynomial system $X$ on $\mathbb S^2$ becomes,
through the central projection $\pi^{-1}$, the differential system
\[
\begin{array}{lcl}
\dot{u} & = &\displaystyle
\frac{\sqrt{\lambda}}{|c|}(\bar{P}-(u+a)(a\bar{P}+b\bar{Q}+c\bar{R})),
\\
\dot{v} & = & \displaystyle
\frac{\sqrt{\lambda}}{|c|}(\bar{Q}-(v+b)(a\bar{P}+b\bar{Q}+c\bar{R})),
\end{array}
\]
on the plane $\mathbb R^{2}$. Here $\displaystyle
\bar{F}=F\left(\pi (u,v)\right)$. Since $X$ is a homogeneous
polynomial vector field of degree $m$ we have that
\[
\begin{array}{lcl} \dot{u} & = & \displaystyle \left(\frac{|c|}{\sqrt{\lambda}}\right)^{m-1}\left(\tilde{P}-(u+a)(a\tilde{P}+b\tilde{Q}+
c\tilde{R})\right), \\
\dot{v} & = & \displaystyle
\left(\frac{|c|}{\sqrt{\lambda}}\right)^{m-1}\left(\tilde{Q}-(v+b)(a\tilde{P}+b\tilde{Q}+
c\tilde{R})\right),
\end {array}
\]
where $\tilde{F}=F(u+a,v+b,c-(au+bv)/c)$. If $t$ denotes the
independent variable in the above differential system, then this
system becomes polynomial introducing the new independent variable
$s$ through $ds=(\sqrt{\lambda}/|c|)^{1-m}dt$, i.e.
\begin{equation}
\label{eq:134}
\begin{array}{lcl} \dot{u} & = & \mathcal P(u,v)=\displaystyle \tilde{P}-(u+a)(a\tilde{P}+b\tilde{Q}+
c\tilde{R}), \\
\dot{v} & = & \mathcal Q(u,v)=\displaystyle
\tilde{Q}-(v+b)(a\tilde{P}+b\tilde{Q}+ c\tilde{R}).
\end {array}
\end{equation}

Now in the case $a\neq 0$ the central projection (\ref{eq:82})
induces a polynomial vector field on the plane $ax+by+cz-1=0$
determined by
\begin{equation}
\label{eq:155}
\begin{array}{lcl} \dot{u} & = & \mathcal P(u,v)=\displaystyle \tilde{Q}-(u+b)(a\tilde{P}+b\tilde{Q}+
c\tilde{R}), \\
\dot{v} & = & \mathcal Q(u,v)=\displaystyle
\tilde{R}-(v+c)(a\tilde{P}+b\tilde{Q}+ c\tilde{R}),
\end {array}
\end{equation}
where $\tilde{F}=F(a-(bu+cv)/a,u+b,v+c)$; and in the case $b\neq
0$ we obtain
\begin{equation}
\label{eq:156}
\begin{array}{lcl} \dot{u} & = & \mathcal P(u,v)=\displaystyle \tilde{P}-(u+a)(a\tilde{P}+b\tilde{Q}+
c\tilde{R}), \\
\dot{v} & = & \mathcal Q(u,v)=\displaystyle
\tilde{R}-(v+c)(a\tilde{P}+b\tilde{Q}+ c\tilde{R}),
\end {array}
\end{equation}
where $\tilde{F}=F(u+a,b-(au+cv)/b,v+c)$.

The proof of the next two propositions can be found in
\cite{ref:18}.
\begin{proposition}
Let $X=(P,Q,R)$ be a homogeneous polynomial vector field of degree
$m$ on $\mathbb S^2$. Then the planar vector field induced from
$X$ through the central projection \eqref{eq:82} on the plane
$ax+by+cz-1=0$ with $a^2+b^2+c^2=1$ has degree $m$ if and only if
the great circle $\{ax+by+cz=0\}\cap \mathbb S^2$ is an invariant
circle of $X$.
\end{proposition}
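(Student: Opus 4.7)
The plan is to locate the source of the degree $m+1$ terms in the induced planar system \eqref{eq:134} and to show that they cancel exactly when the great circle is invariant. Since $\tilde{P}, \tilde{Q}, \tilde{R}$ are polynomials in $(u,v)$ of degree at most $m$, inspection of \eqref{eq:134} shows that $\mathcal{P}$ and $\mathcal{Q}$ have degree at most $m+1$, and any degree $m+1$ contribution arises from the factors $(u+a)$ or $(v+b)$ multiplied into the degree-$m$ part of $a\tilde{P}+b\tilde{Q}+c\tilde{R}$.

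First I would compute this leading part explicitly. For $F\in\mathbb R[x,y,z]$ homogeneous of degree $m$, the substitution $x=u+a$, $y=v+b$, $z=c-(au+bv)/c$ produces a polynomial in $(u,v)$ whose degree-$m$ homogeneous component is $F(u,v,-(au+bv)/c)$, obtained by retaining only the linear-in-$(u,v)$ part of each coordinate. Consequently the degree-$m$ part of $a\tilde{P}+b\tilde{Q}+c\tilde{R}$ is
\[ G(u,v) \;:=\; aP(u,v,w)+bQ(u,v,w)+cR(u,v,w), \qquad w=-\frac{au+bv}{c}, \]
and from \eqref{eq:134} the degree $m+1$ parts of $\mathcal{P}$ and $\mathcal{Q}$ are $-uG$ and $-vG$ respectively. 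These vanish simultaneously iff $G\equiv 0$, which is therefore the precise condition for the induced system to have degree at most $m$.

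The geometric step identifies $G\equiv 0$ with invariance of the great circle. Since $(u,v)\mapsto (u,v,-(au+bv)/c)$ is a linear bijection from $\mathbb R^2$ onto the plane $\Pi=\{ax+by+cz=0\}$, vanishing of $G$ is the same as vanishing of $aP+bQ+cR$ on $\Pi$; and because $aP+bQ+cR$ is homogeneous of degree $m$ while $\Pi$ is a union of rays through the origin, this is further equivalent to vanishing on the great circle $C=\Pi\cap\mathbb S^2$. Taking $f=ax+by+cz$ so that $Xf=aP+bQ+cR$, and noting that $\{f=0\}$ meets $\mathbb S^2$ transversally (the gradient $(a,b,c)$ is nonzero and is perpendicular, hence not parallel, to every point of $C$), invariance of $C$ reduces exactly to $Xf=0$ on $C$.

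This settles the chart $c\neq 0$; the cases $a\neq 0$ and $b\neq 0$ are identical using \eqref{eq:155} and \eqref{eq:156} after relabelling coordinates. The main obstacle I anticipate is the converse half of the last equivalence, namely producing a polynomial cofactor $K$ with $Xf=Kf$ from the mere vanishing of $aP+bQ+cR$ on $C$; this is handled by combining homogeneity (to upgrade vanishing on $C$ to vanishing on all of $\Pi$) with irreducibility of the linear form $f$ in $\mathbb R[x,y,z]$ (to obtain polynomial divisibility), which then yields $K$ homogeneous of degree $m-1$ and the desired identity on all of $\mathbb R^3$.
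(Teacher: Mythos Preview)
Your argument is correct. The paper itself does not supply a proof of this proposition; it simply refers the reader to \cite{ref:18} (Pessoa's thesis), so there is no in-text proof to compare against. Your approach---isolating the top-degree part of $a\tilde P+b\tilde Q+c\tilde R$ via the leading linear terms of the substitution, recognising it as $(aP+bQ+cR)$ evaluated on the plane $ax+by+cz=0$, and then using irreducibility of the linear form $f=ax+by+cz$ to pass from vanishing on $\{f=0\}$ to divisibility $Xf=Kf$---is the natural one and is carried out cleanly. One small remark: your computation literally shows that the induced field has degree \emph{at most} $m$ iff the great circle is invariant, which is the substantive content; the paper's phrasing ``has degree $m$'' should be read in this sense.
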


In general we will consider the case $(a,b,c)=(0,0,-1)$. Thus
(\ref{eq:134}) becomes
\begin{equation}
\label{eq:111}
\begin{array}{lcl} \dot{u} & = & \mathcal P(u,v)=\displaystyle P(u,v,-1)
+uR(u,v,-1), \\
\dot{v} & = & \mathcal Q(u,v)=\displaystyle  Q(u,v,-1)
+vR(u,v,-1).
\end {array}
\end{equation}
In particular if $X$ is a homogeneous polynomial  vector field on
$\mathbb S^2$ of degree $2$ associated to (\ref{eq:04}), then
(\ref{eq:111}) becomes
\begin{equation}
\label{eq:103}
\begin{array}{lcl} \dot{u} & = &
a_{3}-a_{4}u-a_{5}v+a_{3}u^2+(a_{1}+a_{6})uv+a_{2}v^2 \\ & &
-a_{4}u^3-(a_{5}+a_{7})u^2v-a_{8}uv^2, \\
\dot{v} & = &
a_{6}-a_{7}u-a_{8}v-a_{1}u^2+(a_{3}-a_{2})uv+a_{6}v^2- \\ & &
a_{4}u^2v-(a_{5}+a_{7})uv^2-a_{8}v^3.
\end {array}
\end{equation}

\begin{proposition}
\label{pro:08} Let $X$ be a homogeneous polynomial vector field on
$\mathbb S^2$ of degree $m$, and let $f\in \mathbb R [x,y,z]$ be a
homogeneous polynomial of degree $n$ such that $f(x,y,0)\not\equiv
0$. Then $f=0$ is an invariant algebraic surface of $X$ if and
only if $f(u,v,-1)=0$ is an invariant algebraic curve of the
polynomial planar vector field given by \eqref{eq:111} induced
from $X$ through the central projection \eqref{eq:82}.
\end{proposition}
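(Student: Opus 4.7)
The plan is to relate the Lie derivative of $f$ along $X$ in $\mathbb R^3$ to the Lie derivative of $\tilde f(u,v) := f(u,v,-1)$ along the planar vector field $(\mathcal P, \mathcal Q)$ given in \eqref{eq:111}, via a single algebraic identity. Both directions of the proposition will then follow from this identity, with the converse requiring an additional homogenization step.

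First I would substitute $\mathcal P = P(u,v,-1) + uR(u,v,-1)$ and $\mathcal Q = Q(u,v,-1) + vR(u,v,-1)$ into $\mathcal P\,\tilde f_u + \mathcal Q\,\tilde f_v$, using $\tilde f_u = f_x(u,v,-1)$ and $\tilde f_v = f_y(u,v,-1)$. The crucial simplification is Euler's identity $xf_x+yf_y+zf_z = nf$, applied at $z=-1$, which reads $uf_x(u,v,-1)+vf_y(u,v,-1) = n\tilde f(u,v) + f_z(u,v,-1)$. Substituting collapses the expression into
\[
\mathcal P\,\tilde f_u + \mathcal Q\,\tilde f_v \;=\; (Xf)(u,v,-1) \;+\; nR(u,v,-1)\,\tilde f(u,v).
\]
The forward direction is then immediate: if $Xf = Kf$ in $\mathbb R[x,y,z]$ (with $K$ necessarily homogeneous of degree $m-1$), specializing at $z=-1$ yields $\mathcal P\,\tilde f_u + \mathcal Q\,\tilde f_v = (K(u,v,-1)+nR(u,v,-1))\,\tilde f$, exhibiting $\tilde f=0$ as an invariant algebraic curve of the planar system with cofactor $K(u,v,-1)+nR(u,v,-1)$.

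For the converse, suppose $\mathcal P\,\tilde f_u + \mathcal Q\,\tilde f_v = \tilde K\,\tilde f$ for some $\tilde K\in\mathbb R[u,v]$. The displayed identity then forces $\tilde f$ to divide $(Xf)(u,v,-1)$ in $\mathbb R[u,v]$, with quotient $A(u,v) := \tilde K(u,v) - nR(u,v,-1)$. Here the hypothesis $f(x,y,0)\not\equiv 0$ is essential: it guarantees $\deg \tilde f = n$, and since $\deg (Xf)(u,v,-1) \le m+n-1$, we obtain $\deg A \le m-1$. I would then homogenize $A$ to degree $m-1$ using $-z$ as the homogenizing variable, obtaining a homogeneous polynomial $K\in\mathbb R[x,y,z]$ of degree $m-1$ with $K(u,v,-1) = A(u,v)$. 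Finally, $h := Xf - Kf$ is homogeneous of degree $m+n-1$ and vanishes identically on the plane $z=-1$; by homogeneity $h(tu,tv,-t) = t^{m+n-1}h(u,v,-1) = 0$ for all $t\ne 0$, so $h$ vanishes on $\{z\ne 0\}$ and hence identically on $\mathbb R^3$, giving $Xf = Kf$.

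The only delicate point is this last homogenization/lifting step. The hypothesis $f(x,y,0)\not\equiv 0$ is needed at exactly one place, namely to control $\deg \tilde f = n$ so that the quotient $A$ lifts to a polynomial cofactor of the expected degree; without it, $A$ could have degree exceeding $m-1$ and the natural lift $K$ would fail to be a polynomial in $\mathbb R[x,y,z]$.
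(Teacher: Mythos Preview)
The paper does not actually present a proof of this proposition; it refers the reader to \cite{ref:18} for the argument. So there is no in-paper proof to compare against.

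That said, your argument is correct and is the natural one. The key algebraic identity
\[
\mathcal P\,\tilde f_u + \mathcal Q\,\tilde f_v \;=\; (Xf)(u,v,-1) \;+\; nR(u,v,-1)\,\tilde f(u,v),
\]
obtained from Euler's relation, cleanly encodes both directions. The forward implication is immediate. For the converse, your use of the hypothesis $f(x,y,0)\not\equiv 0$ to force $\deg\tilde f = n$, hence $\deg A \le m-1$, is exactly the point where that hypothesis is needed; the subsequent homogenization of $A$ by $-z$ and the vanishing argument for $h = Xf - Kf$ via homogeneity are standard and correctly executed. One tiny remark: you might note explicitly that when $Xf = Kf$ with $X$ homogeneous of degree $m$ and $f$ homogeneous of degree $n$, comparing homogeneous components forces $K$ to be homogeneous of degree $m-1$, which you assert but do not justify.
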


The next results jointly with their respective proofs can be found
in Camacho \cite{ref:2} or in \cite{ref:18}.

\begin{proposition}
\label{pro:06} Let $X$ be the vector field associated to
\eqref{eq:04}, and let $s_{1}$, $-s_{1}\in \mathbb S^2$ be saddle
points of $X$ with a common separatrix $l$, then $l$ is contained
in a great circle passing through $s_{1}$ and $-s_{1}$.
\end{proposition}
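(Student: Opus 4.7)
The plan is to normalize coordinates so that $s_1$ sits at the north pole, and then use a quadratic local expansion of $l$ at both poles combined with a global sign obstruction. By an orthogonal linear change of variables in $\mathbb R^3$ (which preserves both the sphere and the form \eqref{eq:04}) I assume $s_1=(0,0,1)$, so that $-s_1=(0,0,-1)$; both being singularities gives $a_3=a_6=0$. The linearization of $X$ at $s_1$ in $T_{s_1}\mathbb S^2\cong\{\partial_x,\partial_y\}$ is $M=\begin{pmatrix}a_4&a_5\\a_7&a_8\end{pmatrix}$, with $\det M<0$ by the saddle hypothesis; the linearization at $-s_1$ is $-M$, so the two saddles share the same eigenvectors with opposite-sign eigenvalues. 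Thus the tangent direction of $l$ at each pole is either $d_1$ (unstable at $s_1$ = stable at $-s_1$) or $d_2$ (stable at $s_1$ = unstable at $-s_1$). A rotation about the $z$-axis aligns $d_1$ with the $x$-axis, giving $a_7=0$ and making $M$ upper triangular with $a_4>0>a_8$.

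Suppose first that $l$ is tangent to the $x$-axis at each pole. Parametrize $W^u(s_1)$ near $s_1$ by $y=h(x)$ with $h(0)=h'(0)=0$. Substituting $y=h(x)$, $z=\sqrt{1-x^2-y^2}$, and $\dot y=h'(x)\dot x$ into the system \eqref{eq:04} and equating leading-order $x^2$ terms gives
\[
h(x)=\frac{a_1}{a_8-2a_4}\,x^2+O(x^3),
\]
and the analogous expansion for the $x$-axis branch of $W^s(-s_1)$ yields $\tilde h(x)=\frac{a_1}{2a_4-a_8}\,x^2+O(x^3)$. Because $a_4>0>a_8$, the denominators have opposite signs, so when $a_1\ne 0$ the orbit $l$ has $y$ of one sign near $s_1$ and of the opposite sign near $-s_1$ (independent of which $\pm x$ branch is taken, as the leading terms are even in $x$). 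By continuity $l$ must then cross $\{y=0\}\cap\mathbb S^2$ at an interior point where $x\ne 0$; but there $\dot y=-a_1x^2$ has a fixed nonzero sign, which is incompatible with the required direction of the crossing, and rules out tangential crossings too. This contradiction forces $a_1=0$, after which $\dot y\equiv 0$ on $\{y=0\}\cap\mathbb S^2$, making this great circle invariant; the 1D flow it carries has $s_1$ as a source, so by uniqueness of the unstable manifold tangent to the $x$-axis, $W^u(s_1)$ is contained in $\{y=0\}\cap\mathbb S^2$, and hence so is $l$.

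If instead $l$ is tangent to $d_2$ at each pole, I apply the symmetric argument using a rotation that aligns $d_2$ with the $y$-axis (giving $a_5=0$ in place of $a_7=0$) and with the roles of $x$ and $y$ interchanged; that forces $a_2=0$ and places $l$ in the invariant great circle $\{x=0\}\cap\mathbb S^2$. In either case $l$ is contained in a great circle through $s_1$ and $-s_1$, as claimed. The main delicate step is the sign-mismatch argument of the second paragraph: one has to verify that both $\pm x$ branches of $W^u(s_1)$ share the same $y$-sign near $s_1$ (they do, by the quadratic leading term), that any crossing of $\{y=0\}$ is transversal (it is, since $\dot y\ne 0$ there when $a_1\ne 0$ and $x\ne 0$), and that the fixed sign of $\dot y|_{y=0}$ rules out the sign flip along $l$ forced by the local expansions—together yielding $a_1=0$ and hence the desired invariant great circle.
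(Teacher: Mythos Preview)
The paper does not supply its own proof of this proposition: it states, just before Proposition~\ref{pro:06}, that ``the next results jointly with their respective proofs can be found in Camacho~\cite{ref:2} or in~\cite{ref:18}.'' So there is no in-paper argument to compare against.

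Your argument is correct. The normalization $s_1=(0,0,1)$ (so $a_3=a_6=0$) and the further rotation about the $z$-axis putting the unstable eigendirection on the $x$-axis (so $a_7=0$, $a_4>0>a_8$) are legitimate since $SO(3)$ preserves the form \eqref{eq:04}. The quadratic coefficients of the invariant-manifold graphs, $c_2=a_1/(a_8-2a_4)$ near $s_1$ and $\tilde c_2=a_1/(2a_4-a_8)$ near $-s_1$, are computed correctly; since $a_8-2a_4<0<2a_4-a_8$ they have opposite signs when $a_1\ne 0$, forcing $l$ to cross the great circle $\{y=0\}$. On that circle (away from the poles) $\dot y=-a_1x^2$ has a fixed nonzero sign, so the region $\{y\cdot\mathrm{sgn}(c_2)>0\}$ is forward invariant for the flow; an orbit that enters it near $s_1$ can never reach $\{y\cdot\mathrm{sgn}(\tilde c_2)>0\}$ near $-s_1$, giving the contradiction. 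Once $a_1=0$, $\dot y|_{y=0}\equiv 0$, so $\{y=0\}\cap\mathbb S^2$ is invariant and, by uniqueness of the unstable curve tangent to the $x$-axis at $s_1$, contains $l$. The second case is indeed the mirror image with $(x,y,a_7,a_1)$ replaced by $(y,x,a_5,a_2)$.

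One small clarification you might add: the reason $l$ is tangent to the \emph{same} eigendirection at both poles is that $l$, being a single orbit, lies in $W^u(s_1)\cap W^s(-s_1)$ (or the reverse), and the stable eigendirection of $-M$ coincides with the unstable eigendirection of $M$. You use this implicitly, and it is what makes the two local expansions comparable.
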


\section{Stereographic projection}
\label{sec:08} We identify $\mathbb R^2$ as the plane $ax+by+cz=0$
with $a^2+b^2+c^2=1$. Suppose that $c\neq 0$, then we denote the
points of $\mathbb R^2$ as $(u,v,-(au+bv)/c)$. Let $\pi:\mathbb
R^2\rightarrow \mathbb S^2\setminus\{(a,b,c)\}$ be the
diffeomorphism given by $\pi
(u,v)=1/\lambda(x=a\lambda-2c^2(a-u),\; y=b\lambda-2c^2(b-v),\;
z=c\lambda-2c(c^2+au+bv))$, where
$\lambda=c^2(1+u^2+v^2)+(au+bv)^2$. That is, $\pi$ is the inverse
map of the {\it stereographic projection} $\pi^{-1}:\mathbb
S^2\setminus\{(a,b,c)\}\rightarrow \mathbb R^2$ defined by
$\pi^{-1} (x,y,z)=$
\begin{equation}
\label{eq:136} \left(u=\frac{x-a(ax+by+cz)}{1-(ax+by+cz)},\;
v=\frac{y-b(ax+by+cz)}{1-(ax+by+cz)},\frac{z-c(ax+by+cz)}{1-(ax+by+cz)}\right).
\end{equation}
Through the stereographic projection $\pi^{-1}$ the homogeneous
polynomial system $X$ on $\mathbb S^2$ becomes the differential
system
\[
\begin{array}{lcl}
\dot{u} & = & \displaystyle
\frac{\lambda}{2c^2}(\bar{P}+(u-a)(a\bar{P}+b\bar{Q}+c\bar{R})),
\\
\dot{v} & = & \displaystyle
\frac{\lambda}{2c^2}(\bar{Q}+(v-b)(a\bar{P}+b\bar{Q}+c\bar{R})),
\end{array}
\]
on the plane $\mathbb R^{2}$. Here $\displaystyle
\bar{F}=F\left(\pi (u,v)\right)$. Since $X$ is a homogeneous
polynomial vector field of degree $m$ we have that
\[
\begin{array}{lcl} \dot{u} & = & \displaystyle \frac{1}{2c^2\lambda^{m-1}}\left(\tilde{P}+(u-a)(a\tilde{P}+b\tilde{Q}+
c\tilde{R})\right), \\ \dot{v} & = & \displaystyle
\frac{1}{2c^2\lambda^{m-1}}\left(\tilde{Q}+(v-b)(a\tilde{P}+b\tilde{Q}+
c\tilde{R})\right),
\end {array}
\]
where
$\tilde{F}=F(a\lambda-2c^2(a-u),b\lambda-2c^2(b-v),c\lambda-2c(c^2+au+bv))$.
If $t$ denotes the independent variable in the above differential
system, then this system becomes polynomial introducing the new
independent variable $s$ through $ds=\lambda^{1-m}/(2c^2)dt$, i.e.
\begin{equation}
\label{eq:137}
\begin{array}{lcl} \dot{u} & = & \mathcal P(u,v)=\displaystyle \tilde{P}+(u-a)(a\tilde{P}+b\tilde{Q}+
c\tilde{R}), \\
\dot{v} & = & \mathcal Q(u,v)=\displaystyle
\tilde{Q}+(v-b)(a\tilde{P}+b\tilde{Q}+ c\tilde{R}).
\end {array}
\end{equation}
Now the dot denotes derivative with respect to the variable $s$.
In the cases $a\neq 0$ the stereographic projection (\ref{eq:136})
induces a polynomial vector field on the plane $ax+by+cz=0$
determined by
\begin{equation}
\label{eq:148}
\begin{array}{lcl} \dot{u} & = & \mathcal P(u,v)=\displaystyle \tilde{Q}+(u-b)(a\tilde{P}+b\tilde{Q}+
c\tilde{R}), \\
\dot{v} & = & \mathcal Q(u,v)=\displaystyle
\tilde{R}+(v-c)(a\tilde{P}+b\tilde{Q}+ c\tilde{R}),
\end {array}
\end{equation}
where
$\tilde{F}=F(a\lambda_1-2a(a^2+bu+cv),b\lambda_1-2a^2(b-u),c\lambda_1-2a^2(c-v))$,
with $\lambda_1=a^2(1+u^2+v^2)+(bu+cv)^2$; and in the case $b\neq
0$ we get
\begin{equation}
\label{eq:149}
\begin{array}{lcl} \dot{u} & = & \mathcal P(u,v)=\displaystyle \tilde{P}+(u-a)(a\tilde{P}+b\tilde{Q}+
c\tilde{R}), \\
\dot{v} & = & \mathcal Q(u,v)=\displaystyle
\tilde{R}+(v-c)(a\tilde{P}+b\tilde{Q}+ c\tilde{R}),
\end {array}
\end{equation}
where
$\tilde{F}=F(a\lambda_2-2b^2(a-u),b\lambda_2-2b(b^2+au+cv),c\lambda_2-2b^2(c-v))$,
with $\lambda_2=b^2(1+u^2+v^2)+(au+cv)^2$.

The planar vector field induced by the stereographic projection
\eqref{eq:136} on the plane $ax+by+cz=0$ with $a^2+b^2+c^2=1$ will
be called the {\it stereographic projection of the vector field
$X=(P,Q,R)$ at the point} $(a,b,c)\in \mathbb S^2$.

In general we will consider the case $(a,b,c)=(0,0,1)$. Thus
(\ref{eq:137}) becomes
\begin{equation}
\label{eq:139}
\begin{array}{lcl} \dot{u} & = & \displaystyle P(2u,2v,u^2+v^2-1)
+uR(2u,2v,u^2+v^2-1), \\
\dot{v} & = & \displaystyle Q(2u,2v,u^2+v^2-1)
+vR(2u,2v,u^2+v^2-1).
\end {array}
\end{equation}

The proof of the next proposition can be find in \cite{ref:17} or
in \cite{ref:18}.
\begin{proposition}
\label{pro:16} Let $X=(P,Q,R)$ be a homogeneous polynomial vector
field of degree $m$ on $\mathbb S^2$. Then the planar vector field
induced by the stereographic projection \eqref{eq:136} on the
plane $ax+by+cz=0$ with $a^2+b^2+c^2=1$ has degree $2m$. Moreover,
it has degree $2m-1$ if $(a,b,c)=(0,0,1)$ is a singularity of $X$
on $\mathbb S^2$.
\end{proposition}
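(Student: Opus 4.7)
The plan is to control the leading parts in $(u,v)$ of the induced system~\eqref{eq:137} by exploiting the polynomial identity $xP+yQ+zR\equiv 0$ in $\mathbb R[x,y,z]$. This identity holds because $xP+yQ+zR$ is a homogeneous polynomial that vanishes on $\mathbb S^2$, hence vanishes identically. Substituting $(x,y,z)=(\tilde x,\tilde y,\tilde z)$ yields at once the polynomial identity
\[
\tilde x\,\tilde P+\tilde y\,\tilde Q+\tilde z\,\tilde R\equiv 0\quad\text{in }\mathbb R[u,v].
\]

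The first step is to extract top-degree information. Since $\tilde x,\tilde y,\tilde z$ are degree-$2$ polynomials with degree-$2$ components $a\lambda_2,b\lambda_2,c\lambda_2$ (where $\lambda_2=(a^2+c^2)u^2+2abuv+(b^2+c^2)v^2$ is the leading part of $\lambda$), for any homogeneous $F$ of degree $m$ the degree-$2m$ component of $\tilde F$ equals $\lambda_2^m F(a,b,c)$. Applied to $aP+bQ+cR$, the sphere relation $(aP+bQ+cR)(a,b,c)=0$ kills the leading term, so $\deg(a\tilde P+b\tilde Q+c\tilde R)\leq 2m-1$. Feeding this back into~\eqref{eq:137} bounds $\deg\mathcal P$ and $\deg\mathcal Q$ by $2m$. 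Extracting the degree-$(2m-1)$ component of $a\tilde P+b\tilde Q+c\tilde R$ from the degree-$(2m+1)$ slice of the key identity and combining it with $\tilde P_{2m}=\lambda_2^m P(a,b,c)$ then produces an explicit formula for the degree-$2m$ part of $\mathcal P$, which is a nonzero polynomial in $(u,v)$ provided $(P,Q,R)(a,b,c)\neq 0$, i.e.\ provided $(a,b,c)$ is not a singularity. This settles the first assertion.

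For the second assertion I specialise to $(a,b,c)=(0,0,1)$, a singularity of $X$. Then $P(0,0,1)=Q(0,0,1)=0$, while $R(0,0,1)=0$ follows automatically by evaluating $aP+bQ+cR=0$ at $(0,0,1)$. The formula $\tilde F_{2m}=\lambda_2^m F(0,0,1)$ now vanishes for $F\in\{P,Q,R\}$, so $\tilde P,\tilde Q,\tilde R$ each have degree $\leq 2m-1$; from~\eqref{eq:139}, the only way $\dot u=\tilde P+u\tilde R$ could still reach degree $2m$ is through $u\cdot\tilde R_{2m-1}$. A monomial count in $\tilde R=R(2u,2v,u^2+v^2-1)$ shows that the degree-$(2m-1)$ component of $\tilde R$ is sourced only by the $xz^{m-1}$ and $yz^{m-1}$ terms of $R$, whose coefficients equal $R_x(0,0,1)$ and $R_y(0,0,1)$. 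Differentiating $xP+yQ+zR\equiv 0$ with respect to $x$ and $y$ and evaluating at $(0,0,1)$ gives $R_x(0,0,1)=-P(0,0,1)=0$ and $R_y(0,0,1)=-Q(0,0,1)=0$; hence $\tilde R_{2m-1}=0$, so $\deg\dot u\leq 2m-1$, and analogously $\deg\dot v\leq 2m-1$.

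The main obstacle is the bookkeeping of successive cancellations of leading parts. All of them stem from the single identity $xP+yQ+zR\equiv 0$ (and, in the singular case, also its first-order derivatives at $(0,0,1)$), but the drop to $2m-1$ must be traced through two layers of cancellation: first $\tilde F_{2m}=0$ for $F\in\{P,Q,R\}$, and then $\tilde R_{2m-1}=0$ via the derivative identity.
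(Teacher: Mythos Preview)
Your argument is correct. The paper itself does not prove this proposition; it simply cites \cite{ref:17} and \cite{ref:18}, so there is no in-paper proof to compare against.

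Your approach---pushing everything through the polynomial identity $xP+yQ+zR\equiv 0$ (which the paper states as Proposition~\ref{pro:02})---is the natural one. The upper bound $\deg\mathcal P,\deg\mathcal Q\le 2m$ follows cleanly from $(aP+bQ+cR)(a,b,c)=0$, and your treatment of the singular case at $(0,0,1)$ is clean: the crucial extra cancellation $\tilde R_{2m-1}=0$ is exactly pinned down by the monomial count (only $xz^{m-1}$ and $yz^{m-1}$ can contribute) together with the differentiated identity $R_x(0,0,1)=-P(0,0,1)=0$ and $R_y(0,0,1)=-Q(0,0,1)=0$.

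One small remark: for the first assertion you only sketch the computation showing that $\mathcal P_{2m}$ is genuinely nonzero when $(a,b,c)$ is not a singularity. If you want this to be airtight, it is worth recording the explicit formula. Carrying out the degree-$(2m+1)$ extraction you describe and simplifying with $aP_0+bQ_0+cR_0=0$ (writing $P_0=P(a,b,c)$, etc.) yields, in the case $c\neq 0$,
\[
\mathcal P_{2m}=\lambda_2^{m-1}\bigl(-(a^2+c^2)P_0\,u^2-2abQ_0\,u^2-2(b^2+c^2)Q_0\,uv+(b^2+c^2)P_0\,v^2\bigr),
\]
whose $v^2$-coefficient $(b^2+c^2)P_0$ (or its $uv$-coefficient $-2(b^2+c^2)Q_0$ when $P_0=0$) is nonzero as soon as $(P_0,Q_0)\neq(0,0)$, i.e.\ as soon as $(a,b,c)$ is not a singularity. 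With this made explicit, your proof is complete.
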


\section{Classification of centers of the quadratic homogeneous polynomial vector fields on $\mathbb S^2$}
\label{sec:10}
The classification of the centers of the quadratic
homogeneous polynomial vector fields on $\mathbb S^2$ is
equivalent to the classification of the centers of system
(\ref{eq:103}) induced from (\ref{eq:04}) by the central
projection (\ref{eq:82}).
\begin{proposition}
\label{pro:09} Let $X$ be a homogeneous polynomial vector fields
on $\mathbb S^2$ of degree $2$. Then the system associated to $X$
can be written in the form \eqref{eq:04} with $a_3=a_6=0$.
\end{proposition}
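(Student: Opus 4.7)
The plan is to combine Proposition \ref{pro:05} (which gives the general form \eqref{eq:04}) with the existence of a singularity on $\mathbb{S}^{2}$, and then absorb this singularity into the point $(0,0,-1)$ by a rotation. The only moving part is to check that the vanishing of $a_{3}$ and $a_{6}$ is exactly the condition that $(0,0,-1)$ be a singularity.

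First, I would invoke the Poincar\'e--Hopf theorem (equivalently, the Hairy Ball theorem): since $\chi(\mathbb{S}^{2})=2\neq 0$, any continuous tangent vector field on $\mathbb{S}^{2}$, and in particular the polynomial vector field $X$, must possess at least one singular point $p\in\mathbb{S}^{2}$.

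Next, pick an orthogonal transformation $T\in O(3)$ with $T(p)=(0,0,-1)$; this is always possible because both points lie on the unit sphere. Orthogonal linear changes of coordinates in $\mathbb{R}^{3}$ send homogeneous polynomials of degree $2$ to homogeneous polynomials of degree $2$, preserve $\mathbb{S}^{2}$, and therefore preserve the identity \eqref{eq:02} characterizing tangency to $\mathbb{S}^{2}$. Hence the pushforward $T_{\ast}X$ is again a quadratic homogeneous polynomial vector field on $\mathbb{S}^{2}$, and Proposition \ref{pro:05} guarantees that it can be written in the form \eqref{eq:04} for some new coefficients $a_{1}',\dots,a_{8}'$.

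Finally, I would evaluate the resulting system at $(0,0,-1)$. Direct substitution into \eqref{eq:04} yields
\[
P(0,0,-1)=a_{3}',\qquad Q(0,0,-1)=a_{6}',\qquad R(0,0,-1)=0,
\]
where the last equality holds automatically because every monomial in $R$ contains at least one factor of $x$ or $y$. Since $(0,0,-1)=T(p)$ is a singularity of $T_{\ast}X$, we must have $a_{3}'=a_{6}'=0$, which is precisely the claim of the proposition. There is no real obstacle here: the only conceptually non-trivial input is the existence of a singularity of $X$ on $\mathbb{S}^{2}$, and everything else reduces to a direct substitution after the orthogonal normalization.
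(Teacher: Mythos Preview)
Your proof is correct and follows essentially the same approach as the paper: invoke Poincar\'e--Hopf to obtain a singularity, rotate it to $(0,0,-1)$, and read off $a_3=a_6=0$ from the normal form \eqref{eq:04}. The paper phrases the last step via the projected system \eqref{eq:103} (where $a_3,a_6$ appear as the constant terms), while you substitute directly into \eqref{eq:04}, but this is only a cosmetic difference.
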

\begin{proof} By the Poincar\'e--Hopf theorem (see \cite{ref:14}), a system on
$\mathbb S^2$ always have at least one singularity. Therefore, we
can suppose that (\ref{eq:04}) has a singularity at $(0,0,-1)$,
because we can do a rotation of $SO(3)$ which preserves all the
properties of $X$. Hence, without loss of generality $(0,0)$ is a
singularity of (\ref{eq:103}). This means that we can suppose that
in (\ref{eq:04}) $a_3=a_6=0$.
\end{proof}
By Proposition \ref{pro:09}, (\ref{eq:103}) becomes
\begin{equation}
\label{eq:115}
\begin{array}{lcl} \dot{u} = P(u,v) & = & -a_{4}u-a_{5}v+
a_{1}uv+a_{2}v^2
-a_{4}u^3-\\ & & (a_{5}+a_{7})u^2v-a_{8}uv^2, \\
\dot{v} = Q(u,v) & = & -a_{7}u-a_{8}v-a_{1}u^2
-a_{2}uv-a_{4}u^2v-\\ & & (a_{5}+a_{7})uv^2 -a_{8}v^3.
\end {array}
\end{equation}

Let $\widetilde{X}=(P,Q)$. Consider the linear part of
$\widetilde{X}$ on $(0,0)$, i.e.
\[
D\widetilde{X}(0,0)=\left(\begin{array}{cc}-a_4 & -a_5
\\ -a_7 & -a_8\end{array}\right),
\]
and denote by {\rm tr}$(D\widetilde{X}(0,0))=-a_4-a_8$ and $\det(
D\widetilde{X}(0,0))=a_4a_8-a_5a_7$ its trace and its determinant.
If $(0,0)$ is a center of (\ref{eq:115}), then after a linear
change of variables and a rescaling of the time variable, it can
be written in one of the following three forms:
\[
\dot{r}  = -s+P_2(r,s), \;\; \dot{s}  =  r+ Q_2(r,s).
\]
called a {\it linear type center};
\[
\dot{r} = s+P_2(r,s), \;\; \dot{s}  =  Q_2(r,s).
\]
called a {\it nilpotent center};
\[
\dot{r} = P_2(r,s), \;\; \dot{s}  =  Q_2(r,s).
\]
called a {\it linearly zero center}, where $P_{2}$, $Q_2$ are
polynomials with terms bigger or equal than $2$.

We say that $p=(0,0,-1)$ is a {\it linear type center} of $X$ if
$(0,0)$ is a linear type center of (\ref{eq:115}). In the same way
we define {\it nilpotent centers} and {\it linearly zero centers}
of $X$.

\begin{proposition}
\label{pro:11} Let $\widetilde{X}$ be the vector field associated
to system \eqref{eq:115}. If $a_4=a_5=a_7=a_8=0$, i.e.
$D\widetilde{X}(0,0)$ is the zero matrix, then $(0,0)$ is not a
center of $\widetilde{X}$. Moreover, the phase portrait of
\eqref{eq:04} is equivalent to Figure \ref{figura3}.
\end{proposition}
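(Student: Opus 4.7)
The plan is to substitute $a_4=a_5=a_7=a_8=0$ directly into \eqref{eq:115}, observe that the origin becomes a non-isolated equilibrium, and then exploit the resulting first integral $z$ to describe the global flow on $\mathbb S^2$.

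For the first assertion, after substitution the planar system \eqref{eq:115} reduces to
\[
\dot u = v(a_1 u + a_2 v),\qquad \dot v = -u(a_1 u + a_2 v).
\]
If $(a_1,a_2)=(0,0)$ the vector field is identically zero and certainly $(0,0)$ is not a center. Otherwise the common factor $a_1 u + a_2 v$ vanishes along a whole line through the origin, so every point of that line is a singular point of $\widetilde X$. In particular $(0,0)$ is not isolated, so by definition it cannot be a center; this handles the first claim. (Equivalently, one may note that $u^2+v^2$ is a first integral of $\widetilde X$, so no orbit near $(0,0)$ can wind around it.)

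For the global phase portrait I would return to \eqref{eq:04}. Using Proposition \ref{pro:09} we may take $a_3=a_6=0$, and the hypothesis gives
\[
\dot x = y(a_1 x + a_2 y),\qquad \dot y = -x(a_1 x + a_2 y),\qquad \dot z = 0.
\]
Thus $H(x,y,z)=z$ is a first integral on $\mathbb S^2$, so every orbit is contained in a parallel circle $\mathbb S^2\cap\{z=c\}$ with $c\in[-1,1]$. The singular set of $X$ on $\mathbb S^2$ is the great circle $\mathcal C=\mathbb S^2\cap\{a_1 x + a_2 y=0\}$, which contains both poles.

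It remains to read off the picture. For each $-1<c<1$ the parallel $\mathbb S^2\cap\{z=c\}$ meets $\mathcal C$ in exactly two (antipodal) points, so the induced flow on that parallel consists of these two fixed points together with two heteroclinic arcs joining them; the two arcs are traversed in opposite senses because the scalar factor $a_1x+a_2y$ changes sign across $\mathcal C$. Gathering all of these parallels one obtains a great circle of singularities foliated by heteroclinic arcs between antipodal singular points, which is exactly Figure \ref{figura3}. The only subtlety I expect is presentational rather than mathematical: writing down an explicit orientation-preserving homeomorphism of $\mathbb S^2$ carrying this flow pattern onto the one depicted in Figure \ref{figura3}, which one may do by sending $\mathcal C$ to the drawn circle of equilibria and mapping each parallel $\{z=c\}$ to the corresponding level set of the figure.
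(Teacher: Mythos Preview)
Your argument is correct and follows the same route as the paper: substitute the hypothesis into \eqref{eq:04} to obtain $\dot x=y(a_1x+a_2y)$, $\dot y=-x(a_1x+a_2y)$, $\dot z=0$, and read off the phase portrait from the first integral $z$ and the great circle of singularities $\{a_1x+a_2y=0\}$. The paper's proof is extremely terse (it simply writes down this reduced system and asserts the conclusion), whereas you supply the details; one small wording point is that the two fixed points on each parallel $\{z=c\}$ are diametrically opposite on that circle but not antipodal on $\mathbb S^2$.
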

\begin{proof}
By hypothesis system (\ref{eq:04}) becomes
\[
\dot{x}=y(a_1x+a_2y),\;\;\dot{y}=-x(a_1x+a_2y),\;\;\dot{z}=0.
\]
Therefore, the phase portrait of (\ref{eq:04}) is equivalent to
Figure \ref{figura3}. Hence $(0,0)$ is not a center of
$\widetilde{X}$.
\end{proof}

\begin{figure}[ptb]
\begin{center}
\includegraphics[height=0.8in,width=0.8in]{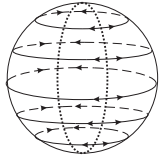}
\end{center}
\caption{The phase portrait of system (\ref{eq:04}) for
$a_3=a_4=a_{5}=a_6=a_7=a_8=0$.} \label{figura3}
\end{figure}

\begin{proposition}
\label{pro:10} The origin of \eqref{eq:115} cannot be a nilpotent
center, i.e. if \linebreak {\rm tr}$(D\widetilde{X}(0,0))=\det(
D\widetilde{X}(0,0))=0$ and $|a_4|+|a_5|+|a_7|+|a_8|\neq 0$, then
$(0,0)$ is not a center.
\end{proposition}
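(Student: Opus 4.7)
The plan is to study the vector field $\widetilde{X}$ along the line $\ell$ through the origin in the eigendirection of $L=D\widetilde{X}(0,0)$. Under the hypothesis $\mathrm{tr}(L)=\det(L)=0$ with $L\neq 0$, one has $a_{8}=-a_{4}$ and $a_{4}^{2}+a_{5}a_{7}=0$, and $L$ is nilpotent of rank one. Let $\vec{e}\in\ker L\setminus\{0\}$ and put $\ell=\{t\vec{e}:t\in\mathbb{R}\}$. The idea is that the restriction $\widetilde{X}|_{\ell}$ is purely quadratic in the parameter $t$, and that this rigidity is incompatible with the origin being a center.

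The structural claim I would establish first is that $\widetilde{X}(t\vec{e})=t^{2}\vec{W}$ for a constant vector $\vec{W}$. The linear contribution vanishes by the definition of $\vec{e}$. For the cubic contribution, observe that, using $a_{8}=-a_{4}$, the cubic parts of $\dot{u}$ and $\dot{v}$ in \eqref{eq:115} are $-a_{4}u^{3}-(a_{5}+a_{7})u^{2}v+a_{4}uv^{2}$ and $-a_{4}u^{2}v-(a_{5}+a_{7})uv^{2}+a_{4}v^{3}$; splitting into the subcases $a_{4}\neq 0$ (with $\vec{e}=(-a_{5},a_{4})$) and $a_{4}=0$ (where exactly one of $a_{5},a_{7}$ is nonzero, giving $\vec{e}=(1,0)$ or $(0,1)$), direct substitution shows the cubic terms all vanish on $\ell$, thanks to the identity $a_{4}^{2}+a_{5}a_{7}=0$. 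The quadratic contribution defines $\vec{W}$; in the main case $a_{4}\neq 0$ one computes $\vec{W}=(a_{2}a_{4}-a_{1}a_{5})(a_{4},a_{5})$, and an analogous explicit $\vec{W}$ in the degenerate cases.

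Once the claim holds, there are two possibilities. If $\vec{W}=0$, then $\widetilde{X}$ vanishes identically on $\ell$, so $\ell$ is a curve of equilibria and $(0,0)$ fails to be isolated, hence cannot be a center. If $\vec{W}\neq 0$, one checks in each case that $\vec{W}$ is not a scalar multiple of $\vec{e}$; therefore the component of $\widetilde{X}|_{\ell}$ normal to $\ell$, namely $t^{2}(\vec{W}\cdot\vec{n})$ for a fixed normal $\vec{n}$, has a constant nonzero sign for every $t\neq 0$. Since any closed orbit encircling the origin must meet $\ell$ at two points on opposite sides of the origin where the normal component of the velocity must have opposite signs, no such closed orbit can exist, and the origin is not a center.

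The main obstacle is the cubic cancellation along $\ell$, which is exactly the content of the determinant relation $a_{4}^{2}+a_{5}a_{7}=0$ combined with the trace relation $a_{8}=-a_{4}$; without both identities the cubic terms would survive and an Andreev-type analysis would be needed. The remaining work, separating $a_{4}\neq 0$ from $a_{4}=0$ and verifying the non-parallelism of $\vec{W}$ with $\vec{e}$, is a routine computation once the algebraic identity has been exploited.
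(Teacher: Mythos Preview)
Your argument is correct and takes a genuinely different route from the paper's proof. The paper proceeds by linearly normalising \eqref{eq:115} to the standard nilpotent form $\dot r=s+P_2(r,s)$, $\dot s=Q_2(r,s)$, then computes the invariant series $\varphi$ and $\psi$ and invokes Theorem~67 of Andronov et al.\ to conclude that the origin is a cusp (hence not a center); the degenerate case where the leading coefficient vanishes is dispatched by observing that the origin is then non--isolated. Your approach avoids the normal--form machinery entirely: you work directly on the kernel line $\ell=\mathbb R\vec e$ and exploit the algebraic identities $a_8=-a_4$, $a_4^2+a_5a_7=0$ to show that the cubic terms of \eqref{eq:115} vanish identically along $\ell$, leaving $\widetilde X(t\vec e)=t^2\vec W$; the one--sided transversality that follows (the normal component $t^2(\vec W\cdot\vec n)$ never changes sign) then forbids any closed orbit encircling the origin. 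In fact your $\vec W$ is orthogonal to $\vec e$ in every case, which makes the non--parallelism check immediate. What the paper's method buys is the precise local type (cusp), which is reused in the proof of Theorem~\ref{the:09}; your method is quicker and self--contained but yields only the negative statement that the proposition actually requires.
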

\begin{proof}
As {\rm tr}$(D\widetilde{X}(0,0))=\det( D\widetilde{X}(0,0))=0$,
i.e. $a_8=-a_4$ and $a_4^2+a_7a_5=0$, we suppose first that
$a_4a_7\neq 0$. Therefore, $a_5=\displaystyle -a_4^2/a_7$. Doing
the change of variables $u=r-\displaystyle s/a_4$,
$v=\displaystyle a_7s/a_4$ system (\ref{eq:115}) can be written as
\begin{equation}
\label{eq:120} \dot{r}=s+P_2(r,s), \;\;\dot{s}=Q_2(r,s).
\end{equation}
Let $\varphi(r)=\alpha_1r+\alpha_2r^2+\cdots$ the solution of
$s+P_2(r,s)=0$. Therefore, $\alpha_1=0$ and
$\alpha_2=\displaystyle (a_1a_4+a_2a_7)/a_7$. If $\alpha_2=0$,
then in the above system $(0,0)$ cannot be a center because in
this case it is not an isolated singularity. Now, if $\alpha_2\neq
0$, substituting $\varphi$ in $\psi (r)=Q_2(r,\varphi(r))$ we
obtain that $\psi(r)= \beta_2r^2+\cdots$, where
$\beta_2=\displaystyle ((a_4^2+a_7^2)(a_4a_1+a_7a_2))/(a_4a_7)$.
Hence, as $\beta_2 \neq 0$, it follows, by Theorem $67$ of page
$362$ of \cite{ref:7}, that $(0,0)$ is not a center.

Now, we have to study the case $a_4^2+a_5a_7=0$, $a_4a_7=0$ and
$|a_4|+|a_5|+|a_7|\neq 0$. Therefore, we have two possibilities
$a_4=a_5=0$, $a_7\neq 0$ and $a_4=a_7=0$, $a_5\neq 0$ for system
(\ref{eq:115}). But, as these two possibilities are equivalent by
the change of variables $(u,v)\mapsto (v,u)$, we need to study
only the last one. Therefore, after a rescaling of time given by
$d\tau=-a_5dt$ in (\ref{eq:115}), we obtain a system of the form
\begin{equation}
\label{eq:121} \dot{u}=s+P_2(u,v), \;\;\dot{v}=Q_2(u,v).
\end{equation}
Note that $a_1\neq 0$ in \eqref{eq:121}, otherwise $(0,0)$ is not
an isolated singularity. As in the previous case,  we have that
$\varphi(u)\equiv 0$ is the solution of $v+P_2(u,v)=0$ and $\psi
(u)=Q_2(u,\varphi (u))=\displaystyle (a_1u^2)/a_5+\cdots$. Hence,
by the same argument $(0,0)$ is not a center.
\end{proof}

Before proving Theorem \ref{the:08} we do one remak.
\begin{remark}
\label{ob:03} Let $U$ be an open subset of $\mathbb R^{2}$,
$F:U\rightarrow \mathbb R$ be an analytic function which is not
identically zero on $U$ and $X$ be a polynomial vector field
associated to a system of the form
\begin{equation}
\label{eq:101} \dot{x}=P(x,y), \;\;\; \dot{y}=Q(x,y),
\end{equation}
where $P$ and $Q$ are polynomials in the variables $x$ and $y$
with real coefficients. The function $F$ is an integrating factor
of this polynomial system on $U$ if one of the following three
equivalent conditions holds on $U$ $\displaystyle \frac{\partial
(FP)}{\partial x}=-\frac{\partial (FQ)}{\partial x}$, ${\rm
div}(FP,FQ)=0$, $XF=-F\; {\rm div}(P,Q)$. As usual the divergence
of the vector field $X$ is defined by ${\rm div}(X)={\rm
div}(P,Q)=\displaystyle \frac{\partial P}{\partial
x}+\frac{\partial Q}{\partial y}$.

The first integral $H$ associated to the integrating factor $F$ is
given by
\begin{equation}
\label{eq:69} H(x,y)=\int F(x,y)P(x,y)dy+h(x),
\end{equation}
where $h(x)$ is chosen in order that it satisfies $\displaystyle
\frac{\partial H }{\partial x}=-FQ$. Note that $\displaystyle
\frac{\partial H }{\partial y}=FP$, so that $XH\equiv 0$. The
function $H$ is single--valued, if $U$ is simply connected and
$F\not\equiv 0$.

Conversely, given a first integral $H$ of the system associated to
$X$ we always can find an integrating factor $F$ such that
$\displaystyle \frac{\partial H }{\partial y}=FP$ and
$\displaystyle \frac{\partial H }{\partial x}=-FQ$.

Let $V:U\rightarrow \mathbb R$ be an analytic function which is
not identically zero on $U$. The function $V$ is an inverse
integrating factor of the polynomial system (\ref{eq:101}) on $U$
if
\begin{equation}
\label{eq:102} P\frac{\partial V}{\partial x}+Q\frac{\partial
V}{\partial y}=\left(\frac{\partial P}{\partial x}+\frac{\partial
Q}{\partial y}\right)V.
\end{equation}
We note that $\{V=0\}$ is formed by orbits of system
(\ref{eq:101}) and $F= 1/V$ defines on $U\setminus \{V=0\}$ an
integrating factor of (\ref{eq:101}).
\end{remark}

\smallskip\noindent {\it Proof of Theorem \ref{the:08}.} We have
that if {\rm tr}$(D\widetilde{X}(0,0))=0$ and $\det
(D\widetilde{X}(0,0))>0$, i.e. $a_8=-a_4$ and $a_4^2+a_7a_5<0$,
then the origin can be a linear type center or a weak focus. Note
that in this case $a_5a_7\neq 0$. Hence, doing the change of
variables $u=-\displaystyle \sqrt{-(a_4^2+a_7a_5)}r/a_7+
a_4s/a_7$, $v=s$ and introducing the new time variable $\tau$
through $d\tau =\sqrt{-(a_4^2+a_7a_5)}\,\,dt$ system
(\ref{eq:115}) can be written as
\begin{equation}
\label{eq:119}
\dot{r}=-s+P_2(r,s)+P_3(r,s),\;\;\dot{s}=r+Q_2(r,s)+Q_3(r,s),
\end{equation}
where $P_2$, $P_3$, $Q_2$ and $Q_3$ are respectively the
homogeneous parts of degree $2$ and $3$ of the above system. We
have that the origin of this system is a center if and only if its
Lyapunov coefficients $V_{i}$ are zero. Therefore, we have to
determine the Lyapunov coefficients $V_{i}$ that can be obtained
from
\begin{equation}
\label{eq:116} P\frac{\partial H}{\partial r}+Q\frac{\partial
H}{\partial s}=\sum_{i=1}^{\infty}V_{i}(r^{2}+s^2)^{i+1},
\end{equation}
where $H(r,s)=\displaystyle
1/2(r^2+s^2)+\sum^{\infty}_{j=3}H_{j}(r,s)$ and $H_{j}$ are
homogeneous polynomial of degree $j$.

We will show that if $V_{1}=0$, then $V_{k}=0$ for all $k\in
\mathbb N$. From the homogeneous part of degree two, three and
four of (\ref{eq:116}), we obtain the following system
\begin{equation}
\label{eq:117}
\begin{array}{rcl}
\displaystyle -s\frac{\partial H_{2}}{\partial r}+r\frac{\partial
H_{2}}{\partial s} & \equiv & 0, \\ \displaystyle -s\frac{\partial
H_{3}}{\partial r}+r\frac{\partial H_{3}}{\partial
s}+P_{2}\frac{\partial H_{2}}{\partial r}+Q_{2}\frac{\partial
H_{2}}{\partial s} & = & 0, \\ \displaystyle -s\frac{\partial
H_{4}}{\partial r}+r\frac{\partial H_{4}}{\partial
s}+P_{2}\frac{\partial H_{3}}{\partial r}+Q_{2}\frac{\partial
H_{3}}{\partial s}+P_{3}\frac{\partial H_{2}}{\partial
r}+Q_{3}\frac{\partial H_{2}}{\partial s} & = & V_{1}(r^2+s^2)^2.
\end{array}
\end{equation}
Solving system (\ref{eq:117}), we obtain that
$V_1=((a_5-a_7)(a_4(a_1^2-a_2^2)+a_1a_2(a_5+a_7)))/(8a_7(-a_4^2-a_5a_7)^{3/2})$.
Now, as $a_4^2+a_5a_7<0$, we have that $a_5\neq a_7$. Therefore,
$V_1=0$ if and only if $a_4(a_1^2-a_2^2)+a_1a_2(a_5+a_7)=0$.
Hence, we distinguish the following four cases.

\smallskip\noindent {\it Case 1:} $a_{1}=a_{4}=0$ and $a_2\neq 0$.
In this case system (\ref{eq:115}) becomes
\begin{equation}
\label{eq:123} \dot{u} =  -a_{5}v+a_{2}v^2
-(a_{5}+a_{7})u^2v,\;\;\dot{v}  =
-a_{7}u-a_{2}uv-(a_{5}+a_{7})uv^2.
\end{equation}
This system is invariant with respect to the change of variables
$(u,v,t)\mapsto (-u,v,-t)$, i.e. it is symmetric with respect to
the straight line $u=0$. Therefore, in this case $(0,0)$ is a
center.

\smallskip\noindent {\it Case 2:} $a_{2}=0$ and $a_1\neq 0$.
Then $a_4=0$. In this case system (\ref{eq:115}) becomes
\begin{equation}
\label{eq:124}\dot{u} = -a_{5}v+a_{1}uv
-(a_{5}+a_{7})u^2v,\;\;\dot{v} =
-a_{7}u-a_{1}u^2-(a_{5}+a_{7})uv^2.
\end{equation}
We have that system $(\ref{eq:124})$ becomes $(\ref{eq:123})$ by
the change of variables $(u,v)\mapsto (v,u)$. Therefore, this case
is equivalent to Case $1$.

\smallskip\noindent {\it Case 3:} $a_{1}=a_{2}=0$. In this
case system (\ref{eq:115}) becomes
\begin{equation}
\label{eq:125}
\begin{array}{lcl} \dot{u} & = & -a_4u-a_{5}v-a_4u^3
-(a_{5}+a_{7})u^2v+a_4uv^2, \\
\dot{v} & = & -a_{7}u+a_{4}v-a_4u^2v-(a_{5}+a_{7})uv^2+a_4v^3.
\end {array}
\end{equation}
This system has a first integral given by
$H(u,v)=((a_7+a_5)u^2-2a_4uv+a_5)/((a_5-a_7)u^2+2a_4uv+2a_5v^2+a_5)$.
As $H$ is defined in $(0,0)$, it follows that in this case $(0,0)$
is a center.

\smallskip\noindent {\it Case 4:} $a_{1}a_2\neq 0$. In this
case $V_1=0$ implies that $a_5=\displaystyle
(a_4(a_2^2-a_1^2)-a_1a_2a_7)/(a_1a_2)$. Note that
$-(a_4^2+a_5a_7)>0$ becomes $\displaystyle
((a_1a_4+a_2a_7)(a_1a_7-a_2a_4))/(a_1a_2)>0$. Now, this case is
equivalent to Case $1$ doing the orthogonal linear change of
variables
\[
\left(\begin{array}{c}x
\\ y \\ z \end{array}\right)=\left(\begin{array}{ccc}\displaystyle
-\frac{a_2}{\sqrt{a_1^2+a_2^2}} & \displaystyle
\frac{a_1}{\sqrt{a_1^2+a_2^2}} & 0 \\
\displaystyle \frac{a_1}{\sqrt{a_1^2+a_2^2}} & \displaystyle
\frac{a_2}{\sqrt{a_1^2+a_2^2}} & 0 \\ 0 & 0 & 1
\end{array}\right)\left(\begin{array}{c} \tilde{x}
\\ \tilde{y} \\ \tilde{z} \end{array}\right),
\]
where the $a_i$'s denote the coefficients of $X$ in the variables
$(x,y,z)$ satisfying the hypothesis. In fact, denoting by
$\tilde{a}_i$ the coefficients of system \eqref{eq:04} in the
variables $(\tilde{x},\tilde{y},\tilde{z})$, we have that
$\tilde{a}_1=\tilde{a}_3= \tilde{a}_4=\tilde{a}_6=\tilde{a}_8=0$,
$\tilde{a}_2=-\sqrt{a_2^2+a_1^2}$,
$\tilde{a}_5=(a_1a_7-a_2a_4)/a_1$,
$\tilde{a}_7=-(a_1a_4+a_2a_7)/a_2$. Note that $\tilde{a}_2\neq 0$
and $\tilde{a}_5\tilde{a}_7<0$. This completes the proof of the
theorem. \hfill $\Box$

\section{Two criteria for determining the nonexistence of limit cycles for homogeneous polynomial vector fields on $\mathbb S^2$}
\label{sec:09} Let $X=(P,Q,R)$ be a homogeneous polynomial vector
field on $\mathbb S^2$ of degree $m$. The next proposition will be
necessary for proving the results of this section, see its proof
in \cite{ref:15}.
\begin{proposition}
\label{pro:02} Let $f(x,y,z)=x^2+y^2+z^2-1$ and let $X$ be a
homogeneous polynomial vector field on $\mathbb S^2$. Then $f$ is
a first integral of $X$ in $\mathbb R^3$, i. e.
$P(x,y,z)x+Q(x,y,z)y+R(x,y,z)z=0$ for all $(x,y,z)\in \mathbb
R^3$.
\end{proposition}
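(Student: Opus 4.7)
The plan is to reduce the statement to a polynomial identity on $\mathbb R^3$ and then exploit homogeneity to extend the vanishing from $\mathbb S^2$ to all of $\mathbb R^3$.

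First I would compute $Xf$ directly. Since $f(x,y,z)=x^2+y^2+z^2-1$, we have
\[
Xf \;=\; P\,\frac{\partial f}{\partial x} + Q\,\frac{\partial f}{\partial y} + R\,\frac{\partial f}{\partial z} \;=\; 2xP+2yQ+2zR \;=\; 2\,g(x,y,z),
\]
where $g(x,y,z):=xP(x,y,z)+yQ(x,y,z)+zR(x,y,z)$. Thus $f$ is a first integral of $X$ on $\mathbb R^3$ if and only if $g\equiv 0$ on $\mathbb R^3$, which is precisely the stated identity. So the whole proposition reduces to establishing that polynomial identity on $\mathbb R^3$.

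The second step is to observe that $g$ is itself a homogeneous polynomial: since $X$ is a homogeneous polynomial vector field of degree $n$, each of $P$, $Q$, $R$ is homogeneous of degree $n$, and hence $g$ is homogeneous of degree $n+1$. By the very definition of a polynomial vector field on $\mathbb S^2$, namely equation \eqref{eq:02}, we know $g$ vanishes identically on $\mathbb S^2$. To upgrade this to vanishing on all of $\mathbb R^3$, take any nonzero $(x,y,z)\in\mathbb R^3$, set $r=\sqrt{x^2+y^2+z^2}>0$ and $u=(x/r,y/r,z/r)\in\mathbb S^2$; by homogeneity
\[
g(x,y,z) \;=\; g(r u) \;=\; r^{\,n+1}\,g(u) \;=\; 0,
\]
and at the origin $g(0,0,0)=0$ follows trivially from the positive degree of $g$. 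Hence $g\equiv 0$ on $\mathbb R^3$, as desired.

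There is no substantive obstacle here: the entire content is the elementary fact that a homogeneous polynomial which vanishes on the unit sphere in $\mathbb R^3$ vanishes along every ray through the origin, and therefore everywhere. The only thing to be careful about is verifying that all three components of $X$ are homogeneous of the \emph{same} degree $n$, which is built into the definition of a homogeneous polynomial vector field of degree $n$ used throughout the paper (see, e.g., \eqref{eq:04}).
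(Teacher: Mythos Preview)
Your argument is correct: you compute $Xf=2g$ with $g=xP+yQ+zR$, observe that $g$ is homogeneous of degree $n+1$ and vanishes on $\mathbb S^2$ by \eqref{eq:02}, and then use the scaling $g(ru)=r^{n+1}g(u)$ to conclude $g\equiv 0$ on $\mathbb R^3$. This is exactly the standard proof; the paper itself does not supply an argument here but simply refers the reader to \cite{ref:15}, so there is nothing further to compare.
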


Let $(a,b,c)\in \mathbb S^2$, we can suppose that $X$ does not
have a limit cycle such that $(a,b,c)$ belongs it, because we can
do a rotation of $SO(3)$ which preserves all properties of $X$.
Hence, the study of the existence of limit cycles of $X$ on
$\mathbb S^2$ is equivalent to study the existence of limit cycles
of the planar vector field induced by $X$ through the
stereographic projection (\ref{eq:136}) at the point $(a,b,c)$. We
will consider the cases when $(a,b,c)$ is the point $(0,0,1)$,
i.e. the planar vector field determined by system (\ref{eq:139}).
We denote this planar vector field by
\[
\bar{X} =
(\bar{P},\bar{Q})=(\tilde{P}+u\tilde{R},\tilde{Q}+v\tilde{R}).
\]
Here $\tilde{F}(u,v)=F(2u,2v,u^2+v^2-1)$ with $F\in \mathbb
R[x,y,z]$.

Now we state a result that can be found in \cite{ref:19} and that
will be necessary for the proof of Theorem \ref{the:11}.
\begin{theorem}
\label{the:10} Let $(\mathcal P,\mathcal Q)$ be a $C^1$ vector
field defined in an open subset $U$ of \;$\mathbb R^2$,
$(\alpha(t),\beta(t))$ a periodic solution of $(\mathcal
P,\mathcal Q)$ of period $\tau$, $K:U\longrightarrow \mathbb R$ a
$C^1$ map such that $\int^{\tau}_0K(\alpha(t),\beta(t))dt\neq 0$,
and $f=f(x,y)$ a $C^1$ solution of the linear partial differential
equation $\displaystyle\mathcal P\frac{\partial f}{\partial
x}+\mathcal Q\frac{\partial f}{\partial y}=Kf$. Then the closed
trajectory $\gamma=\{(\alpha(t),\beta(t))\in U:\; t\in [0,\tau]\}$
is contained in $\Sigma=\{(x,y)\in U:\; f(x,y)=0\}$, and $\gamma$
is not contained in a period annulus of $(\mathcal P,\mathcal Q)$.
Moreover, if the vector field $(\mathcal P,\mathcal Q)$ is
analytic, then $\gamma$ is a limit cycle.
\end{theorem}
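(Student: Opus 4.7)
The plan is to turn the PDE hypothesis into a scalar linear ODE along $\gamma$. I would set $g(t):=f(\alpha(t),\beta(t))$; the chain rule together with $\mathcal Pf_x+\mathcal Qf_y=Kf$ collapses to
\[
\dot g(t)=K(\alpha(t),\beta(t))\,g(t),
\]
so $g(t)=g(0)\exp\bigl(\int_0^t K(\alpha(s),\beta(s))\,ds\bigr)$. Applying periodicity $g(\tau)=g(0)$ together with the assumption $\int_0^\tau K(\alpha(s),\beta(s))\,ds\neq 0$, the factor $\exp\bigl(\int_0^\tau K\,ds\bigr)-1$ is nonzero, forcing $g(0)=0$ and hence $g\equiv 0$ on $[0,\tau]$. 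This is precisely $\gamma\subset\Sigma$, giving the first assertion.

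For the second assertion I would argue by contradiction. Suppose $\gamma$ lies in a period annulus $A$. Choose a $C^1$ transversal section $\Lambda$ meeting $\gamma$ at $p_0$, parametrize a neighborhood of $p_0$ in $\Lambda$ by a real parameter $s$, and let $\gamma_s$ denote the periodic orbit through $p_s$, of period $\tau(s)$. By continuous dependence on initial data, the map $s\mapsto \int_0^{\tau(s)} K(\gamma_s(t))\,dt$ is continuous in $s$ and nonzero at $s=0$, hence stays nonzero on an open neighborhood $I$ of $0$. Re-running the ODE argument of the first paragraph on each $\gamma_s$ with $s\in I$ forces $f\equiv 0$ along every such orbit, so $f$ vanishes on an open $2$-dimensional piece of $A$ around $\gamma$. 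Since $f$ is taken to genuinely define a $1$-dimensional invariant curve containing $\gamma$ rather than an open region, this yields the desired contradiction, so $\gamma$ cannot lie in a period annulus.

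Finally, under the extra hypothesis that $(\mathcal P,\mathcal Q)$ is analytic, the Poincar\'e first return map $\Pi:\Lambda\to\Lambda$ is analytic near $p_0$ and fixes $p_0$. The analytic dichotomy then forces either $\Pi$ to be the identity on a neighborhood of $p_0$ -- which would embed $\gamma$ in a period annulus, ruled out by the second assertion -- or $p_0$ to be isolated as a fixed point of $\Pi$, which is exactly the statement that $\gamma$ is a limit cycle. The main obstacle I expect is the annulus claim: the trivial solution $f\equiv 0$ formally satisfies the PDE, and turning ``$f\equiv 0$ on an open set'' into an honest contradiction relies on the tacit convention that $f$ cuts out a proper $1$-dimensional invariant curve; the first and third assertions are then immediate from the linear ODE and from analyticity of $\Pi$, respectively.
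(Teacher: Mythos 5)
The paper does not actually prove this statement: Theorem \ref{the:10} is quoted from \cite{ref:19} and used as a black box, so there is no internal proof to compare against. Your argument is the standard one from that reference and is essentially correct: reducing the PDE to the scalar linear ODE $\dot g=Kg$ along $\gamma$, using the exponential formula and periodicity to force $g\equiv 0$; the continuity-of-$\int K$ argument over nearby periodic orbits to exclude a period annulus; and the analytic Poincar\'e-map dichotomy (displacement function identically zero versus isolated zero) for the limit-cycle claim. The only genuine issue is the one you flag yourself: as literally stated the second assertion fails for the trivial solution $f\equiv 0$ (take $\mathcal P=-y$, $\mathcal Q=x$, $K\equiv 1$, $f\equiv 0$: every hypothesis holds, yet every orbit lies in a period annulus), so the theorem tacitly assumes that $f$ does not vanish identically on any open subset of $U$ --- an imprecision of the quoted statement rather than of your argument. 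This caveat is harmless for the way the theorem is used in this paper, since there $f$ is always an explicit nowhere-vanishing polynomial such as $f(u,v)=u^2+v^2+1$, and only the first assertion (that $\gamma\subset\{f=0\}=\emptyset$) is invoked.
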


\smallskip\noindent {\it Proof of Theorem \ref{the:11}.}
Consider the function $f(u,v)=u^2+v^2+1$. Using the previous
notation we have that
\[
\bar{P}\frac{\partial f}{\partial u}+\bar{Q}\frac{\partial
f}{\partial v}=
2u\tilde{P}+2u^2\tilde{R}+2v\tilde{Q}+2v^2\tilde{R}=2u\tilde{P}+2v\tilde{Q}+2(u^2+v^2)\tilde{R}.
\]
Note that, by Proposition \ref{pro:02},
$2u\tilde{P}+2v\tilde{Q}+(u^2+v^2-1)\tilde{R}=0$. Therefore,
\[
\bar{P}\frac{\partial f}{\partial u}+\bar{Q}\frac{\partial
f}{\partial
v}=-(u^2+v^2-1)\tilde{R}+2(u^2+v^2)\tilde{R}=\tilde{R}(u^2+v^2+1)=\tilde{R}f.
\]
Therefore, $f=0$ is an invariant algebraic curve of $\bar{X}$ and
$\tilde{R}$ is its cofactor.

Using the notation of Theorem \ref{the:10} we assume that $\gamma$
is a periodic orbit of $\bar{X}$. Now, if $\tilde{R}$ does not
change sign in $\mathbb R^2$, then $\int_\gamma\tilde{R}dt\neq 0$,
and since $\{(u,v)\in \mathbb R^2:\; f(u,v)=0\}=\emptyset$, by
Theorem \ref{the:10} $\bar{X}$ does not have periodic orbits.
Hence, $X$ does not have periodic orbits on $\mathbb S^2$. This
proves statement (a).

Now we will prove statement (b). Note that
\[
\langle \bar{X},\nabla
\tilde{R}\rangle\mid_{\tilde{R}=0}=\left(\tilde{P}\frac{\partial
\tilde{R}}{\partial u}+\tilde{Q}\frac{\partial \tilde{R}}{\partial
v}\right)\mid_{\tilde{R}=0}=\langle (\tilde{P},\tilde{Q}),\nabla
\tilde{R}\rangle\mid_{\tilde{R}=0}.
\]
Therefore, if $\langle (\tilde{P},\tilde{Q}),\nabla
\tilde{R}\rangle\mid_{\tilde{R}=0}$ does not change sign, then the
vector field $\bar{X}$ is transversal to the curve $\tilde{R}=0$,
so there are no periodic orbits of $\bar{X}$ intersecting the
curve $\tilde{R}=0$. Hence, if $X$ has a periodic orbit $\gamma$,
then $\gamma \subset \mathbb R^2\setminus \{\tilde{R}=0\}$. Thus,
$\int_\gamma\tilde{R}dt\neq 0$, and so by Theorem \ref{the:10} we
get $\gamma \subset \{(u,v)\in \mathbb R^2:\;
f(u,v)=0\}=\emptyset$. Therefore, $\bar{X}$ does not have periodic
orbits, i.e. $X$ does not have periodic orbits on $\mathbb S^2$.
\hfill $\Box$

{From} the proof of Theorem \ref{the:11} we have:

\begin{corollary}
The vector field $\bar{X}$ has the invariant algebraic curve
$f(u,v)=u^2+v^2+1=0$ with cofactor $K_f=\tilde{R}$.
\end{corollary}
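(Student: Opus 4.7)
The plan is to read the corollary off directly from the calculation already performed at the start of the proof of Theorem \ref{the:11}. There, with $f(u,v) = u^2+v^2+1$, the author showed $\bar{P}\partial_u f + \bar{Q}\partial_v f = \tilde{R}\,f$, which is precisely the invariance identity $\bar{X}f = K_f f$ with cofactor $K_f = \tilde{R}$. My proof simply reproduces that short computation in a self-contained form and invokes the definition of invariant algebraic curve.

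First I would write out $\bar{X}f$ explicitly. Using $\bar{P}=\tilde{P}+u\tilde{R}$, $\bar{Q}=\tilde{Q}+v\tilde{R}$, $\partial_u f = 2u$, $\partial_v f = 2v$, this gives
\[
\bar{X}f \;=\; 2u\tilde{P} + 2v\tilde{Q} + 2(u^2+v^2)\tilde{R}.
\]

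Next I would invoke Proposition \ref{pro:02}. Since the identity $xP(x,y,z)+yQ(x,y,z)+zR(x,y,z)\equiv 0$ holds on all of $\mathbb{R}^3$ (not merely on $\mathbb{S}^2$), I may substitute $(x,y,z)=(2u,2v,u^2+v^2-1)$ to obtain $2u\tilde{P}+2v\tilde{Q}+(u^2+v^2-1)\tilde{R}=0$, i.e.\ $2u\tilde{P}+2v\tilde{Q} = -(u^2+v^2-1)\tilde{R}$. Plugging this into the expression for $\bar{X}f$ yields
\[
\bar{X}f \;=\; \bigl[\,2(u^2+v^2) - (u^2+v^2-1)\,\bigr]\tilde{R} \;=\; (u^2+v^2+1)\,\tilde{R} \;=\; f\,\tilde{R},
\]
so $f=0$ is an invariant algebraic curve of $\bar{X}$ with cofactor $\tilde{R}$.

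There is essentially no obstacle; the content of the corollary is already contained in the proof of Theorem \ref{the:11}. The only point worth flagging is that the homogeneity identity of Proposition \ref{pro:02} is a polynomial identity on $\mathbb{R}^3$, so its evaluation at the off-sphere point $(2u,2v,u^2+v^2-1)$ is legitimate, which is the key ingredient that converts the vanishing of $xP+yQ+zR$ on $\mathbb{S}^2$ into the desired cofactor relation in the $(u,v)$ plane.
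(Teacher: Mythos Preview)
Your proposal is correct and follows exactly the same approach as the paper: the corollary is stated immediately after the proof of Theorem \ref{the:11} with the remark ``From the proof of Theorem \ref{the:11} we have,'' and the computation you reproduce is precisely the one carried out there. There is nothing to add.
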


We see in the beginning of this section that the study of the
existence of limit cycles of homogeneous vector fields $X$ on
$\mathbb S^2$ is equivalent to study the existence of limit cycles
of the planar vector fields induced by $X$ through the
stereographic projection at the point $(a,b,c)$. But sometimes it
is necessary to do a rotation in order that $(a,b,c)$ does not
belong to a limit cycle of $X$. In fact, if $(a,b,c)$ belongs to
the limit cycle $\gamma$ then, as the phase portrait of $X$ is
symmetric with respect to the origin, we have that $-(a,b,c)\in
-\gamma=\{-(x,y,z):\;(x,y,z)\in \gamma\}$. If degree of $X$ is
even, by Proposition $4$ of \cite{ref:17}, $\gamma \neq -\gamma$.
Therefore, if the degree of $X$ is even, it is not necessary to do
a rotation. Now if degree of $X$ is odd, by Proposition $6$ of
\cite{ref:17} we know that the case $\gamma=-\gamma$ is possible.

Now, we will consider the the planar vector fields $\bar{X}_i$,
$i=1,2,3$, determined by systems \eqref{eq:134}, \eqref{eq:155}
and \eqref{eq:156}, i.e.
\[
\begin{array}{lcl}
\bar{X}_1 & = &
(\bar{P}_1,\bar{Q}_1)=(\tilde{P}_1-(u+a)\tilde{\mathcal
K}_1,\tilde{Q}_1-(v+b)\tilde{\mathcal K}_1),
\\ \bar{X}_2 & = &
(\bar{P}_2,\bar{Q}_2)=(\tilde{Q}_2-(u+b)\tilde{\mathcal
K}_2,\tilde{R}_2-(v+c)\tilde{\mathcal K}_2),
\\ \bar{X}_3 & = & (\bar{P}_3,\bar{Q}_3)=(\tilde{P}_3-(u+a)\tilde{\mathcal K}_3,\tilde{R}_3-(v+c)\tilde{\mathcal K}_3).
\end{array}
\]
Here $\mathcal K=aP+bQ+cR$,
$\tilde{F}_1(u,v)=F(u+a,v+b,c-(au+bv)/c)$,
$\tilde{F}_2(u,v)=F(a-(bu+cv)/a,u+b,v+c)$ and
$\tilde{F}_3(u,v)=F(u+a,b-(au+cv)/b,v+c)$ and $F\in \mathbb
R[x,y,z]$.

\smallskip\noindent {\it Proof of Theorem \ref{the:13}.} First
we will prove statement $(a)$. Consider the function
\[
\begin{array}{lcl}
f_1(u,v) & = & \displaystyle (u+a)^2+(v+b)^2+\left(c-\frac{au+bv}{c}\right)^2 \\
& = & \displaystyle
\frac{a^2+c^2}{c^2}u^2+\frac{2ab}{c^2}uv+\frac{b^2+c^2}{c^2}v^2+1.
\end{array}
\]
Using the previous notation we have that
\[
\begin{array}{lcl}
\displaystyle \bar{P}_1\frac{\partial f_1}{\partial
u}+\bar{Q}_1\frac{\partial f_1}{\partial v} & = & \displaystyle
\tilde{P}_1\frac{\partial f_1}{\partial
u}+\tilde{Q}_1\frac{\partial f_1}{\partial
v}-2\tilde{\mathcal K}_1(f_1-1)-\frac{2}{c^2}\tilde{\mathcal K}_1(au+bv) \\
& = & \displaystyle -2\tilde{\mathcal
K}_1(f_1-1)+\frac{2}{c^2}(c^2u\tilde{P}_1+c^2v\tilde{Q}_1-c(au+bv)\tilde{R}_1).
\end{array}
\]
Note that, by Proposition \ref{pro:02},
$(u+a)\tilde{P}_1+(v+b)\tilde{Q}_1+(c-(au+bv)/c)\tilde{R}_1=0$,
i.e.
$c^2u\tilde{P}_1+c^2v\tilde{Q}_1-c(au+bv)\tilde{R}_1=-c^2\tilde{\mathcal
K}_1$. Therefore, $\displaystyle \bar{P}_1\frac{\partial
f_1}{\partial u}+\bar{Q}_1\frac{\partial f_1}{\partial
v}=-2\tilde{\mathcal K}_1(f_1-1)-2\tilde{\mathcal
K}_1=-2\tilde{\mathcal K}_1f_1$. Hence, $f_1=0$ is an invariant
algebraic curve of $\bar{X}_1$ and $-2\tilde{\mathcal K}_1$ is its
cofactor.

Using the notation of Theorem \ref{the:10} we assume that $\gamma$
is a periodic orbit of $\bar{X}_1$. Now, if $\tilde{\mathcal K}_1$
does not change sign in $\mathbb R^2$, then
$\int_\gamma\tilde{\mathcal K}_1dt\neq 0$, and since $\{(u,v)\in
\mathbb R^2:\; f_1(u,v)=0\}=\emptyset$, by Theorem \ref{the:10}
$\bar{X}_1$ does not have periodic orbits. Hence, $X$ does not
have periodic orbits on $\mathbb S^2$. This proves statement (a).

Now we will prove statement (b). Note that
\[
\langle \bar{X}_1,\nabla \tilde{\mathcal
K}_1\rangle\mid_{\tilde{\mathcal
K}_1=0}=\left(\tilde{P}_1\frac{\partial \tilde{\mathcal
K}_1}{\partial u}+\tilde{Q}_1\frac{\partial \tilde{\mathcal
K}_1}{\partial v}\right)\mid_{\tilde{\mathcal K}_1=0}=\langle
(\tilde{P}_1,\tilde{Q}_1),\nabla \tilde{\mathcal
K}_1\rangle\mid_{\tilde{\mathcal K}_1=0}.
\]
Therefore, if $\langle (\tilde{P}_1,\tilde{Q}_1),\nabla
\tilde{\mathcal K}_1\rangle\mid_{\tilde{\mathcal K}_1=0}$ does not
change sign, then the vector field $\bar{X}_1$ is transversal to
the curve $\tilde{\mathcal K}_1=0$, so there are no periodic
orbits of $\bar{X}_1$ intersecting the curve $\tilde{\mathcal
K}_1=0$. Hence, if $X$ has a periodic orbit $\gamma$, then $\gamma
\subset \mathbb R^2\setminus \{\tilde{\mathcal K}_1=0\}$. Thus,
$\int_\gamma\tilde{\mathcal K}_1dt\neq 0$, and so by Theorem
\ref{the:10} we get $\gamma \subset \{(u,v)\in \mathbb R^2:\;
f_1(u,v)=0\}=\emptyset$. Therefore, $\bar{X}_1$ does not have
periodic orbits, i.e. $X$ does not have periodic orbits on
$\mathbb S^2$.

The proof of the other statements is similar replacing $f_1$ by
\[
\begin{array}{lcl}
f_2(u,v) & = & \displaystyle (u+b)^2+(v+c)^2+\left(a-\frac{bu+cv}{a}\right)^2 \\
\end{array}
\]
in the statements $(c)$, $(d)$ and by
\[
\begin{array}{lcl}
f_3(u,v) & = & \displaystyle (u+a)^2+(v+c)^2+\left(b-\frac{au+cv}{b}\right)^2 \\
\end{array}
\]
in the statements $(e)$, $(f)$. \hfill $\Box$

>From the proof of Theorem \ref{the:13}, using the previous
notation, we have the following corollaries:

\begin{corollary}
The algebraic curve $f_i=0$ is an invariant algebraic curve of
vector field $\bar{X}_i$ with cofactor $K_{f_i}=-2\tilde{\mathcal
K}_i$, for $i=1,2,3$.
\end{corollary}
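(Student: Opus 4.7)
The plan is to observe that this corollary is essentially a formal repackaging of computations already carried out, in full, inside the proof of Theorem \ref{the:13}. For $i=1$, the proof of Theorem \ref{the:13}(a) established explicitly that
\[
\bar{P}_1\frac{\partial f_1}{\partial u}+\bar{Q}_1\frac{\partial f_1}{\partial v}=-2\tilde{\mathcal K}_1 f_1,
\]
and by the very definition of invariant algebraic curve recalled in the introduction, this identity is exactly the statement that $f_1=0$ is an invariant algebraic curve of $\bar{X}_1$ with cofactor $K_{f_1}=-2\tilde{\mathcal K}_1$. Hence the case $i=1$ is immediate.

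For $i=2,3$ the proof of Theorem \ref{the:13} asserts that the argument is analogous after replacing $f_1$ by $f_2$ or $f_3$. I would verify this by repeating, in each case, the same three-step calculation used for $i=1$. First, compute the two partial derivatives of $f_i$, using the observation that $f_i(u,v)$ is literally $x^2+y^2+z^2$ evaluated at the parameterization of the corresponding affine plane (the one used in \eqref{eq:155} for $i=2$, and the one used in \eqref{eq:156} for $i=3$). Second, substitute the defining expressions for $\bar{X}_2$ and $\bar{X}_3$ and expand. Third, invoke Proposition \ref{pro:02} at the relevant parameterization point, which yields the algebraic identities
\[
\left(a-\frac{bu+cv}{a}\right)\tilde{P}_2+(u+b)\tilde{Q}_2+(v+c)\tilde{R}_2=0
\]
for $i=2$ and
\[
(u+a)\tilde{P}_3+\left(b-\frac{au+cv}{b}\right)\tilde{Q}_3+(v+c)\tilde{R}_3=0
\]
for $i=3$. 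Substituting these into the expanded expression for $\bar{X}_i f_i$ collapses the cross terms involving $\tilde{P}_i,\tilde{Q}_i,\tilde{R}_i$ into multiples of $\tilde{\mathcal K}_i$, leaving exactly $\bar{X}_i f_i=-2\tilde{\mathcal K}_i f_i$, as required.

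The main (and essentially only) obstacle is bookkeeping: the roles of the coordinates $x,y,z$ are permuted from case to case, so one must track carefully which coordinate has been solved for in each parameterization. The underlying algebraic structure, however, is identical in the three cases, and no idea beyond those present in the proof of Theorem \ref{the:13}(a) is needed.
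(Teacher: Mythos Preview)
Your proposal is correct and matches the paper's approach exactly: the paper states the corollary with no separate proof, prefacing it by ``From the proof of Theorem \ref{the:13}\ldots'', since the identity $\bar{X}_i f_i=-2\tilde{\mathcal K}_i f_i$ is precisely what was established there for $i=1$ and asserted to hold analogously for $i=2,3$. Your outlined verification for $i=2,3$ via Proposition \ref{pro:02} is the intended computation.
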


\begin{corollary}
In the assumptions of Theorem \ref{the:13}, if $C$ is a periodic
orbit, then $C$ is the unique periodic orbit of $X$ on $\mathbb
S^2$.
\end{corollary}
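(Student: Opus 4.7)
The plan is to reduce to the planar setting used in the proof of Theorem \ref{the:13} by first confining any hypothetical second periodic orbit to one open hemisphere. Suppose for contradiction that $\gamma\subset\mathbb S^2$ is a periodic orbit of $X$ with $\gamma\neq C$. Because $C$ is itself a trajectory of $X$, uniqueness of solutions of the ODE associated with \eqref{eq:04} forces $\gamma\cap C=\emptyset$, so $\gamma$ lies in one of the open hemispheres $H^{\pm}=\{\pm(ax+by+cz)>0\}$. Using the antipodal symmetry of homogeneous polynomial vector fields on $\mathbb S^2$ recorded at the beginning of Section \ref{sec:05}, I would assume without loss of generality that $\gamma\subset H^{+}$.

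Next I would push the problem down to $\mathbb R^{2}$ through whichever central projection is compatible with the condition (a)--(f) that is in force: \eqref{eq:134} for (a) and (b), \eqref{eq:155} for (c) and (d), or \eqref{eq:156} for (e) and (f). Since $\gamma\subset H^{+}$ avoids the equator $C$, its image $\bar\gamma$ under the relevant projection is a bona fide periodic orbit of the associated planar polynomial vector field $\bar X_{i}$ in $\mathbb R^{2}$. The corollary immediately preceding this one provides the invariant algebraic curve $f_{i}=0$ of $\bar X_{i}$ with cofactor $K_{f_{i}}=-2\tilde{\mathcal K}_{i}$; a direct check shows that $f_{i}$ is a positive definite quadratic form in $(u,v)$ plus the constant $1$, so $\{f_{i}=0\}=\emptyset$ in $\mathbb R^{2}$.

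Finally I would recycle the cofactor argument from the proof of Theorem \ref{the:13}. Under the sign conditions (a), (c), or (e) one has $\int_{\bar\gamma}\tilde{\mathcal K}_{i}\,dt\neq 0$ immediately; under the transversality conditions (b), (d), or (f), the orbit $\bar\gamma$ is disjoint from $\{\tilde{\mathcal K}_{i}=0\}$, so $\tilde{\mathcal K}_{i}$ keeps a constant sign along $\bar\gamma$ and again the integral is nonzero. Either way, Theorem \ref{the:10} forces $\bar\gamma\subset\{f_{i}=0\}=\emptyset$, a contradiction. The one subtlety worth flagging as the main obstacle is verifying that the auxiliary hypothesis of Theorem \ref{the:13} (``$X$ has no periodic orbits intersecting $C$'') is not secretly doing real work in the present setting: that hypothesis was only required in Theorem \ref{the:13} to handle orbits meeting $C$, while here orbit uniqueness together with $C$ itself being an orbit rules out such intersections automatically, so the remainder of the argument of Theorem \ref{the:13} applies verbatim on the hemisphere $H^{+}$, completing the proof.
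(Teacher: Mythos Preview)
Your proposal is correct and is precisely the argument the paper intends: the paper gives no separate proof of this corollary, stating only that it follows ``from the proof of Theorem \ref{the:13},'' and you have spelled out exactly that deduction, including the key observation that when $C$ is itself an orbit, uniqueness of solutions forces any other periodic orbit into an open hemisphere so that the central-projection argument of Theorem \ref{the:13} applies verbatim.
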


We say that an orbit $\gamma$ of a homogeneous polynomial vector
field on $\mathbb S^2$ is {\it convex} if for any $p\in \gamma$
the great circle tangent to $\gamma$ at $p$ intersects $\gamma$
only at $p$, or $\gamma$ is entirely contained in the great
circle.

\begin{proposition}
\label{pro:14} Let $X=(P,Q,R)$ be a quadratic homogeneous
polynomial vector field on $\mathbb S^2$. Any great circle in
$\mathbb S^2$ has at most four tangencies with the orbits of $X$
or is formed by orbits of $X$.
\end{proposition}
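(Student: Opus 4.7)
The plan is to translate tangency between the great circle $C$ and orbits of $X$ into a polynomial condition in three variables, then restrict to the plane of $C$ and count zeros of a binary quadratic form.

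First I would characterize tangencies algebraically. Write $C=\{ax+by+cz=0\}\cap \mathbb S^{2}$ with $a^{2}+b^{2}+c^{2}=1$. At a point $p=(x,y,z)\in C$, the tangent line to $C$ at $p$ is spanned by $(a,b,c)\land p$, since it must be orthogonal to the sphere's outer normal $p$ and to the plane's normal $(a,b,c)$. By Proposition \ref{pro:02}, $X(p)$ is already orthogonal to $p$, so $X$ is tangent to $C$ at $p$ if and only if $X(p)$ is orthogonal to $(a,b,c)$, i.e.
\[
\mathcal K(p):=aP(p)+bQ(p)+cR(p)=0.
\]
Since $P,Q,R$ are homogeneous of degree $2$, $\mathcal K$ is a homogeneous polynomial of degree~$2$.

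Next I would reduce to a binary form. Assume without loss of generality $c\neq 0$ (the cases $a\neq 0$ or $b\neq 0$ are analogous) and substitute $z=-(ax+by)/c$ into $\mathcal K$ to obtain
\[
\widehat{\mathcal K}(x,y):=\mathcal K\!\left(x,\,y,\,-\tfrac{ax+by}{c}\right),
\]
a real homogeneous polynomial of degree $2$ in two variables. Two cases arise. If $\widehat{\mathcal K}\equiv 0$, then $\mathcal K$ vanishes on the entire plane $ax+by+cz=0$; in particular it vanishes at every point of $C$, so $X$ is tangent to $C$ everywhere along $C$, which means $C$ is an invariant great circle and hence is formed by orbits of $X$. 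Otherwise $\widehat{\mathcal K}$ is a nonzero binary quadratic form, so over $\mathbb R$ it factors as a product of at most two linear forms, and its real zero set inside the plane $ax+by+cz=0$ is a union of at most two lines through the origin. Each such line meets the unit circle $C$ in exactly two antipodal points, giving at most $4$ tangency points in total.

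The argument is essentially one observation (identification of tangencies with the vanishing of $\mathcal K$ on $C$) followed by a dimension-reduction substitution, so there is no real obstacle. The only minor care needed is to verify that the degenerate subcases of $\widehat{\mathcal K}$ (a definite form, or a repeated real linear factor) yield $0$ or $2$ tangency points respectively, and thus also respect the bound of $4$.
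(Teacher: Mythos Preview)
Your argument is correct. The paper does not actually supply a proof of this proposition; it simply states that the result can be found in Camacho~\cite{ref:2} or in~\cite{ref:18}, so there is no in-paper proof to compare against. Your approach---identifying tangencies of $X$ to the great circle $\{ax+by+cz=0\}\cap\mathbb S^{2}$ with zeros of $\mathcal K=aP+bQ+cR$ restricted to that plane, and then counting roots of the resulting binary quadratic form---is the natural and standard one; note that the same function $\mathcal K$ already appears in the paper (Theorem~\ref{the:13}) in exactly this role of measuring the normal component of $X$ to the plane of a great circle.
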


\begin{proposition}
\label{pro:17} Let $X$ be a homogeneous polynomial vector field on
$\mathbb S^2$ of degree $2$. Any periodic orbit of $X$ on $\mathbb
S^2$ is convex.
\end{proposition}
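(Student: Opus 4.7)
The plan is to argue by contradiction. Suppose $\gamma$ is a periodic orbit of $X$ that is not convex: by definition there exist distinct points $p,q\in\gamma$ such that $q$ lies on the great circle $C$ tangent to $\gamma$ at $p$, and $\gamma\not\subset C$. First I would observe that $C$ cannot be formed by orbits of $X$, because otherwise the orbit through $p$ (which is $\gamma$) would be contained in $C$, contradicting $\gamma\not\subset C$. Hence by Proposition~\ref{pro:14}, $C$ has at most four tangencies with the orbits of $X$, and the rest of the proof consists in exhibiting at least five tangencies to reach a contradiction.

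Next I would use the antipodal symmetry. Since $X$ is homogeneous of degree $2$ (even), the system is invariant under $(x,y,z,t)\mapsto(-x,-y,-z,-t)$, so $-\gamma$ is also a periodic orbit of $X$. As every great circle is antipodal-symmetric, $-p,-q\in C$ and $C$ is tangent to $-\gamma$ at $-p$. When $\gamma\neq-\gamma$ the orbits $\gamma$ and $-\gamma$ are disjoint, so $p,q,-p,-q$ are four distinct points of $C$ and we immediately have two tangencies of $C$ with orbits (at $p$ and at $-p$); the case $\gamma=-\gamma$ is treated analogously, with the antipodal map acting internally on $\gamma$. To obtain the further tangencies needed for the contradiction, I would analyse the smooth $\tau$-periodic function $\phi(t):=n\cdot\gamma(t)$, where $n$ is a unit normal to the plane of $C$: tangencies of $C$ with $\gamma$ are zeros of $\phi$ of order at least $2$, while transversal crossings are simple zeros. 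By hypothesis $\phi$ has a double zero at $0$ and a further zero at some $t_0$ with $\gamma(t_0)=q$. A case analysis depending on whether $q$ is tangential or transversal, whether the tangency at $p$ is of inflection type, and whether $\gamma=-\gamma$, together with the parity constraint that transversal intersections of a closed curve with a great circle must come in even number, is designed to produce at least one further tangency in $\gamma\cap C$. Combined with the antipodal partners, this yields five or more tangencies of $C$ with orbits of $X$, contradicting Proposition~\ref{pro:14}.

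The main obstacle is the last step. In the subcase where $q$ is a transversal intersection, the parity argument alone only produces further transversal intersections, not further tangencies, so the tangency count of $C$ with orbits stays at two from the antipodal pair at $p,-p$, well below the four allowed by Proposition~\ref{pro:14}. Upgrading these extra transversal crossings into extra tangencies is the technical heart of the proof, and is where the quadratic hypothesis is used essentially: the proof of Proposition~\ref{pro:14} shows that tangencies of $C$ with orbits are cut out on $C$ by the intersection of the plane of $C$ with a quadric cone through the origin, so that tangencies occur in antipodal pairs. Careful use of this antipodal pairing, combined with the flow structure on the hemisphere bounded by $C$ that contains $\gamma$, is what is needed to force a fifth tangency and complete the contradiction.
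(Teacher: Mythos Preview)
The paper does not give its own proof of this proposition: immediately after the statement it writes that Propositions~\ref{pro:14} and~\ref{pro:17} ``also can be found in Camacho \cite{ref:2} or \cite{ref:18}.'' So there is no in-paper argument to compare against; your approach via Proposition~\ref{pro:14} is the natural one and is presumably the approach in those references.

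That said, what you have written is a proof \emph{plan}, not a proof, and you say so yourself: you identify the transversal subcase as ``the main obstacle'' and describe what ``is needed to force a fifth tangency'' without actually doing it. Two concrete difficulties deserve emphasis. First, tangencies of a great circle $C$ with the flow of a degree-$2$ field come in antipodal pairs (the tangency function $r\mapsto n\cdot X(r)$ on $C$ is $\pi$-periodic), so the count is $0$, $2$, or $4$; aiming for ``five or more'' is the wrong target, and in several of your subcases you only reach four, which is exactly the bound and yields no contradiction. Second, the inflection subcase (the tangency at $p$ is a crossing of $C$) is not covered by your parity/topology sketch: if the flow has exactly the two tangencies $p,-p$ on $C$, then $n\cdot X$ has constant sign on $C$, so orbits can only cross $C$ in one direction; a periodic $\gamma$ with an inflection tangency at $p$ and a single transversal return at $q$ is not excluded by tangency counting alone and requires the one-directional-crossing argument together with the disjointness of $\gamma$ and $-\gamma$ in each hemisphere. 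Until those subcases are actually carried out, the argument has a genuine gap.
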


Propositions \ref{pro:14} and \ref{pro:17} also can be found in
Camacho \cite{ref:2} or \cite{ref:18}.

By Proposition \ref{pro:17} we have that in the case that $X$ has
degree $2$ the hypotheses of Theorem \ref{the:13} is not
necessary, i.e. we can suppose that all periodic orbits of $X$ are
contained on the hemisphere determined by the plane $ax+by+cz=0$
such that $(a,b,c)$ belongs it.

\section{Phase portraits for homogeneous polynomial vector fields on $\mathbb S^2$ of degree $2$ having some non--hyperbolic singularity}
\label{sec:06}

Let $p$ be a singularity of a homogeneous polynomial vector field
$X$ on $\mathbb S^2$ of degree $2$. By Proposition \ref{pro:09},
we can suppose without loss of generality that $p=(0,0,-1)$, then
the classification of singularities of $X$ is equivalent to the
classification of singularities of (\ref{eq:115}) at the origin.

Using the notation of the previous section we say that $p$ is a
{\it hyperbolic singularity} of $X$ if the eigenvalues of
$D\widetilde{X}(0,0)$ have real part distinct of zero. We say that
$p$ is a {\it non--degenerate singularity} if $\det(
D\widetilde{X}(0,0))=a_4a_8-a_5a_7\neq 0$, a {\it semi--hyperbolic
singularity} if {\rm tr}$(D\widetilde{X}(0,0))=-a_4-a_8 \neq 0$
and $\det( D\widetilde{X}(0,0))=0$, a {\it nilpotent singularity}
if {\rm tr}$(D\widetilde{X}(0,0))=\det( D\widetilde{X}(0,0))=0$
and $|a_4|+|a_5|+|a_7|+|a_8|\neq 0$, and {\it linearly zero
singularity} if $D\widetilde{X}(0,0)$ is the null matrix.

The next result will be necessary for proving the results of this
section and its proof can be found in \cite{ref:15}.

\begin{proposition}
\label{cor:01} Let $X$ be a homogeneous polynomial vector field of
degree $2$ on $\mathbb S^2$. Suppose that $X$ has at least one
invariant circle on $\mathbb S^2$, then $X$ has no limit cycles.
\end{proposition}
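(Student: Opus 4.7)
The plan is to argue by contradiction, combining the non-existence criteria of Theorems~\ref{the:11} and \ref{the:13} with the algebraic constraint that an invariant circle places on the cofactor of $X$.

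Suppose $X$ has an invariant circle $C$ and a limit cycle $\gamma$. First I would show $\gamma \cap C = \emptyset$. Since $C$ is a union of orbits of $X$, by uniqueness of solutions either $\gamma$ is disjoint from $C$ or $\gamma$ coincides with a periodic-orbit component of $C$. In the latter case, by the antipodal symmetry of even-degree homogeneous systems on $\mathbb{S}^2$ (Section~\ref{sec:05}), $-C$ is also an invariant circle with $-\gamma$ one of its orbits; convexity (Proposition~\ref{pro:17}) together with the fact that $C$ sits in a $1$-parameter family of parallel circles approaching it would then contradict $\gamma$ being isolated. Hence $\gamma$ is disjoint from $C$, and by the Jordan curve theorem on $\mathbb{S}^2$ it lies in one of the two open spherical caps bounded by $C$.

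After a rotation in $SO(3)$, which preserves all properties of $X$, I would assume $C = \{z = d\}\cap \mathbb{S}^2$ for some $|d| < 1$. The invariance of $C$ reads $X(z-d) = K(z-d)$ on $\mathbb{S}^2$, which forces the identity $R - K(z-d) = \alpha(x^2+y^2+z^2-1)$ on all of $\mathbb{R}^3$, with $K$ of degree $1$ and $\alpha \in \mathbb{R}$. Matching this against the homogeneous form \eqref{eq:04} yields explicit algebraic relations on $a_1, \ldots, a_8$, and, crucially, produces a factorization of $\mathcal{K} = aP+bQ+cR = R$ (for the projection pole $(a,b,c) = (0,0,1)$) that is tied to the linear form defining $C$. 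I would then apply Theorem~\ref{the:13}, using $C$ itself as the great circle in case $d = 0$, or a great circle parallel to $C$ sitting in the cap opposite to $\gamma$ when $d \ne 0$; the hypothesis that no periodic orbit crosses the chosen great circle is automatic since $\gamma$ is trapped in the cap containing it. The cofactor factorization should then force one of the sign conditions (a)--(f) of Theorem~\ref{the:13} to hold, producing the desired contradiction.

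The main obstacle is the last step: verifying a sign condition of Theorem~\ref{the:13} from only the invariance of $C$. When $d = 0$ the polynomial $\mathcal{K} = R$ is divisible by $z$ as polynomials in $\mathbb{R}[x,y,z]$, so $\tilde{\mathcal{K}}_1$ inherits an explicit factorization and the sign analysis is direct. When $d \ne 0$, the divisibility holds only modulo $x^2+y^2+z^2 - 1$, and the correction term $\alpha(x^2+y^2+z^2-1)$ complicates the sign behaviour of $\tilde{\mathcal{K}}_i$ on $\mathbb{R}^2$; here one needs a careful case split on the cofactor $K$ and possibly an application of condition (b), (d), or (f) rather than (a), (c), (e). The degree-$2$ hypothesis enters essentially through the fact that the cofactor has degree $1$ and that the correction term in the polynomial identity above is a scalar multiple of $x^2+y^2+z^2-1$, both of which are needed to control the sign of the relevant gradient pairing on $\{\tilde{\mathcal{K}}_i = 0\}$.
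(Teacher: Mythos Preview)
The paper does not prove this proposition; it states that ``its proof can be found in \cite{ref:15}.'' In that companion paper the result is obtained by classifying all phase portraits of quadratic homogeneous vector fields on $\mathbb S^2$ possessing an invariant circle and observing that none contains a limit cycle. So there is no in-paper argument to compare against, and your strategy via Theorems~\ref{the:11}/\ref{the:13} is genuinely different from the classification route.

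That said, your plan has two real gaps. First, the argument excluding $\gamma=C$ is not valid: the ``$1$-parameter family of parallel circles approaching $C$'' consists of circles that are \emph{not} invariant in general, so their existence says nothing about whether $C$ is isolated as a periodic orbit. An invariant circle can, a priori, be a limit cycle, and ruling this out for degree~$2$ requires a separate argument.

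Second, and more seriously, the sign conditions of Theorem~\ref{the:13} do not follow from the mere existence of an invariant circle, even in the simplest case $d=0$. After rotating so that $C=\{z=0\}\cap\mathbb S^2$, invariance forces $a_4=a_8=0$ and $a_5+a_7=0$ in \eqref{eq:04}, hence $R=-z(a_3x+a_6y)$. Taking $(a,b,c)=(0,0,1)$ gives $\tilde{\mathcal K}_1(u,v)=R(u,v,1)=-(a_3u+a_6v)$, which is linear and changes sign, so condition~(a) fails. For condition~(b), restricting $-a_3\tilde P_1-a_6\tilde Q_1$ to the line $a_3u+a_6v=0$ yields (after simplification) $(a_3^2+a_6^2)\bigl(\tfrac{a_1a_6-a_2a_3}{a_3^2}v^2-\tfrac{a_5}{a_3}v-1\bigr)$, a quadratic in $v$ with constant term $-1$; this keeps a fixed sign only when $a_5^2+4(a_1a_6-a_2a_3)<0$, which is not implied by the invariance of $C$. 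So the obstacle you flag is not a technicality but a genuine failure of the method: invariance of a circle does not, by itself, produce any of the sign hypotheses (a)--(f). The classification approach of \cite{ref:15} avoids this by exhausting the finitely many normal forms directly.
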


\smallskip\noindent {\it Proof of Theorem \ref{the:09}.}
Statement $(a)$ is exactly Proposition \ref{pro:11}. Therefore, we
have to prove only statements $(b)$, $(c)$ and $(d)$.

In what follows, by Proposition \ref{pro:09}, we can suppose that
the system associated to $X=(P,Q,R)$ is in the form (\ref{eq:04})
with $a_3=a_6=0$. Therefore, if $R\not \equiv 0$ then the equator
of $\mathbb S^2$, i.e. $\mathbb S^1$ is not invariant by $X$,
because $z$ is not a factor of $R$. Hence, in this case, on the
Poincar\'e disc associated to $\tilde{X}$ we represent $\mathbb
S^1$ by a circle formed by dots.

In the case that $X$ has a nilpotent singularity, to determine its
phase portrait, from the proof of Proposition \ref{pro:10}, it is
sufficient to study the systems
\begin{equation}
\label{eq:132}
\begin{array}{lcl} \dot{x} & = &
a_{1}xy+a_{2}y^2+a_{4}xz-\displaystyle \frac{a_{4}^2}{a_7}yz, \\
\dot{y} & = & = -a_{1}x^2-a_{2}xy+a_{7}xz-a_{4}yz, \\ \dot{z} & =
&
-a_{4}x^2+a_{4}y^2-\left(a_{7}-\displaystyle\frac{a_{4}^2}{a_7}\right)xy,
\end {array}
\end{equation}
with $a_4a_7\neq 0$, and
\begin{equation}
\label{eq:133} \dot{x} = a_{1}xy+a_{2}y^2+a_{5}yz, \;\;\dot{y} =
-a_{1}x^2-a_{2}xy, \;\; \dot{z}  =  -a_{5}xy,
\end{equation}
with $a_5\neq 0$. Now system (\ref{eq:132}) is equivalent to
system (\ref{eq:133})  by the orthogonal linear change of
variables
\[
\left(\begin{array}{c}x
\\ y \\ z \end{array}\right)=\left(\begin{array}{ccc}\displaystyle
\frac{a_4}{\sqrt{a_4^2+a_7^2}} & \displaystyle
-\frac{a_7}{\sqrt{a_4^2+a_7^2}} & 0 \\
\displaystyle \frac{a_7}{\sqrt{a_4^2+a_7^2}} & \displaystyle
\frac{a_4}{\sqrt{a_4^2+a_7^2}} & 0 \\ 0 & 0 & 1
\end{array}\right)\left(\begin{array}{c} \tilde{x}
\\ \tilde{y} \\ \tilde{z} \end{array}\right),
\]
where $a_i$ are the coefficients of system (\ref{eq:132}).
Denoting by $\tilde{a}_i$ the coefficients of system
\eqref{eq:132} in the variables $(\tilde{x},\tilde{y},\tilde{z})$,
we have that $ \tilde{a}_1=(a_1a_4+a_2a_7)/\sqrt{a_4^2+a_7^2}$,
$\tilde{a}_2=(a_2a_4-a_1a_7)/\sqrt{a_4^2+a_7^2}$,
$\tilde{a}_5=-(a_4^2+a_7^2)/a_7$. Note that $\tilde{a}_5\neq 0$.
Therefore, we have to study only system (\ref{eq:133}) or
equivalently system (\ref{eq:121}) induced from it by the central
projection. We distinguish two cases.

\smallskip\noindent {\it Case 1:} $a_{1}\neq 0$. By the proof
of Proposition \ref{pro:10} we have that $v=\varphi (u)\equiv 0$
is the solution of $v+P_2(u,v)=0$. Hence, $\psi (u)=Q_2(u,\varphi
(u))=\displaystyle a_{1}u^2/a_5$ and $\displaystyle \frac{\partial
P_2}{\partial u}(u,\varphi (u))+\displaystyle \frac{\partial
Q_2}{\partial v}(u,\varphi (u))=\frac{a_2}{a_5}u$. Then, by
Theorem $67$ of page $362$ of \cite{ref:7} it follows that $(0,0)$
is a cusp.

In the case $a_2\neq 0$ the vector field $X$ determined by system
(\ref{eq:133}) does not have singularities on $\mathbb S^1$.
Moreover, $X$ has four singularities $(0,0,\pm 1)$,
$(0,a_5/(\sqrt{a_2^2+a_5^2}),\linebreak
-a_2/(\sqrt{a_2^2+a_5^2}))$ and
$(0,-a_5/(\sqrt{a_2^2+a_5^2}),a_2/(\sqrt{a_2^2+a_5^2}))$
corresponding to the singularities $(0,0)$ and
$\left(0,a_5/a_2\right)$ of (\ref{eq:121}). We have that the
eigenvalues associated to $\left(0,a_5/a_2\right)$ are $(-a_1\pm
\sqrt{a_1^2-4(a_2^2+a_5^2)})/2a_2$. Thus, $\left(0,a_5/a_2\right)$
can be a node or a focus of (\ref{eq:121}). Now, by the orthogonal
linear change of variables
\[
\left(\begin{array}{c} x \\
y \\ z
\end{array}\right)=\left(\begin{array}{ccc} 1 & 0 & 0
\\ 0 & \displaystyle \frac{a_2}{\sqrt{a_2^2+a_5^2}} & \displaystyle -\frac{a_5}{\sqrt{a_2^2+a_5^2}} \\
0 & \displaystyle \frac{a_5}{\sqrt{a_2^2+a_5^2}} & \displaystyle
\frac{a_2}{\sqrt{a_2^2+a_5^2}} \end{array}
\right)\left(\begin{array}{c} \tilde{x} \\ \tilde{y}
\\ \tilde{z} \end{array}\right),
\]
the system associated to $X$ becomes
\begin{equation}
\label{eq:154}
\begin{array}{lcl}
\dot{\tilde{x}} & = &
P(\tilde{x},\tilde{y},\tilde{z})=\displaystyle
\frac{a_1a_2}{\sqrt{a_2^2+a_5^2}}\tilde{x}\tilde{y}-\frac{a_1a_5}{\sqrt{a_2^2+a_5^2}}\tilde{x}\tilde{z}+
a_2\tilde{y}^2-a_5\tilde{y}\tilde{z}, \\
\dot{\tilde{y}} & = & Q(\tilde{x},\tilde{y},\tilde{z})= -\displaystyle \frac{a_1a_2}{\sqrt{a_2^2+a_5^2}}\tilde{x}^2-a_2\tilde{x}\tilde{y}+a_5\tilde{x}\tilde{z}, \\
\dot{\tilde{z}} & = & R(\tilde{x},\tilde{y},\tilde{z})=
\displaystyle \frac{a_1a_5}{\sqrt{a_2^2+a_5^2}}\tilde{x}^2.
\end{array}
\end{equation}
Note that in these coordinates the singularity
$(0,a_5/(\sqrt{a_2^2+a_5^2}),-a_2/(\sqrt{a_2^2+a_5^2}))$
correspond to the point $(0,0,-1)$. Hence, using the notation of
Section \ref{sec:09}, we have that
$\tilde{R}(u,v)=R(2u,2v,u^2+v^2-1)=
4a_1a_5/\sqrt{a_2^2+a_5^2}u^2$. Thus, $\tilde{R}$ does not change
sign and by Theorem \ref{the:11}, $X$ does not have periodic
orbits on $\mathbb S^2$. So the phase portrait of $\tilde{X}$ on
the Poincar\'e disc is equivalent to one of Figure \ref{figura31}.

\begin{figure}[ptb]
\begin{center}
\includegraphics[height=0.8in,width=2in]{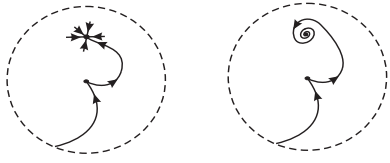}
\end{center}
\caption{Phase portrait of Case $1$: $a_{1}a_2\neq 0$.}
\label{figura31}
\end{figure}

In the case $a_2=0$ the vector field $X$ has four singularities
given by $(0,0,\pm 1)$ and $(0,\pm 1,0)$. The eigenvalues
associated to the last two singularities are $\displaystyle 0,\;(
a_1\pm\sqrt{a_1^2-4a_5^2})/2$ and $\displaystyle 0,\;-(
a_1\pm\sqrt{a_1^2-4a_5^2})/2$, respectively. Therefore, these
singularities can be nodes or foci. Now, substituting $a_2=0$ in
the above change of coordinates we obtain that in this case the
system associated to $X$ is equivalent to system \eqref{eq:154}
with $a_2=0$. Therefore, by the previous argument, we have that
$X$ does not have limit cycles on $\mathbb S^2$. Thus, the phase
portrait of $\tilde{X}$ on the Poincar\'e disc is equivalent to
the one of Figure \ref{figura32}.
\begin{figure}[ptb]
\begin{center}
\includegraphics[height=0.8in,width=1.8in]{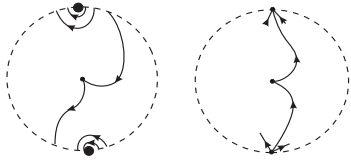}
\end{center}
\caption{Phase portrait of Case $1$: $a_{1}\neq 0$ and $a_2= 0$.}
\label{figura32}
\end{figure}

\smallskip\noindent {\it Case 2:} $a_{1}=0$. In this case we
have that (\ref{eq:121}) has the straight line of singularities
$v=0$ and, so $X$ has an invariant circle formed by singularities
determined by the intersection of the plane $y=0$ with $\mathbb
S^2$. Moreover, $H(x,y,z)=\displaystyle a_5y-a_2z$ is a first
integral of $X$ (note that $a_5\neq 0$) and $X$ has other two
singularities $(0,a_5, - a_2)/(a_2^2+a_5^2)$ and $(0,-a_5,
a_2)/(a_2^2+a_5^2)$. Then its phase portrait is equivalent to the
one of Figure \ref{figura2} $(a)$ if $a_2\neq 0$, or to the one
Figure \ref{figura2} $(b)$ if $a_2=0$. Hence, statement $(b)$ is
proved.

\begin{figure}[ptb]
\begin{center}
\psfrag{A}[l][B]{$(a)$} \psfrag{B}[l][B]{$(b)$}
\includegraphics[height=0.9in,width=2in]{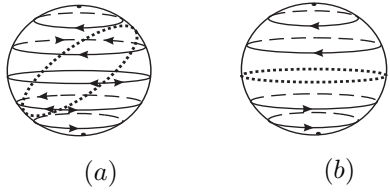}
\end{center}
\caption{Phase portrait of Case $2$.} \label{figura2}
\end{figure}

\smallskip Now, we prove statement $(c)$. By Theorem
\ref{the:08}, we can suppose that $X$ is in the form (\ref{eq:04})
with $a_3=a_6=0$, $a_8=-a_4$, $a_4^2+a_7a_5<0$ and
$a_4(a_1^2-a_2^2)+a_1a_2(a_5+a_7)=0$. Moreover, by the proof of
Theorem \ref{the:08}, for characterizing the phase portrait of $X$
we have to study only systems (\ref{eq:123}) and (\ref{eq:125}).
Therefore, we distinguish the following two cases.

\smallskip\noindent {\it Case 3:} $a_{1}=0$ and $a_2\neq 0$.
Then $a_4=0$. This case correspond to system \eqref{eq:123}. First
we suppose that $a_7\neq -a_5$. In this case $X$ has only two
singularities on $\mathbb S^1$, given by $(\pm 1, 0,0)$ having
eigenvalues $\{0, -(a_2\pm \sqrt{a_2^2-4(a_7^2+a_5a_7)})/2\}$ and
$\{0,(a_2\pm \sqrt{a_2^2-4(a_7^2+a_5a_7)})/2\}$, respectively. The
system induced by $X$ through the central projection, i.e. system
(\ref{eq:123}) has a center in $(0,0)$ and another singularity
$\left(0, a_5/a_2\right)$ with eigenvalues $ \pm
\sqrt{-(a_2^2+a_5^2)(a_5^2+a_5a_7)}/a_2$. Note that
\begin{equation}
\label{eq:127} (a_5^2+a_5a_7)(a_7^2+a_5a_7)=(a_5+a_7)^2a_5a_7<0,
\end{equation}
because $a_5a_7<0$ and $a_5+a_7\neq 0$. Therefore, if the
singularities on $\mathbb S^1$ are saddles, then
$\left(0,\displaystyle a_5/a_2 \right)$ is a center because system
$(\ref{eq:123})$ is invariant with respect to the change of
variables $(u,v,t)\mapsto (-u,v,-t)$. Now if
$\left(0,\displaystyle a_5/a_2 \right)$ is a saddle, then the
singularities of $X$ on $\mathbb S^1$ can be nodes or foci. Note
that if $a_2^2-4(a_7^2+a_5a_7)\geq 0$ then $\{f_{\pm}=0\}\cap
\mathbb S^2$, where
$f_{\pm}(x,y,z)=y+2a_7z/(-a_2\pm\sqrt{a_2^2-4(a_7^2+a_5a_7)})$,
are invariant circles of $X$ on $\mathbb S^2$ with cofactors $
K_{\pm}(x,y,z)=(-a_2\pm\sqrt{a_2^2-4(a_7^2+a_5a_7)})x/2$.
Therefore, in the case that the singularities of $X$ on $\mathbb
S^1$ are either saddles or nodes, then $X$ has at least an
invariant circle on $\mathbb S^2$. Hence, by Corollary
\ref{cor:01} $X$ has no limit cycles on $\mathbb S^2$. Thus, in
this case, the phase portrait of $X$ on the Poincar\'e disc is
equivalent to the one of Figures \ref{figura33} $(a)$ or $(b)$ on
$\mathbb S^2$.

Now, we consider the planar vector field induced by $X$ through
the stereographic projection (\ref{eq:136}) at the point (0,0,1),
i.e. we consider system \eqref{eq:139} which in this case becomes
\begin{equation}
\label{eq:141}
\begin{array}{lcl}
\dot{u} & = & -2a_5v+4a_2v^2-2(a_5+2a_7)u^2v+2a_5v^3, \\
\dot{v} & = & -2a_7u-4a_2uv-2(a_7+2a_5)uv^2+2a_7u^3.
\end{array}
\end{equation}
This system has an inverse integrating factor (see Remark
\ref{ob:03}) given by $V(u,v)=(u^2+v^2+1)V_2(u,v)$, where $
V_2(u,v)=a_7u^4+2(a_7(v^2-1)-a_2v)u^2+a_7(1+v^2)^2+2v(2a_5v+a_2(1-v^2))$.
Note that que the roots of $V_2=0$ in $u$ are
\[
u=\pm\frac{\sqrt{a_7(a_7v^2+a_2v+a_7\pm|v|\sqrt{a_2^2-4(a_7^2+a_5a_7)})}}{a_7}.
\]
Therefore, if $a_2^2-4(a_7^2+a_5a_7)<0$, $V_2$ vanish only on the
points $(\pm 1,0)$. Hence, by Remark \ref{ob:03}, system
\eqref{eq:141} has a first integral defined on $\mathbb
R^2\setminus \{(\pm 1,0)\}$. Hence, system \eqref{eq:141} does not
have limit cycles. Therefore, if the singularities of $X$ on
$\mathbb S^1$ are foci, then the phase portrait of $X$ on the
Poincar\'e disc is equivalent to Figure \ref{figura33} $(c)$.

\begin{figure}[ptb]
\begin{center}
\psfrag{A}[l][B]{$(a)$} \psfrag{B}[l][B]{$(b)$}
\psfrag{C}[l][B]{$(c)$}
\includegraphics[height=1in,width=2.8in]{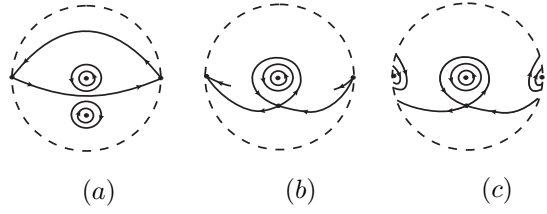}
\end{center}
\caption{Phase portrait of Case $3$: $a_{1}=a_4=0$, $a_2\neq 0$
and $a_7\neq -a_5$.} \label{figura33}
\end{figure}

Now if $a_7=-a_5$, then the system associated to $X$ becomes
\[
\dot{x}=y(a_2y+a_5z),\;\;\dot{y}=-x(a_2y+a_5z),\;\;\dot{z}=0.
\]
Therefore, the phase portrait of $X$ on $\mathbb S^2$ in this case
is equivalent to Figure \ref{figura2} $(a)$.

\smallskip\noindent {\it Case 4:} $a_{1}=a_2=0$. This case
correspond to system $(\ref{eq:125})$. We claim that we can reduce
to the case $a_4= 0$. Now we prove the claim. In fact, consider
the system associated to $X$ on $\mathbb S^2$, i.e. system
(\ref{eq:04}) with $a_{1}=a_2=a_3=a_6=0$, $a_8=-a_4$ and
$a_4^2+a_7a_5<0$. If $a_4\neq 0$ and $a_5+a_7\neq 0$, by the
orthogonal linear change of variables
\[
\left(\begin{array}{c} x \\ y
\\ z \end{array}\right)=\left(\begin{array}{ccc}

a & b & 0
\\  d &
e & 0 \\
0 & 0 & 1 \end{array} \right)\left(\begin{array}{c} \tilde{x} \\
\tilde{y}
\\ \tilde{z} \end{array}\right),
\]
where $a=a_4\sqrt{2\sigma}/((a_5+a_7)\sqrt{\delta\beta})$,
$b=(a_5(a_5+a_7)\delta-(2a_4^2+a_5(a_5+a_7))\sqrt{\sigma})/\sqrt{\delta\beta\gamma}$,
$d=-\sqrt{\beta}/\sqrt{2\delta}$,
$e=-(a_4((a_5+a_7)(4a_4^2+a_7(a_5+a_7)+a_5(a_5+a_7))+(a_7-a_5)\sqrt{\sigma}))\sqrt{\delta
\beta\gamma}$, $ \delta=(a_5+a_7)^2+4a_4^2$,
$\sigma=(a_5+a_7)^2\delta,\;\beta=\delta-\sqrt{\sigma}$, $
\gamma=(a_5+a_7)^2(a_5^2+a_7^2+2a_4^2)+(a_7^2-a_5^2)\sqrt{\sigma}$,
we obtain $\tilde{a}_4=0$. Here $\tilde{a}_i$ denote the
coefficients of system associated to $X$ on the variables
$(\tilde{x},\tilde{y},\tilde{z})$. Note that in this case $\delta>
0$, $\sigma>0$ and as
$(\delta-\sqrt{\sigma})(\delta+\sqrt{\sigma})=4a_4^2\delta$, $
((a_5+a_7)^2(a_5^2+a_7^2+2a_4^2)+(a_7^2-a_5^2)\sqrt{\sigma})\cdot((a_5+a_7)^2(a_5^2+a_7^2+2a_4^2)-(a_7^2-a_5^2)\sqrt{\sigma})=
4(a_5+a_7)^4(a_4^2+a_5a_7)^2$, it follows that $\beta>0$,
$\gamma>0$. Moreover, we have that
$\tilde{a}_1=\tilde{a}_2=\tilde{a}_3=\tilde{a}_4=\tilde{a}_6=\tilde{a}_8=0$,
$\tilde{a}_5=-\sqrt{2}(a_5+a_7)(a_5a_7+a_4^2)/\sqrt{\gamma}$ and
$\tilde{a}_7=-\sqrt{\gamma}/(\sqrt{2}(a_5+a_7))$. Note that
$\tilde{a}_5\tilde{a}_7<0$.

Now, if $a_4\neq 0$ and $a_7=-a_5$, then $a_4^2-a_5^2<0$ and doing
the orthogonal linear change of variables
\[
\left(\begin{array}{c} x \\ y
\\ z \end{array}\right)=\left(\begin{array}{ccc}
\displaystyle \frac{\sqrt{2a_4^2}}{2a_4} & \displaystyle
\frac{a_4+a_5}{\sqrt{2(a_5+a_4)^2}} & 0
\\  \displaystyle \frac{\sqrt{2a_4^2}}{2a_4} &
\displaystyle
-\frac{a_4+a_5}{\sqrt{2(a_5+a_4)^2}} & 0 \\
0 & 0 & 1 \end{array} \right)\left(\begin{array}{c} \tilde{x} \\
\tilde{y}
\\ \tilde{z} \end{array}\right)
\]
we have $\tilde{a}_5=a_4(a_4^2-a_5^2)/\sqrt{a_4^2(a_5+a_4)^2}$,
$\tilde{a}_7=\sqrt{a_4^2(a_5+a_4)^2}/a_4$,
$\tilde{a}_1=\tilde{a}_2=\tilde{a}_3=\tilde{a}_4=\tilde{a}_6=\tilde{a}_8=0$.
Note that $\tilde{a}_5\tilde{a}_7<0$. In short, the claim is
proved.

Now we study the case $a_4=0$ and $a_5+a_7\neq 0$, $X$ has six
singularities on $\mathbb S^2$ given by $(\pm 1,0,0)$, $(0,\pm
1,0)$ and $(0,0,\pm 1)$. The eigenvalues associated to the
singularities $(\pm 1,0,0)$ and $(0,\pm 1,0)$ are $\{0,\pm
\sqrt{-(a_7^2+a_5a_7)}\}$ and $\{ 0,\pm \sqrt{-(a_5^2+a_5a_7)}\}$,
respectively. As $a_5a_7<0$, from (\ref{eq:127}) in this case if
two of these opposite singularities are saddles the other two have
complexes eigenvalues and they are foci or centers. In fact, they
are centers. Suppose that $-(a_5^2+a_5a_7)<0$. By the change of
variables $(x,y,z)\mapsto (x,z,y)$ the system associated to $X$
becomes
\begin{equation}
\label{eq:152} \dot{x}  =  a_5zy, \;\; \dot{y}  =  -(a_5+a_7)zx,
\;\; \dot{z}  =  a_7yx.
\end{equation}
By Theorem \ref{the:08}, the singularities $(0,0,\pm 1)$ are
centers of the above system. Hence, $(0,\pm 1,0)$ are centers of
$X$. Similarly if $-(a_7^2+a_5a_7)<0$, by the change of variables
$(x,y,z)\mapsto (z,y,x)$, we conclude that $(\pm 1,0,0)$ are
centers of $X$. Hence, the phase portrait of $X$ on the Poincar\'e
disc is equivalent to Figure \ref{figura35}.

If $a_4=0$ and $a_7=-a_5$, then the system associated to $X$
becomes
\[
\dot{x}=a_5yz,\;\;\dot{y}=-a_5xz,\;\;\dot{z}=0.
\]
Therefore, the phase portrait of $X$ on $\mathbb S^2$ in this case
is equivalent to Figure \ref{figura2} $(b)$.

We note that the study of system \eqref{eq:152} is called by some
authors (see \cite{ref:20}) the Euler Problem.

\begin{figure}[ptb]
\begin{center}
\includegraphics[height=0.8in,width=0.8in]{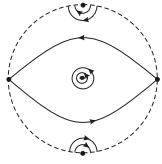}
\end{center}
\caption{Phase portrait of Case $4$: $a_{1}=a_2=0$ and $a_4=0$.}
\label{figura35}
\end{figure}

Now we prove statement $(d)$. If $X$ has a semi--hyperbolic
singularity then, by definition, for determining its phase
portrait we have to study system (\ref{eq:115}) with $a_4+a_8\neq
0$ and $a_4a_8-a_5a_7=0$. We can divided the proof in three cases.
First case $a_4=a_5=0$ and so $a_8\neq 0$. Second case $a_4=a_7=0$
and so $a_8\neq 0$. Third case $a_8=a_5a_7/a_4$ and $a_4\neq
-a_8$. However, we need study only the first case because the last
two cases are equivalent to it by a rotation of the variables. In
fact, if the coefficients of $X$ satisfy the second or the third
cases, then by the orthogonal linear change of variables
$(x,y,z)=M_i (\tilde{x},\tilde{y},\tilde{z})$, $i=1,2$, where
\[
M_1=\left(\begin{array}{ccc}\displaystyle
-\frac{a_8}{\sqrt{a_5^2+a_8^2}} & \displaystyle
\frac{a_5}{\sqrt{a_5^2+a_8^2}} & 0 \\
\displaystyle \frac{a_5}{\sqrt{a_5^2+a_8^2}} & \displaystyle
\frac{a_8}{\sqrt{a_5^2+a_8^2}} & 0 \\ 0 & 0 & 1
\end{array}\right)
\]
and
\[
M_2=\left(\begin{array}{ccc}\displaystyle
-\frac{a_7}{\sqrt{a_7^2+a_4^2}} & \displaystyle
\frac{a_4}{\sqrt{a_7^2+a_4^2}} & 0 \\
\displaystyle \frac{a_4}{\sqrt{a_7^2+a_4^2}} & \displaystyle
\frac{a_7}{\sqrt{a_7^2+a_4^2}} & 0 \\ 0 & 0 & 1
\end{array}\right),
\]
the coefficients of $X$ in the variables
$(\tilde{x},\tilde{y},\tilde{z})$ satisfy the first case,
respectively. Thus, we suppose that $a_4=a_5=0$ and so $a_8\neq
0$. We distinguish the following four cases.

\smallskip\noindent {\it Case 5:} $a_{1}a_7\neq 0$. We have
that $X$ has only four singularities on $\mathbb S^2$, i.e.
$(0,0,\pm 1)$,\linebreak
$(a_7/\sqrt{a_7^2+a_1^2},0,a_1/\sqrt{a_7^2+a_1^2})$ and
$(-a_7/\sqrt{a_7^2+a_1^2},0,-a_1/\sqrt{a_7^2+a_1^2})$. These
singularities correspond to the singularities $(0,0)$ and
$(-a_7/a_1,0)$ of system (\ref{eq:115}). Doing the change of
variables $u=-a_8r/a_7$, $v=r+s$ and introducing the new time
variable $\tau$ through $d\tau=-a_8dt$ system (\ref{eq:115}) can
be written as
\begin{equation}
\label{eq:128} \dot{r}=P_2(r,s),\;\;\dot{s}=s+Q_2(r,s).
\end{equation}
If $a_2a_7-a_1a_8\neq 0$, we have that $\varphi
(r)=((a_2a_7-a_1a_8)(a_7^2+a_8^2))r^2/(a_7^2a_8^2)+\cdots$ is the
solution of $s+Q_2(r,s)=0$. Hence, $\psi (r)=P(r,\varphi
(r))=(a_2a_7-a_1a_8)r^2/a_8^2+\cdots$. Thus, by Theorem $65$ of
page $340$ of \cite{ref:7}, we have that $(0,0)$ is a saddle--node
of system (\ref{eq:128}). Now, the trace and the determinant of
the linear part from system (\ref{eq:115}) at $(-a_7/a_1,0)$ are
$(a_2a_7-a_1a_8)/a_1$ and $a_7^2(a_7^2+a_1^2)/a_1^2$,
respectively. Therefore, $(-a_7/a_1,0)$ can be a node or a focus
of system (\ref{eq:115}). Now, by the orthogonal linear change of
variables
\[
\left(\begin{array}{c} x \\
y \\ z
\end{array}\right)=\left(\begin{array}{ccc} 0 & \displaystyle \frac{a_1}{\sqrt{a_1^2+a_7^2}} & \displaystyle \frac{a_7}{\sqrt{a_1^2+a_7^2}}
\\ 1 & 0 & 0 \\
0 & -\displaystyle \frac{a_7}{\sqrt{a_1^2+a_7^2}} & \displaystyle
\frac{a_1}{\sqrt{a_1^2+a_7^2}} \end{array}
\right)\left(\begin{array}{c} \tilde{x} \\ \tilde{y}
\\ \tilde{z} \end{array}\right),
\]
the system associated to $X$ becomes
\[
\begin{array}{lcl}
\dot{\tilde{x}} & = &
P(\tilde{x},\tilde{y},\tilde{z})=\displaystyle
-\frac{a_1a_2+a_7a_8}{\sqrt{a_1^2+a_7^2}}\tilde{x}\tilde{y}-\frac{a_2a_7-a_1a_8}{\sqrt{a_1^2+a_7^2}}\tilde{x}\tilde{z}-
a_1\tilde{y}^2-a_7\tilde{y}\tilde{z}, \\
\dot{\tilde{y}} & = & Q(\tilde{x},\tilde{y},\tilde{z})=
\displaystyle
\frac{a_1a_2+a_7a_8}{\sqrt{a_1^2+a_7^2}}\tilde{x}^2+a_1\tilde{x}\tilde{y}+
a_7\tilde{x}\tilde{z}, \\
\dot{\tilde{z}} & = & R(\tilde{x},\tilde{y},\tilde{z})=
\displaystyle \frac{a_2a_7-a_1a_8}{\sqrt{a_1^2+a_7^2}}\tilde{x}^2.
\end{array}
\]
Note that in these coordinates the singularity
$(-a_7/(\sqrt{a_1^2+a_7^2}),0,-a_1/(\sqrt{a_1^2+a_7^2}))$
corresponds to the point $(0,0,-1)$. Hence, using the notation of
Section \ref{sec:09}, we have that
$\tilde{R}(u,v)=R(2u,2v,u^2+v^2-1)=
4(a_2a_7-a_1a_8)/\sqrt{a_1^2+a_7^2}u^2$. Hence, $\tilde{R}$ does
not change sign and by Theorem \ref{the:11}, $X$ does not have
limit cycles on $\mathbb S^2$. So the phase portrait of $X$ on the
Poincar\'e disc is equivalent to one of Figure \ref{figura36}.
Note that the separatrices of the hyperbolic sectors of the
diametrally opposite saddles--nodes can be connect or not, see
Figures $5$ and $6$ of \cite{ref:15}.

\begin{figure}[ptb]
\begin{center}
\includegraphics[height=0.8in,width=2in]{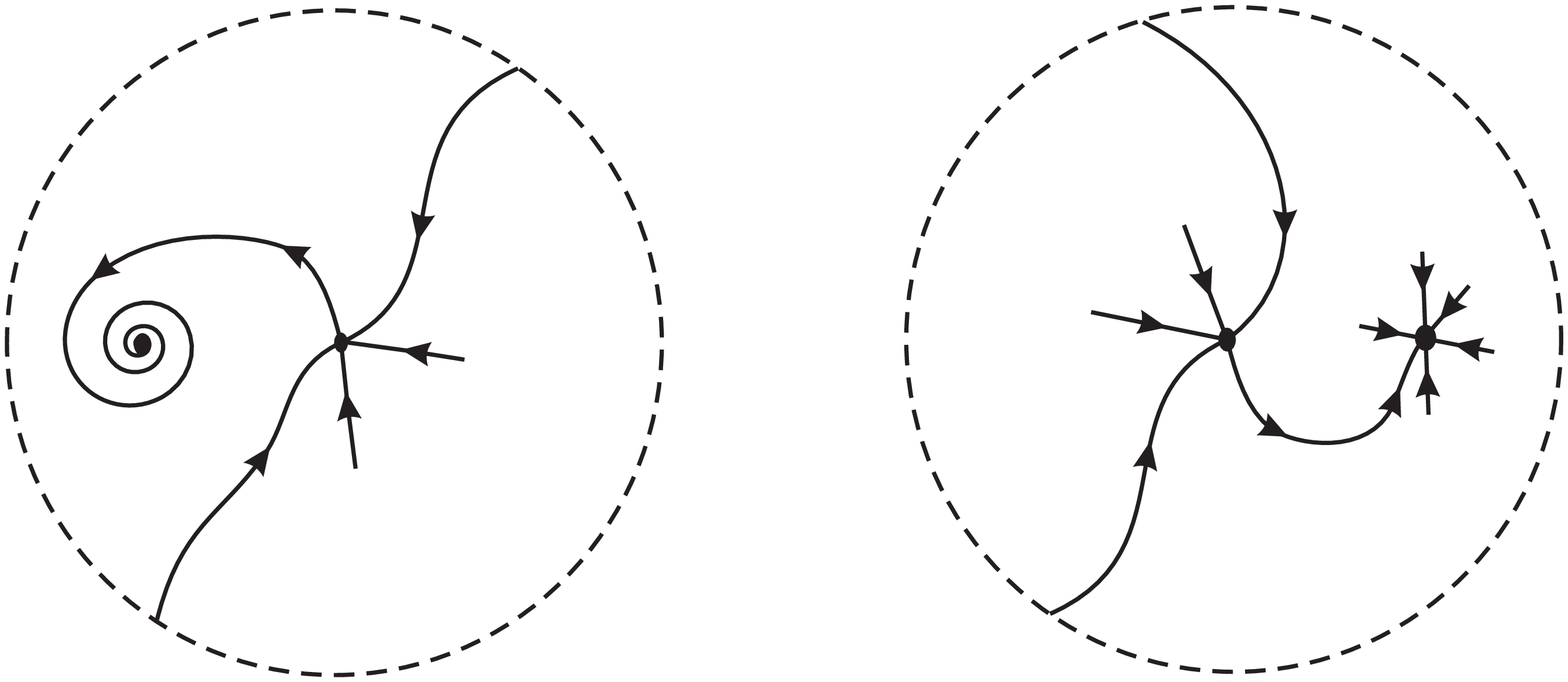}
\end{center}
\caption{Phase portrait of Case $5$: $a_{1}a_7\neq 0$ and
$a_7a_2-a_8a_1\neq 0$.} \label{figura36}
\end{figure}

Now if $a_7a_2-a_8a_1=0$, system (\ref{eq:128}) has a straight
line of singularities passing though the origin. Note that
$H(x,y,z)=a_7x+a_1z$ is a first integral of $X$ and $\{H=0\}\cap
\mathbb S^2$ is a circle of singularities. So the phase portrait
of $X$ is equivalent to Figure \ref{figura2} $(b)$.

\smallskip\noindent {\it Case 6:} $a_7=0$ and $a_1\neq 0$. So
$(0,0,\pm 1)$ are the unique singularities of $X$ on $\mathbb
S^2$. Introducing the new time variable $\tau$ through $d\tau
=-a_8dt$ system (\ref{eq:115}) can be written in the form
\[
\dot{u} = P_2(u,v),\;\;\dot{v}  = v+Q_2(u,v).
\]
We have that $v=\varphi (u)=-a_1u^2/2a_8+\cdots$ is the solution
of $v+Q_2(u,v)=0$. Hence, $\psi (u)=P(u,\varphi
(u))=a_1^2u^2/a_8^2+\cdots$. Thus, by Theorem $65$ of page $340$
of \cite{ref:7}, we have that $(0,0)$ is a topological node of
system (\ref{eq:115}).

Now we consider the planar vector field induced by $X$ through the
stereographic projection of (\ref{eq:136}) at the point $(0,0,1)$,
i.e. we consider system (\ref{eq:139}) which in this case becomes
\begin{equation}
\label{eq:146} \dot{r} = 4a_1rs+4a_2s^2-4a_8rs^2, \;\; \dot{s}  =
 -2a_8s-4a_1r^2-4a_2rs+2a_8r^2s-2a_8s^3.
\end{equation}
Since $\dot{s}\mid_{s=0}=-4a_1r^2$, the orbits of vector field
determined by system (\ref{eq:146}) intersect the straight line
$s=0$ always in a unique direction except at the point
$(r,s)=(0,0)$, i.e $s=0$ is transversal to the flow of system
(\ref{eq:146}) except at the origin. Thus, $X=(P,Q,R)$ does not
have limit cycles on $\mathbb S^2$. We also can apply Theorem
\ref{the:11} of Section \ref{sec:09} to give another proof of this
fact. Then, it is sufficient to see that
$\tilde{R}(u,v)=R(2u,2v,u^2+v^2-1)=-4a_8v^2$ does not change sign.
Hence, the phase portrait of $X$ on the Poincar\'e disc is
equivalent to Figure \ref{figura37}.

\begin{figure}[ptb]
\begin{center}
\includegraphics[height=0.8in,width=0.8in]{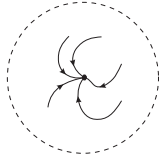}
\end{center}
\caption{Phase portrait of Case $6$: $a_7=0$ and $a_1\neq 0$.}
\label{figura37}
\end{figure}

\smallskip\noindent {\it Case 7:} $a_7=0$ and $a_1=0$. We have
that $\{y=0\}\cap \mathbb S^2$ is a circle of singularities of
$X$. As $H(x,y,z)=a_8x+a_2z$ is a first integral of $X$, it
follows that phase portrait of $X$ is equivalent to Figure
\ref{figura2} $(a)$.

\smallskip\noindent {\it Case 8:} $a_7\neq 0$ and $a_1=0$.
Assume that $a_2\neq 0$, then $X$ has only four singularities on
$\mathbb S^2$, $(0,0,\pm 1)$ and $(\pm 1,0,0)$. The singularities
$(0,0,\pm 1)$ correspond to the singularity $(0,0)$ of system
(\ref{eq:115}). Doing the change of variables $u=-a_8r/a_7$,
$v=r+s$ and introducing the new time variable $\tau$ through
$d\tau=-a_8dt$ system (\ref{eq:115}) can be written
\begin{equation}
\label{eq:129} \dot{r} =P_2(r,s), \;\; \dot{s}  =  s+Q_2(r,s).
\end{equation}
Since $a_2\neq 0$ we have that $s=\varphi
(r)=(a_2(a_7^2+a_8^2))r^2/(a_7a_8^2)+\cdots$ is the solution of
$s+Q_2(r,s)=0$. Hence, $\psi (r)=P(r,\varphi
(r))=(a_7a_2)r^2/a_8^2+\cdots$. Thus, by Theorem $65$ of page
$340$ of \cite{ref:7}, we have that $(0,0)$ is a saddle--node of
system (\ref{eq:129}). Now, the singularities $(\pm 1,0,0)$ have
eigenvalues $\{0,-(a_2\pm \sqrt{a_2^2-4a_7^2})/2$ and $\{0,(a_2\pm
\sqrt{a_2^2-4a_7^2})/2$, respectively. Therefore, $(\pm 1,0,0)$
can be nodes or foci of $X=(P,Q,R)$ restricted to $\mathbb S^2$.
Now, using the notation of Section \ref{sec:09} and doing the
change of variables $(x,y,z)\longmapsto (z,y,x)$, we have that $X$
becomes $(\mathcal{P},\mathcal{Q},\mathcal{R})=(R(z,y,x),
Q(z,y,x), P(z,y,x))$ and so
$\tilde{\mathcal{R}}(u,v)=P(u^2+v^2-1,2v,2u)=4a_2v^2$. Hence,
$\tilde{\mathcal{R}}$ does not change sign and by Theorem
\ref{the:11}, $X$ does not have limit cycles on $\mathbb S^2$. So
the phase portrait of $X$ on the Poincar\'e disc is equivalent to
one of Figures \ref{figura38}. Note that the separatrices of the
hyperbolic sectors of the diametrally opposite saddles--nodes can
be connect or not, see Figures $5$ and $6$ of \cite{ref:15}.

\begin{figure}[ptb]
\begin{center}
\includegraphics[height=1.8in,width=2in]{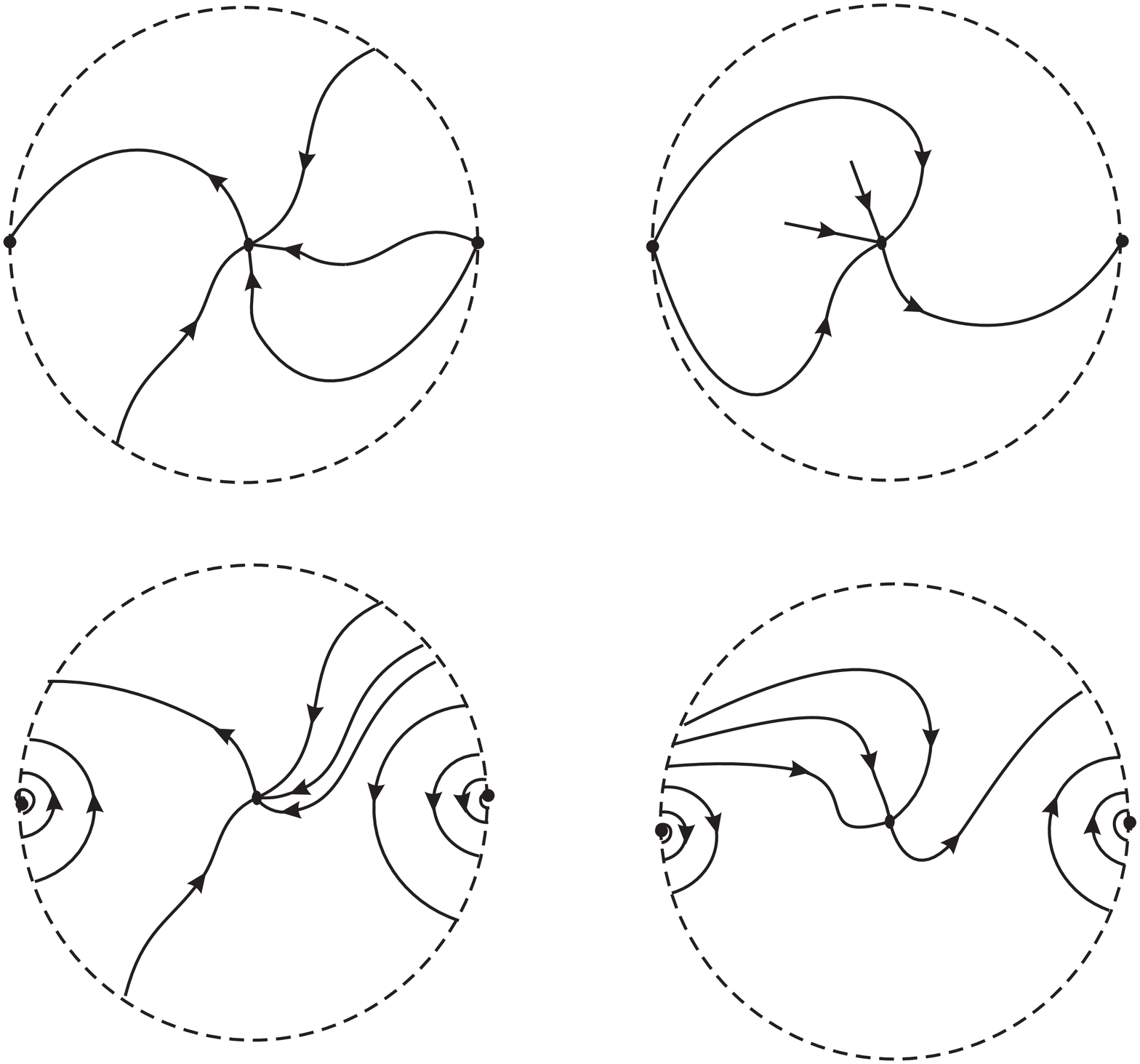}
\end{center}
\caption{Phase portrait of Case $8$: $a_7a_2\neq 0$ and $a_1=0$.}
\label{figura38}
\end{figure}

Now if $a_2=0$, $X$ has a circle of singularities on $\mathbb S^2$
determined by the intersection of the plane $a_7x+a_8y=0$ with
$\mathbb S^2$. Note that $H(x,y,z)=x$ is a first integral of $X$.
So the phase portrait of $X$ is equivalent to Figure \ref{figura2}
$(a)$. \hfill $\Box$

\smallskip\noindent {\it Proof of Corollary \ref{cor:02}}. It
is a straightforward consequence of the proof of Theorem
\ref{the:09}. \hfill$\Box$

\section{Singular points of homogeneous polynomial vector fields of $\mathbb {CP}^2$}
\label{sec:11}

In this section we present arguments and results
which go back to Darboux \cite{ref:8}, see also \cite{ref:21}.
First we recall that $\mathbb {CP}^2=\{\mathbb C^3\setminus
\{(0,0,0)\}\}/\thicksim$ with the equivalence relation
$[X,Y,Z]\thicksim [X',Y',Z']$ if, and only if, there exists
$\lambda\in \mathbb C\setminus \{0\}$ such that
$[X',Y',Z']=\lambda [X,Y,Z]$.

Let $P$, $Q$ and $R$ be homogeneous polynomials of degree $m+1$ in
the complex variables $X$, $Y$ and $Z$. We say that the
homogeneous $1$--form $\omega =PdX+QdY+RdZ$ of degree $m+1$ is
{\it projective} if $XP+YQ+ZR=0$. Therefore, the $1$--form is
projective if and only if there exist three homogeneous
polynomials $L$, $M$ and $N$ of degree $m$ such that $
P=ZM-YN,\;Q=XN-ZL,\;R=YL-XM$, or equivalently $
(P,Q,R)=(L,M,N)\land (X,Y,Z)$. Then we can write
\[
\omega =\left|\begin{array}{ccc}L & M & N \\ X & Y & Z \\ dX & dY
& dZ
\end{array}\right|.
\]
We notice that the polynomials $L$, $M$ and $N$ are not uniquely
determined by $P$, $Q$ and $R$. These polynomials can be replaced
by $L'=L+X\Delta$, $M'=M+Y\Delta$ and $N'=N+Z\Delta$, where
$\Delta$ is any homogeneous polynomial at the variables $X$, $Y$,
$Z$ with degree $m-1$.

The projective $1$--form $\omega$ defines a differential equation
in $\mathbb{CP}^2$ given by $\omega =0$, which may written as $
(ZM-YN)dX+(XN-LZ)dY+(LY-MX)dZ=0$.

Usually in the literature a projective $1$--form $\omega$ is
called a {\it Pfaff algebraic form} of degree $m+1$ of $\mathbb
{CP}^2$, see for more details Jouanolou \cite{ref:17}.

Let $F(X,Y,Z)$ be a homogeneous polynomial of degree $n$ in the
variables $X$, $Y$, $Z$ and coefficients in $\mathbb C$. Then
$F(X,Y,Z)=0$ is an algebraic curve of $\mathbb {CP}^2$. Let
$p=(X_0,Y_0,Z_0)$ be a point of $\mathbb {CP}^2$. Since the three
coordinates of $p$ cannot be zero, without loss of generality we
can assume that $p=(0,0,-1)$. Then suppose that the expression of
$F(X,Y,Z)$ restricted to $Z=-1$ is
\[
F(X,Y,-1)=F_d(X,Y)+F_{d+1}(X,Y)+\cdots +F_n(X,Y),
\]
where $0\leq d\leq n$ and $F_j(X,Y)$ denotes a homogeneous
polynomial of degree $j$ in the variables $X$ and $Y$ for
$j=d,\ldots n$, with $F_d$ different from the zero polynomial. We
say that $d=d_p(F)$ is the {\it multiplicity} of the curve $F=0$
at the point $p$. If $d=0$ then the point $p$ does not belong to
the curve $F=0$. If $d=1$ we say that $p$ is a {\it simple} point
for the curve $F=0$. If $d>1$ we say that $p$ is a {\it multiple}
point of $F=0$. In particular $p$ is a multiple point of $F=0$ if
and only if
\[
\frac{\partial F}{\partial X}(p)=\frac{\partial F}{\partial
Y}(p)=\frac{\partial F}{\partial Z}(p)=0.
\]
Suppose that for $d>0$ we have that $F_d=\Pi_{i=1}^rL_i^{r_i}$ are
different straight lines, called {\it tangent lines} to $F=0$ at
the point $p$, $r_i$ is the {\it multiplicity} of the tangent line
$L_i$ at $p$. For $d>1$ we say that $p$ is an {\it ordinary
multiple} point if the multiplicity of all tangents at $p$ is $1$,
otherwise we say that $p$ is a {\it non--ordinary multiple point}.

Let $F=0$ and $G=0$ be two algebraic curves and $p$ a point of
$\mathbb {CP}^2$. We say that $F=0$ and $G=0$ intersect {\it
strictly} at $p$, if $F=0$ and $G=0$ do not have any common
component which pass through $p$. We say that $F=0$ and $G=0$
intersect {\it transversally} at $p$ if $p$ is a simple point of
$F=0$ and $G=0$, and the tangent to $F=0$ at $p$ is distinct to
the tangent to $G=0$ at $p$. The proof of the following two
theorems can be found in \cite{ref:13}.

\begin{theorem}
{\rm \bf (Intersection Number Theorem)} There exists a unique
multiplicity or intersection number $I(p,F\cap G)$ defined for all
algebraic curves $F=0$ and $G=0$ and for all point $p$ of $\mathbb
{CP}^2$ satisfying the following properties.
\begin{itemize}
\item[(a)] $I(p,F\cap G)$ is a non--negative integer for all $F$,
$G$ and $p$ when $F=0$ and $G=0$ intersect strictly at $p$.
$I(p,F\cap G)=\infty$ if $F=0$ and $G=0$ do not intersect strictly
at $p$. \item[(b)] $I(p,F\cap G)=0$ if and only if $p$ is not a
common point to $F=0$ and $G=0$. $I(p,F\cap G)$ depends only on
the components of $F=0$ and $G=0$ which pass through $p$.
\item[(c)] If $T$ is a change of coordinates and $T(p)=q$, then
$I(q,T(F)\cap T(G))=I(p,F\cap G)$. \item[(d)] $I(p,F\cap
G)=I(p,G\cap F)$. \item[(e)] $I(p,F\cap G)\geq d_p(F)d_p(G)$, the
equality holds if and only if $F=0$ and $G=0$ do not have common
tangents at $p$. \item[(f)] $I(p,F\cap G_1G_2)=I(p,F\cap
G_1)+I(p,F\cap G_2)$. \item[(g)] $I(p,F\cap G)=I(p,F\cap (G+AF))$
for all homogeneous polynomial $A(X,Y,Z)$.
\end{itemize}
\end{theorem}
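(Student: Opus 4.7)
The plan is to realize the intersection number as a concrete local algebraic invariant and then verify the seven axioms, deducing uniqueness from them. In an affine chart around $p$, I would define
\[
I(p, F\cap G) := \dim_{\mathbb{C}} \mathcal{O}_p/(f,g),
\]
where $\mathcal{O}_p$ is the local ring of $\mathbb{CP}^2$ at $p$ and $f,g$ are dehomogenizations of $F,G$ in that chart. A preliminary step is to check that this dimension is independent of both the chart and the chosen homogeneous lifts; both follow because rechoosing multiplies $f$ and $g$ by units in $\mathcal{O}_p$, leaving the ideal $(f,g)$ unchanged.

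Existence amounts to verifying the seven properties in turn. Symmetry (d) is automatic; (g) follows from $(f,g) = (f, g+af)$; and (c) is immediate because a projective change of coordinates induces an isomorphism of local rings. For (a), a vector-space dimension is a nonnegative integer, and finiteness when $F,G$ share no component through $p$ follows from the Nullstellensatz applied locally: the ideal $(f,g)$ then contains a power of $\mathfrak{m}_p$, so $\mathcal{O}_p/(f,g)$ is Artinian. Property (b) reduces to the statement that $I=0$ iff one of $f,g$ is a unit at $p$, iff $p$ is not a common zero. Property (f) comes from the short exact sequence
\[
0 \longrightarrow \mathcal{O}_p/(f, g_2) \xrightarrow{\,\cdot g_1\,} \mathcal{O}_p/(f, g_1 g_2) \longrightarrow \mathcal{O}_p/(f, g_1) \longrightarrow 0,
\]
whose injectivity on the left uses that $f$ shares no component with $g_1$, so $g_1$ is a nonzerodivisor modulo $f$. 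For uniqueness, I would induct on $\min(\deg_Y f, \deg_Y g)$ after a linear change placing $p$ at the origin with no vertical tangent: polynomial division in $Y$ combined with (g) strictly lowers this quantity, (f) splits off factors, and the base case of two lines is forced by (a), (c), (e) to equal $1$ in the transverse situation.

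The main obstacle is expected to be property (e), particularly the sharp equality clause. For the inequality $I(p, F\cap G) \geq d_p(F)\, d_p(G)$ I would filter $\mathcal{O}_p/(f,g)$ by powers of $\mathfrak{m}_p$ and compare the associated graded with the quotient $\mathbb{C}[X,Y]/(F_d, G_e)$ of the initial forms $F_d, G_e$ of $f, g$. Characterizing exactly when equality holds is more subtle: it requires showing that the graded pieces saturate precisely when $F_d$ and $G_e$ share no common factor, which in turn depends on algebraic closure so that a common irreducible factor of the initial forms corresponds to a literally shared tangent line through $p$. Once (e) is in hand, properties (a)--(g) together make both existence and uniqueness rigorous along the outline above.
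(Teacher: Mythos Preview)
Your proposal is a correct outline of the classical proof (essentially Fulton's construction of the intersection number via $\dim_{\mathbb{C}} \mathcal{O}_p/(f,g)$), but the paper itself does not prove this theorem at all: it merely states the result and refers the reader to \cite{ref:13} (Fulton, \emph{Algebraic Curves}) for the proof. So there is nothing to compare against in the paper beyond the citation, and your sketch is in fact the argument the paper is pointing to.
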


\begin{theorem}
{\rm \bf (Bezout Theorem)} Let $F=0$ and $G=0$ be two algebraic
curves of $\mathbb {CP}^2$ of degrees $r$ and $s$ respectively
without common components. Then $\sum_p I(p,F\cap G)=rs$.
\end{theorem}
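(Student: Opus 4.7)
The plan is to prove Bezout's Theorem by reducing it to a statement about resultants and then matching the resulting multiplicities with the intersection numbers axiomatized in the previous theorem. Since the total intersection number $\sum_p I(p,F\cap G)$ is invariant under projective changes of coordinates by property (c), I may first normalize the situation.

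First I would choose projective coordinates $(X,Y,Z)$ so that (i) the point $[0{:}1{:}0]$ lies neither on $F=0$ nor on $G=0$, (ii) no intersection point of $F=0$ and $G=0$ lies on the line $Z=0$, and (iii) the line through any two distinct intersection points is not parallel to the $Y$-axis (i.e.\ avoids $[0{:}1{:}0]$). Such a coordinate system exists because $F=0$ and $G=0$ have finitely many common points (a consequence of having no common component, which I would verify using unique factorization in $\mathbb{C}[X,Y,Z]$). In these coordinates, writing $F$ and $G$ as polynomials in $Y$ with coefficients in $\mathbb{C}[X,Z]$, the leading coefficients are nonzero constants; hence the resultant $\mathcal R(X,Z)=\mathrm{Res}_Y(F,G)$ is a homogeneous polynomial in $X,Z$ of degree exactly $rs$ (this is the standard degree computation from the Sylvester matrix, using homogeneity of $F$ and $G$).

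Next I would use the fundamental property of the resultant: $\mathcal R(X_0,Z_0)=0$ if and only if the specialized polynomials $F(X_0,Y,Z_0)$ and $G(X_0,Y,Z_0)$ (in $Y$) share a common root, which by our normalization is equivalent to the existence of a $Y_0$ with $[X_0{:}Y_0{:}Z_0]\in\{F=0\}\cap\{G=0\}$. Factoring $\mathcal R(X,Z)=\prod_i(\beta_i X-\alpha_i Z)^{e_i}$ over $\mathbb{C}$, each linear factor thus corresponds to a unique intersection point $p_i=[\alpha_i{:}Y_i{:}\beta_i]$ and $\sum_i e_i=rs$. The main task — and the real obstacle — is to show that $e_i=I(p_i,F\cap G)$ for every $i$.

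To establish this equality I would dehomogenize by setting $Z=1$ and work locally at each intersection point, translated to the origin. Using property (g) of the Intersection Number Theorem (invariance under adding multiples of $F$) together with the Euclidean-algorithm construction of the resultant, I would express the resultant multiplicity at $p_i$ as the $\mathbb{C}$-dimension of the local algebra $\mathcal{O}_{p_i}/(F,G)$ — which is the standard algebraic definition of the intersection multiplicity. Finally, I would verify that this local-algebra dimension satisfies axioms (a)--(g); by uniqueness in the Intersection Number Theorem, it must coincide with $I(p_i,F\cap G)$. Summing gives $\sum_p I(p,F\cap G)=\sum_i e_i=rs$, completing the proof. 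The hardest technical point is the identification of the local resultant multiplicity with $\dim_{\mathbb{C}}\mathcal{O}_{p_i}/(F,G)$; everything else is either a normalization or a formal consequence of the axioms.
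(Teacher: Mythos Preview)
The paper does not actually prove Bezout's Theorem; it merely states it and refers the reader to Fulton's \emph{Algebraic Curves} \cite{ref:13} for the proof. Your resultant-based outline is essentially the classical argument given there, so there is nothing to compare against beyond noting that your sketch is consistent with the cited source.
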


Let $F_i=0$, for $i=1,\ldots, s$, be algebraic curves and $p$ a
point of $\mathbb {CP}^2$. We define the {\it multiplicity} or
{\it number of intersection} of the curves $F_1=0,\ldots,F_s=0$ at
$p$ as $I(p,\cap_{i=1}^sF_i)=\min_{i<j}\{I(p,F_i\cap F_j)\}$.

A point $[X_0,Y_0,Z_0]$ belonging to $\mathbb {CP}^2$ is a {\it
singular} point of a projective $1$--form $\omega=PdX+QdY+RdZ$ of
degree $m+1$ if $P(X_0,Y_0,Z_0)=Q(X_0,Y_0,Z_0)=R(X_0,Y_0,Z_0)=0$,
i.e. $[X_0,Y_0,Z_0]$ satisfies the system
\[
ZM-YN=0,\;\; XN-ZL=0,\;\; YL-XM=0.
\]
The proof of the next two results can be found in \cite{ref:8} or
\cite{ref:18}.

\begin{lemma} {\rm \bf (Darboux Lemma)} Let $A$, $A'$, $B$, $B'$,
$C$ and $C'$ be homogeneous polynomials in the variables $X$, $Y$
and $Z$ of degrees $l$, $l'$, $m$, $m'$, $n$ and $n'$
respectively, verifying the identity $AA'+BB'+CC'\equiv 0$. Assume
that the curves $A=0$, $B=0$, $C=0$ and the curves $A'=0$, $B'=0$,
$C'=0$ do not have any common component, respectively. Then $
\sum_p I(p,A\cap B\cap C)+\sum_p I(p,A'\cap B'\cap
C')=(lmn+l'm'n')/\gamma$, where $\gamma =l+l'=m+m'=n+n'$.
\end{lemma}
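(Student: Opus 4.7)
The plan is to reduce the equality to repeated application of Bezout's theorem, using the algebraic identity $AA'+BB'+CC'\equiv 0$ as a syzygy that transfers information between the triple $(A,B,C)$ and the triple $(A',B',C')$. The intersection-number properties (a)--(g) will do most of the local work.

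First, I would use the hypothesis that neither triple has a common component to ensure that each pair of curves intersects in finitely many points, and then invoke Bezout to record $\sum_p I(p,A\cap B)=lm$, $\sum_p I(p,A\cap C)=ln$, $\sum_p I(p,B\cap C)=mn$ together with the three primed analogues. The syzygy implies that at any $p$ with $A(p)=B(p)=0$ one has $C(p)C'(p)=0$, so the finite set $A\cap B$ splits over $\{C=0\}\cup\{C'=0\}$; symmetric decompositions hold for the other pairs.

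Second, the key local identity, obtained by combining $BB'\equiv -CC'\pmod{A}$ with properties (f) and (g), is
\[
I(p,A\cap B)+I(p,A\cap B')=I(p,A\cap C)+I(p,A\cap C'),
\]
together with its permutations in the roles of $A$, $B$, $C$. Summed over $p$ this is automatically consistent with Bezout and with $\gamma=l+l'=m+m'=n+n'$, but locally it lets me rewrite each of the three pairwise multiplicities that enter the definition $I(p,A\cap B\cap C)=\min\{I(p,A\cap B),I(p,A\cap C),I(p,B\cap C)\}$ in terms of data involving the primed curves. Collecting these contributions and dividing by $\gamma$ should produce the closed form $(lmn+l'm'n')/\gamma$.

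The main obstacle, and the reason Darboux's original argument in \cite{ref:8} is delicate, is the bookkeeping at points $p$ lying simultaneously in $\{A=B=C=0\}$ and in $\{A'=B'=C'=0\}$. At such points the minimum in the definition of the triple intersection interacts nontrivially with the syzygy, and one must show that the local multiplicities distribute between $\sum_p I(p,A\cap B\cap C)$ and $\sum_p I(p,A'\cap B'\cap C')$ in exactly the proportion prescribed by the degree ratios $l:l'$, $m:m'$, $n:n'$, so that no spurious cross terms survive in the final count.
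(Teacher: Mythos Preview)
The paper does not actually prove this lemma; it only cites Darboux \cite{ref:8} and the thesis \cite{ref:18}, so there is no proof in the text to compare your argument against. I can therefore only evaluate your outline on its own merits.

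Your ingredients are the right ones, and the local identity
\[
I(p,A\cap B)+I(p,A\cap B')=I(p,A\cap C)+I(p,A\cap C')
\]
is correctly derived from $BB'\equiv -CC'-AA'$ via properties (f) and (g). But two genuine gaps remain. First, the hypothesis only says that $A$, $B$, $C$ have no \emph{common} component (and likewise for the primed triple); it does not exclude a component shared by, say, $A$ and $B$ alone. In that situation $\sum_p I(p,A\cap B)$ is infinite and your opening invocation of Bezout for each pair is unjustified. The argument must be organized so that one never needs finiteness of a bare pairwise sum, only of quantities that are controlled by the triple condition.

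Second, and more seriously, the step you label ``collecting these contributions and dividing by $\gamma$'' is precisely where the content of the lemma lies, and you have not carried it out. The triple intersection number is defined as a \emph{minimum} of three pairwise numbers, and passing from additive identities among the pairwise $I(p,\cdot\cap\cdot)$ to a statement about $\min$ of three of them is not formal; one has to argue, for each point, which of the three pairwise numbers realizes the minimum and then track how the syzygy forces the remaining excess to appear on the primed side. You acknowledge this as ``the main obstacle'' but do not resolve it, and the phrase ``should produce the closed form'' is not a proof. Until that bookkeeping is made explicit your argument is an outline of Darboux's strategy rather than a proof of the lemma.
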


\begin{proposition}{\rm \bf (Darboux Proposition)}
For any projective $1$--form of degree $m+1$ of $\mathbb {CP}^2$
having finitely many singular points we have that its number of
singular points taking into account their multiplicities or
numbers of intersection satisfies $\sum_p I(p, (ZM-YN)\cap
(XN-ZL)\cap (YL-XM))=m^2+m+1$.
\end{proposition}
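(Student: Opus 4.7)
The plan is to invoke the Darboux Lemma stated immediately above with the choice
$(A,B,C)=(P,Q,R)=(ZM-YN,\,XN-ZL,\,YL-XM)$ and $(A',B',C')=(X,Y,Z)$. The projectivity relation $XP+YQ+ZR=0$ expands, on substituting the formulas for $P,Q,R$, to
$$
X(ZM-YN)+Y(XN-ZL)+Z(YL-XM)\equiv 0,
$$
which is precisely the identity $AA'+BB'+CC'\equiv 0$ required by the lemma. The degree balance is also immediate: the unprimed triple has common degree $m+1$ and the primed triple common degree $1$, so the lemma's compatibility condition is satisfied with $\gamma = l+l' = m+m' = n+n' = m+2$.

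Before applying the lemma I verify its two no-common-component hypotheses. For $(X,Y,Z)$ this is obvious, since $\{X=0\}$, $\{Y=0\}$, $\{Z=0\}$ are three distinct projective lines. For $(P,Q,R)$, if they shared an irreducible component $\Gamma$, then every point of $\Gamma$ would satisfy $P=Q=R=0$ and hence be a singular point of $\omega$; this contradicts the hypothesis that the singular set of $\omega$ is finite.

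With the hypotheses verified, the Darboux Lemma gives
$$
\sum_p I(p,P\cap Q\cap R)\;+\;\sum_p I(p,X\cap Y\cap Z)\;=\;\frac{(m+1)^3+1}{m+2}.
$$
The second sum vanishes because $\{X=Y=Z=0\}=\emptyset$ in $\mathbb{CP}^2$. Using the factorization $(m+1)^3+1=((m+1)+1)\bigl((m+1)^2-(m+1)+1\bigr)=(m+2)(m^2+m+1)$, the right-hand side simplifies to $m^2+m+1$, which is the claimed count.

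The only delicate step is the finiteness/no-common-component verification for $(P,Q,R)$, which I handle by contradiction against the finiteness hypothesis; the rest of the argument is a direct application of the Darboux Lemma together with an elementary algebraic factorization, so I expect no serious obstacle beyond spotting the right substitution.
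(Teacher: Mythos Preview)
Your proof is correct and is precisely the intended route: the paper does not reproduce a proof but cites Darboux and the thesis \cite{ref:18}, and since it states the Darboux Lemma immediately before the Proposition, the expected derivation is exactly the substitution $(A,B,C)=(P,Q,R)$, $(A',B',C')=(X,Y,Z)$ that you carry out. Your verification of the hypotheses (the no-common-component check via the finiteness of the singular set, and the vanishing of $\sum_p I(p,X\cap Y\cap Z)$ because no point of $\mathbb{CP}^2$ lies on all three coordinate lines) together with the factorization $(m+1)^3+1=(m+2)(m^2+m+1)$ completes the argument cleanly.
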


\smallskip\noindent {\it Proof of Theorem \ref{pro:12}.} We
have that $X$ induce a projective $1$--form $\omega$ on $\mathbb
{CP}^2$ of degree $n$ given by $\omega = PdX+QdY+RdZ$. Therefore,
by the Darboux Proposition, $\omega$ has $(n-1)^2+(n-1)+1=n^2-n+1$
singular points, taking into account their multiplicities or
numbers of intersection. Now, if $[X_0,Y_0,Z_0]$ is a singular
point of $\omega$ and $(X_0,Y_0,Z_0)\in \mathbb R^3$, then
$\lambda (X_0,Y_0,Z_0)$, $\lambda \in \mathbb R$, is a straight
line of singular points of $X$ which determine exactly two
singularities of $X$ on $\mathbb S^2$. Hence, we can conclude that
if $X$ has finitely many singularities on $\mathbb S^2$, then $X$
has at most $2(n^2-n+1)$ singularities on $\mathbb S^2$. \hfill
$\Box$

\section{Topological classification of all phase portraits of homogeneous
polynomial vector fields on $\mathbb S^{2}$ of degree $2$ modulo
limit cycles}

\label{sec:07}
 Let $X=(P,Q,R)$ be a homogeneous polynomial vector fields on
$\mathbb S^{2}$ of degree $2$. Consider the system (\ref{eq:04})
associated to $X$. Using the notation of previous section, we can
write (\ref{eq:04}) as
\begin{equation}
\label{eq:131}
\begin{array}{lcl}
\dot{x} & = & z(a_4x+a_5y+a_3z)-y(-a_1x-a_2y)=zM-yN, \\
\dot{y} & = & x(-a_1x-a_2y)-z(-a_7x-a_8y-a_6z)=xN-zL, \\
\dot{z} & = & y(-a_7x-a_8y-a_6z)-x(a_4x+a_5y+a_3z)=yL-xM,
\end{array}
\end{equation}
where $L(x,y,z)=-a_7x-a_8y-a_6z$, $M(x,y,z)=a_4x+a_5y+a_3z$,
$N(x,y,z)=-a_1x-a_2y$, i.e.
\begin{equation}
\label{eq:130} \left(\begin{array}{c} L \\ M
\\ N \end{array}\right)=\left(\begin{array}{ccc}
 -a_7 & -a_8 & -a_6 \\ a_4 & a_5 & a_3 \\
-a_1 & -a_2 & 0\end{array} \right)\left(\begin{array}{c} x \\ y
\\ z \end{array}\right)=A\left(\begin{array}{c} x \\ y \\ z \end{array}\right).
\end{equation}
Note that $X=(P,Q,R)=(L,M,N)\land (x,y,z)$. Then,
$(x_0,y_0,z_0)\neq (0,0,0)$ is a singular point of $X$ if and only
if there exist $\lambda_0 \in \mathbb R$ such that
$(L,M,N)(x_0,y_0,z_0)=\lambda_0 (x_0,y_0,z_0)$. Now, as $L$, $M$
and $N$ are homogeneous polynomials, it follows that $\lambda
(x_0,y_0,z_0)$, $\lambda \in \mathbb R$, is a straight line of
singularities of $X$. Therefore, for determining the singularities
of $X$ we must find the real eigenvectors of the $3\times 3$
matrix $A$ given by (\ref{eq:130}), i.e. we have to calculate the
real eigenvalues of $A$. Let $\lambda_0 \in \mathbb R$ an
eigenvalue of $A$. If there exists a unique eigenvector associated
to $\lambda_0$, then it determines a straight line of
singularities of $X$, and so we have two singularities of $X$ on
$\mathbb S^2$. Now if there exist two eigenvectors associated to
$\lambda_0$, then we have a plane of singularities of $X$ that
determine a circle of singularities of $X$ on $\mathbb S^2$. Thus,
$X$ has finitely many singularities on $\mathbb S^2$ if and only
if each real eigenvalue of $A$ has exactly one eigenvector
associated. Therefore, for degree $2$, we obtain another proof of
Theorem \ref{pro:12}. We also conclude that if $X$ has infinitely
many singularities on $\mathbb S^2$, then it has one invariant
circle on $\mathbb S^2$ formed by singularities and its phase
portrait is equivalent to one of Figures \ref{figura2} or
\ref{figura3}.

In Section \ref{sec:06} we classified all phase portraits of
homogeneous polynomial vector fields on $\mathbb S^2$ of degree
$2$ having a degenerate singularity or a center. Therefore, for
determining all phase portraits of a homogeneous polynomial vector
field $X$ on $\mathbb S^2$ of degree $2$ we have to consider only
the $X$'s with all their singularities non--degenerate and without
centers. In this case, the singularities can be saddles, nodes or
foci. Now, as these singularities have index $-1$ or $1$ and the
maximum number of singularities of $X$ is $6$, by symmetry of $X$
with respect to origin and the Poincar\'e--Hopf Index Theorem
(see, for instance \cite{ref:14}) it follows that we have to study
the phase portraits with two saddles and four singularities that
can be nodes or foci, and the phase portraits with two
singularities that can be either nodes or foci.

Let $p$ be a singularity of $X$, by Proposition \ref{pro:09}, we
can suppose that $p=(0,0,-1)$ and so $a_3=a_6=0$ in
(\ref{eq:131}). Hence,
\[
A=\left(\begin{array}{ccc}
 -a_7 & -a_8 & 0 \\ a_4 & a_5 & 0 \\
-a_1 & -a_2 & 0\end{array} \right).
\]
We have that the eigenvalues of $A$ are $\{0,(a_5-a_7\pm
\sqrt{\alpha})/2\}$, where $\alpha=(a_5+a_7)^2-4a_4a_8$.

In the case that $X$ has only two singularities that can be either
nodes or foci, then $A$ has either a unique eigenvalue with a
unique eigenvector associated, or $A$ has one real eigenvalue and
two complex eigenvalue. In the first case $0$ is the unique
eigenvalue of $A$ and
$0=(a_5-a_7+\sqrt{\alpha})(a_5-a_7-\sqrt{\alpha})=4(a_4a_8-a_5a_7)$,
i.e. $a_4a_8-a_5a_7=0$. Hence, using the notation of Section
\ref{sec:06}, we have that $(0,0,-1)$ is a degenerate singularity,
because $\det(D\widetilde{X}(0,0))=a_4a_8-a_5a_7=0$. This case was
already studied in Section \ref{sec:06}.

If $A$ has one real eigenvalue and two complex, then $0$ is the
real eigenvalue of $A$ and $\alpha<0$. Hence, as in this case the
system associated to $X$ has the normal form \eqref{eq:04} with
$a_3=a_6=0$, we have that
$\tilde{R}(u,v)=R(2u,2v,u^2+v^2-1)=-4(a_4u^2 + (a_5 + a_7)uv +
a_8v^2)$. Solving $\tilde{R}=0$ with respect to $u$, it follows
that $ u=-(a_5+a_7\pm\sqrt{\alpha})v/(2a_4)$ are its roots. Note
that, as $\alpha<0$, $a_4\neq 0$. Thus we have that $(0,0)$ is the
unique real root of $\tilde{R}=0$, and that $\tilde{R}$ does not
change of sign. Therefore, applying Theorem \ref{the:11} $(a)$, we
have that in this case $X$ has no limit cycles and so its phase
portrait in the Poincar\'e disc is equivalent to the one of Figure
\ref{figura37}.

We have that $(0,0,-1)$ is a saddle of $X$ if and only if
$\det(D\widetilde{X}(0,0))<0$, i.e. $a_4a_8-a_5a_7<0$. In this
case, as $(a_5+a_7)^2-4a_4a_8=(a_5-a_7)^2-4(a_4a_8-a_5a_7)>0$, $A$
has three reals eigenvalues given by $\{0,(a_5-a_7\pm
\sqrt{\alpha})/2\}$. So $\alpha$ must be positive. Therefore, the
eigenvectors of $A$ associated to the eigenvalues $\{0,(a_5-a_7\pm
\sqrt{\alpha})/2\}$ are $(0,0,1)$,
$((a_8(a_5-a_7+\sqrt{\alpha}))/(2a_1a_8-a_2(a_5+a_7+\sqrt{\alpha})),(
a_4(a_5-a_7+\sqrt{\alpha}))/(2a_2a_4-
a_1(a_5+a_7-\sqrt{\alpha})),1)$ and
$((a_8(a_5-a_7-\sqrt{\alpha}))/(2a_1a_8-a_2(a_5+a_7-\sqrt{\alpha})),
(a_4(a_5-a_7-\sqrt{\alpha}))/(2a_2a_4-a_1(a_5+a_7+\sqrt{\alpha})),1)$,
respectively. Note that the eigenvectors of $A$ are well defined
if
$(2a_1a_8-a_2(a_5+a_7+\sqrt{\alpha}))(2a_1a_8-a_2(a_5+a_7-\sqrt{\alpha}))(2a_2a_4-a_1(a_5+a_7+\sqrt{\alpha}))(2a_2a_4-a_1(a_5+a_7-\sqrt{\alpha}))\neq
0$, i.e. if $16a_4a_8(a_2^2a_4-a_1a_2(a_5+a_7)+a_1^2a_8)^2\neq 0$.
System (\ref{eq:131}) with $a_3=a_6=0$ and $a_4a_8-a_5a_7<0$ have
two saddles and four singularities that can be nodes, foci or
centers if and only if $A$ has three linearly independent
eigenvectors. Thus, we distinguish the following cases.

\smallskip\noindent {\it Case 1:}
$16a_4a_8(a_2^2a_4-a_1a_2(a_5+a_7)+a_1^2a_8)^2\neq 0$. This case
has been classified in the previous paragraph.

\smallskip\noindent {\it Case 2:} $a_4=0$, $a_1\neq 0$ and
$a_2(a_5+a_7)-a_1a_8\neq 0$. In this case $-a_5a_7<0$ and the
eigenvalues of $A$ are $0$, $-a_7$ and $a_5$ with respective
eigenvectors $(0,0,1)$,
$(a_5a_8/(a_2(a_5+a_7)-a_1a_8),-a_5(a_5+a_7)/(a_2(a_5+a_7)-a_1a_8),1)$
and $(a_7/a_1,0,1)$.

\smallskip\noindent {\it Case 3:} $a_4=a_1=0$ and $a_2\neq 0$.
In this case $-a_5a_7<0$ and so $a_5+a_7\neq 0$. The eigenvalues
of $A$ are $0$, $-a_7$ and $a_5$ with respective eigenvectors
$(0,0,1)$, $(1,0,0)$ and $(a_5a_8/(a_2(a_5+a_7)),-a_5/a_2,1)$.

\smallskip\noindent {\it Case 4:} $a_4=a_1=a_2=0$. In this
case $-a_5a_7<0$ and so $a_5+a_7\neq 0$. The eigenvalues of $A$
are $0$, $-a_7$ and $a_5$ with respective eigenvectors $(0,0,1)$,
$(1,0,0)$ and $(-a_8/(a_5+a_7),1,0)$.

\smallskip\noindent {\it Case 5:} $a_4=0$, $a_1\neq 0$ and
$a_8=a_2(a_7+a_5)/a_1$. Now the eigenvalues of $A$ are $0$, $-a_7$
and $a_5$ with respective eigenvectors $(0,0,1)$, $(a_7/a_1,0,1)$
and $(-a_2/a_1,1,0)$.

\smallskip\noindent {\it Case 6:} $a_1=a_2=0$ and $a_4a_8\neq
0$. The eigenvalues of $A$ are $0$, $(a_5+a_7\pm \sqrt{\alpha})/2$
with respective eigenvectors $(0,0,1)$,
$(-2a_8/(a_5+a_7+\sqrt{\alpha}),1,0)$ and
$(-2a_8/(a_5+a_7-\sqrt{\alpha}),1,0)$.

\smallskip\noindent {\it Case 7:}
$a_8=a_2(-a_2a_4+a_1(a_5+a_7))/a_1^2$ and $a_1a_4\neq 0$. The
eigenvalues of $A$ are $0$, $(a_2a_4-a_1a_7)/a_1$ and
$(a_1a_5-a_2a_4)/a_1$ with respective eigenvectors $(0,0,1)$,
$(((a_1a_7-a_2a_4)(a_1(a_5+a_7)-a_2a_4))/(a_1^2(a_1(a_5+a_7)-2a_2a_4)),
((a_2a_4-a_1a_7)a_4)/(a_1(a_1(a_5+a_7)-2a_2a_4)),1)$ and
$(-a_2/a_1,1,0)$. Note that
$a_4a_8-a_5a_7=(a_2a_4-a_1a_5)(a_1a_7-a_2a_4)/a_1^2<0$ and
$a_1(a_5+a_7)-2a_2a_4\neq 0$. Otherwise,
$a_5=(2a_2a_4-a_1a_7)/a_1$ and so
$a_4a_8-a_5a_7=(a_1a_7-a_2a_4)^2/a_1^2>0$.

\smallskip\noindent {\it Case 8:} $a_8=0$. We have
$-a_5a_7<0$. This case is equivalent to one of Cases $2$--$5$,
i.e. if the coefficients of system \eqref{eq:131} with $a_3=a_6=0$
satisfies $a_8=0$ then  by the orthogonal linear change of
variables
\[
\left(\begin{array}{c} x \\ y
\\ z \end{array}\right)=\left(\begin{array}{ccc}
 0 & 1 & 0 \\ 1 & 0 & 0 \\
0 & 0 & 1 \end{array} \right)\left(\begin{array}{c} \tilde{x} \\
\tilde{y}
\\ \tilde{z} \end{array}\right)
\]
we obtain one of Cases $2$--$5$.

Case $1$ is equivalent to Case $2$ doing the orthogonal linear
change of variables $(x,y,z)=M(\tilde{x}, \tilde{y}, \tilde{z})$,
where
\[
M=\left(\begin{array}{ccc} \displaystyle
-\frac{2a_8}{\sqrt{(a_5+a_7+\sqrt{\alpha})^2+4a_8^2}} &
\displaystyle
\frac{a_5+a_7+\sqrt{\alpha}}{\sqrt{(a_5+a_7+\sqrt{\alpha})^2+4a_8^2}}
& 0
\\ \displaystyle \frac{a_5+a_7+\sqrt{\alpha}}{\sqrt{(a_5+a_7+\sqrt{\alpha})^2+4a_8^2}} & \displaystyle \frac{2a_8}{\sqrt{(a_5+a_7+\sqrt{\alpha})^2+4a_8^2}} & 0 \\
0 & 0 & 1 \end{array} \right),
\]
and the $a_i$'s denote the coefficients of system \eqref{eq:131}
in the variables $(x,y,z)$ satisfying Case $1$. Indeed, denoting
by $\tilde{a}_i$ the coefficients of system \eqref{eq:131} in the
variables $(\tilde{x},\tilde{y},\tilde{z})$, we have that
$\tilde{a}_1=(2a_1a_8-a_2(a_5+a_7+\sqrt{\alpha}))/\sqrt{(a_5+a_7+\sqrt{\alpha})^2+4a_8^2}$,
$\tilde{a}_2=-(2a_2a_8+a_1(a_5+a_7+\sqrt{\alpha}))/\sqrt{(a_5+a_7+\sqrt{\alpha})^2+4a_8^2}$,
$\tilde{a}_5=1/2(a_7-a_5+\sqrt{\alpha})$,
$\tilde{a}_7=1/2(a_5-a_7+\sqrt{\alpha})$, $\tilde{a}_8=a_4+a_8$,
and $\tilde{a}_3=\tilde{a}_4=\tilde{a}_6=0$. Now, as
$16a_4a_8(a_2^2a_4-a_1a_2(a_5+a_7)+a_1^2a_8)^2\neq 0$, it follows
that $\tilde{a}_4=0$, $\tilde{a}_1\neq 0$,
$\tilde{a}_2(\tilde{a}_5+\tilde{a}_7)-\tilde{a}_1\tilde{a}_8\neq
0$ and $-\tilde{a}_5\tilde{a}_7=a_4a_8-a_5a_7<0$.

Case $6$ is equivalent to Case $4$ if $a_5+a_7\neq 0$ doing the
orthogonal linear change of variables
\[
\left(\begin{array}{c} x \\ y
\\ z \end{array}\right)=\left(\begin{array}{ccc}

a & b & 0
\\  d &
e & 0 \\
0 & 0 & 1 \end{array} \right)\left(\begin{array}{c} \tilde{x} \\
\tilde{y}
\\ \tilde{z} \end{array}\right),
\]
where
$a=((a_5+a_7)^2(a_4+a_8)+(a_4-a_8)\sqrt{\sigma})/((a_5+a_7)\sqrt{2\delta\beta})$,
$b =
((a_5+a_7)(a_5(a_5+a_7)^2-a_4(a_8(3a_5+a_7)+a_4(a_7-a_5)))-(a_4(a_4-a_8)+a_5(a_5+a_7))\sqrt{\sigma})/(\sqrt{\delta\beta\gamma}$,
$d=-\sqrt{\beta}/\sqrt{2\delta}$,
$e=((a_5+a_7)(a_4(2a_8(a_4-a_8)-a_7(a_5+a_7))+a_5a_8(a_5+a_7))-(a_7a_4+a_5a_8)\sqrt{\sigma})/\sqrt{\delta
\beta\gamma}$, $\delta=(a_5+a_7)^2+(a_4-a_8)^2$,
$\sigma=(a_5+a_7)^2\alpha$,
$\beta=(a_5+a_7)^2+2a_4(a_4-a_8)-\sqrt{\sigma}$,
$\gamma=(a_5+a_7)^2(a_5^2+a_7^2-2a_4a_8)+(a_7^2-a_5^2)\sqrt{\sigma}$,
and the $a_i$'s denote the coefficients of system \eqref{eq:131}
in the variables $(x,y,z)$ satisfying Case $6$. Note that in this
case $\delta> 0$, $\sigma>0$. Now, as
$((a_5+a_7)^2+2a_4(a_4-a_8)-\sqrt{\sigma})\cdot((a_5+a_7)^2+2a_4(a_4-a_8)+\sqrt{\sigma})=4a_4^2\delta$,
$((a_5+a_7)^2(a_5^2+a_7^2-2a_4a_8)+(a_7^2-a_5^2)\sqrt{\sigma})\cdot((a_5+a_7)^2(a_5^2+a_7^2-2a_4a_8)-(a_7^2-a_5^2)\sqrt{\sigma})=
4(a_5+a_7)^4(a_4a_8-a_5a_7)^2$,
$(a_5+a_7)^2+2a_4(a_4-a_8)=a_5^2+a_7^2+2a_4^2-2(a_4a_8-a_5a_7)>0$
and $a_5^2+a_7^2-2a_4a_8=(a_5-a_7)^2-2(a_4a_8-a_5a_7)>0$ it
follows that $\beta>0$ and $\gamma>0$. Denoting by $\tilde{a}_i$
the coefficients of system \eqref{eq:131} in the variables
$(\tilde{x},\tilde{y},\tilde{z})$, we have that
$\tilde{a}_5=-\sqrt{2}(a_5+a_7)(a_5a_7-a_4a_8)/\sqrt{\gamma}$,
$\tilde{a}_7=-\sqrt{\gamma}/(2(a_5+a_7))$, $\tilde{a}_8=a_4+a_8$,
$\tilde{a}_1=\tilde{a}_2=\tilde{a}_4=\tilde{a}_3=\tilde{a}_4=\tilde{a}_6=0$.
Moreover, $-\tilde{a}_5\tilde{a}_7<0$.

Now, Case $6$ is equivalent to Case $4$ if $a_7=-a_5$ doing the
orthogonal linear change of variables
\[
\left(\begin{array}{c} x \\ y
\\ z \end{array}\right)=\left(\begin{array}{ccc}
\displaystyle \frac{\sqrt{a_8^2-a_4a_8}}{a_4-a_8} & \displaystyle
\frac{a_4\sqrt{a_8^2-a_4a_8}+a_5\sqrt{a_4^2-a_4a_8}}{\sqrt{(a_5(a_4-a_8)+\sqrt{-(a_4-a_8)^2a_4a_8})^2}}
& 0
\\  \displaystyle \frac{\sqrt{a_4^2-a_4a_8}}{a_4-a_8} &
\displaystyle
\frac{a_8\sqrt{a_4^2-a_4a_8}-a_5\sqrt{a_8^2-a_4a_8}}{\sqrt{(a_5(a_4-a_8)+\sqrt{-(a_4-a_8)^2a_4a_8})^2}} & 0 \\
0 & 0 & 1 \end{array} \right)\left(\begin{array}{c} \tilde{x} \\
\tilde{y}
\\ \tilde{z} \end{array}\right).
\]
Indeed, denoting by $\tilde{a}_i$ the coefficients of system
\eqref{eq:131} in the variables $(\tilde{x},\tilde{y},\tilde{z})$,
we have that
$\tilde{a}_5=(a_5^2+a_8a_4)(a_8-a_4)/\sqrt{(a_5(a_4-a_8)+\sqrt{-(a_4-a_8)^2a_4a_8})^2}$,
$\tilde{a}_7=\sqrt{(a_5(a_4-a_8)+\sqrt{-(a_4-a_8)^2a_4a_8})^2}/(a_4-a_8)$,
$\tilde {a}_8=a_4+a_8$ and
$\tilde{a}_1=\tilde{a}_2=\tilde{a}_3=\tilde{a}_4=\tilde{a}_6=0$.
Now, since $a_5^2+a_8a_4<0$, it follows that
$-\tilde{a}_5\tilde{a}_7<0$.

Case $3$ is equivalent to Case $5$ doing the orthogonal linear
change of variables
\[
\left(\begin{array}{c} x \\ y
\\ z \end{array}\right)=\left(\begin{array}{ccc}
\displaystyle -\frac{a_8}{\sqrt{(a_5+a_7)^2+a_8^2}} &
\displaystyle \frac{a_5+a_7}{\sqrt{(a_5+a_7)^2+a_8^2}} & 0
\\ \displaystyle \frac{a_5+a_7}{\sqrt{(a_5+a_7)^2+a_8^2}} & \displaystyle \frac{a_8}{\sqrt{(a_5+a_7)^2+a_8^2}} & 0 \\
0 & 0 & 1 \end{array} \right)\left(\begin{array}{c} \tilde{x} \\
\tilde{y}
\\ \tilde{z} \end{array}\right),
\]
where the $a_i$'s denote the coefficients of system \eqref{eq:131}
in the variables $(x,y,z)$ satisfying Case $3$. Indeed, denoting
by $\tilde{a}_i$ the coefficients of system \eqref{eq:131} in the
variables $(\tilde{x},\tilde{y},\tilde{z})$, we have that
$\tilde{a}_1=-a_2(a_5+a_7)/\sqrt{(a_5+a_7)^2+a_8^2}$,
$\tilde{a}_2=-a_2a_8/\sqrt{(a_5+a_7)^2+a_8^2}$, $\tilde{a}_5=a_7$,
$\tilde{a}_7=a_5$, $
\tilde{a}_8=\tilde{a}_2(\tilde{a}_5+\tilde{a}_7)/\tilde{a}_1$ and
$\tilde{a}_3=\tilde{a}_4=\tilde{a}_6=0$. Now, as $a_2\neq 0$ and
$-a_5a_7<0$, it follows that $\tilde{a}_1\neq 0$ and
$-\tilde{a}_5\tilde{a}_7< 0$.

Case $7$ is equivalent to Case $5$ doing the orthogonal linear
change of variables $(x,y,z)=M(\tilde{x},\tilde{y},\tilde{z})$
with $M$ given by
\[
\left(\begin{array}{ccc} \displaystyle
\frac{a_2a_4-a_1(a_5+a_7)}{\sqrt{(a_1(a_5+a_7)-a_2a_4)^2+a_1^2a_4^2}}
& \displaystyle
\frac{a_4a_1}{\sqrt{(a_1(a_5+a_7)-a_2a_4)^2+a_1^2a_4^2}} & 0
\\ \displaystyle \frac{a_4a_1}{\sqrt{(a_1(a_5+a_7)-a_2a_4)^2+a_1^2a_4^2}} & \displaystyle -\frac{a_2a_4-a_1(a_5+a_7)}{\sqrt{(a_1(a_5+a_7)-a_2a_4)^2+a_1^2a_4^2}} & 0 \\
0 & 0 & 1 \end{array} \right),
\]
where the $a_i$'s denote the coefficients of system \eqref{eq:131}
in the variables $(x,y,z)$ satisfying Case $3$. Note that, as
$a_1\neq 0$ and
$a_4a_8-a_5a_7=(a_2a_4-a_1a_5)(a_1a_7-a_2a_4)/a_1^2<0$, it follows
that $(a_1(a_5+a_7)-a_2a_4)^2+a_1^2a_4^2\neq 0$. Denoting by
$\tilde{a}_i$ the coefficients of system \eqref{eq:131} in the
variables $(\tilde{x},\tilde{y},\tilde{z})$, we have that
$\tilde{a}_1=(a_1(a_5+a_7)-2a_2a_4)a_1/\sqrt{(a_1(a_5+a_7)-a_2a_4)^2+a_1^2a_4^2}$,
$\tilde{a}_2=(a_4(a_2^2-a_1^2)-a_1a_2(a_5+a_7))/\sqrt{(a_1(a_5+a_7)-a_2a_4)^2+a_1^2a_4^2}$,
$\tilde{a}_5=(a_2a_4-a_1a_5)/a_1$,
$\tilde{a}_7=(a_2a_4-a_1a_7)/a_1$,
$\tilde{a}_8=\tilde{a}_2(\tilde{a}_5+\tilde{a}_7)/\tilde{a}_1$,
and  $\tilde{a}_3=\tilde{a}_4=\tilde{a}_6=0$. Now, as
$a_1(a_5+a_7)-2a_2a_4\neq 0$, it follows that $\tilde{a}_1\neq 0$.
Moreover,
$-\tilde{a}_5\tilde{a}_7=(a_2a_4-a_1a_5)(a_1a_7-a_2a_4)/a_1^2< 0$.

We conclude that for determining all phase portraits of all
homogeneous polynomial vector field on $\mathbb S^2$ of degree $2$
having two saddles and four singularities that can be nodes, foci
or centers we have to consider only Case $2$, $4$ and $5$, i.e. we
have to consider the following families
\begin{equation}
\label{eq:142}
\begin{array}{lcl}
\dot{x} & = & a_5yz-y(-a_1x-a_2y), \\
\dot{y} & = & x(-a_1x-a_2y)-z(-a_7x-a_8y), \\
\dot{z} & = & y(-a_7x-a_8y)-a_5xy,
\end{array}
\end{equation}
with $a_1\neq 0$, $a_2(a_5+a_7)-a_1a_8\neq 0$ and $-a_5a_7<0$;

\begin{equation}
\label{eq:143} \dot{x}  =  b_5yz, \;\; \dot{y} = -z(-b_7x-b_8y),
\;\; \dot{z}  =  y(-b_7x-b_8y)-b_5xy,
\end{equation}
with $-b_5b_7<0$; and

\begin{equation}
\label{eq:144}
\begin{array}{lcl}
\dot{x} & = & c_5yz-y(-c_1x-c_2y), \\
\dot{y} & = & x(-c_1x-c_2y)-z\left(-c_7x-\displaystyle \frac{c_2(c_5+c_7)}{c_1}y\right), \\
\dot{z} & = & y\left(-c_7x-\displaystyle
\frac{c_2(c_5+c_7)}{c_1}y\right)-c_5xy,
\end{array}
\end{equation}
with $c_1\neq 0$ and $-c_5c_7<0$.

Consider the vector field $X$ determined by system (\ref{eq:143}).
Doing the orthogonal linear change of variables
\[
\left(\begin{array}{c} x \\ y
\\ z \end{array}\right)=\left(\begin{array}{ccc}
0 & 0  & 1
\\ 0 & 1 & 0 \\
1 & 0 & 0 \end{array} \right)\left(\begin{array}{c} \bar{x} \\
\bar{y}
\\ \bar{z} \end{array}\right),
\]
system (\ref{eq:143}) becomes
\begin{equation}
\label{eq:153} \dot{\bar{x}}  =
-b_8\bar{y}^2-(b_5+b_7)\bar{y}\bar{z}, \;\;\dot{\bar{y}}  =
b_8\bar{x}\bar{y}+b_7\bar{x}\bar{z}, \;\; \dot{\bar{z}}  =
b_5\bar{x}\bar{y}.
\end{equation}
Note that for $a_2=-b_8$, $a_5=-(b_5+b_7)$ and $a_7=b_7$ system
(\ref{eq:153}) becomes the system of Case $3$ in the proof of
Theorem \ref{the:09} of Section \ref{sec:06} with
$a_7^2+a_5a_7<0$.

Now we will study the possible connections of separatrices of
families (\ref{eq:142}) and (\ref{eq:144}) on $\mathbb S^2$. We
have consider only the homoclinic orbits, because as the saddles
of families (\ref{eq:142}) and (\ref{eq:144}) on $\mathbb S^2$ are
diametrally opposite, by Proposition \ref{pro:06}, if $X$ has a
heteroclinic orbit on $\mathbb S^2$, then $X$ has an invariant
circle on $\mathbb S^2$. Then, the phase portraits of homogeneous
polynomial vector fields on $\mathbb S^2$ of degree $2$ having
invariant circles have been classified in \cite{ref:15}.

We saw that families (\ref{eq:142}) and (\ref{eq:144}) have six
singularities, two saddles and other four singularities that can
be nodes, foci or centers. Therefore, the homoclinic orbits that
can appear in families (\ref{eq:142}) and (\ref{eq:144}) on
$\mathbb S^2$ must be one of the described in Figures
\ref{figura39} (a) and (c).

\begin{figure}[ptb]
\begin{center}
\psfrag{A}[l][B]{$(a)$} \psfrag{B}[l][B]{$(b)$}
\psfrag{C}[l][B]{$(c)$}
\includegraphics[height=0.6in,width=3in]{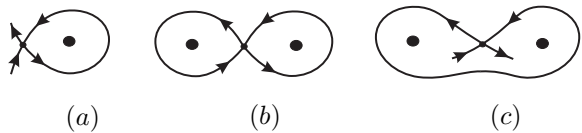}
\end{center}
\caption{Homoclinic orbits. The symbol $\bullet$ denotes a focus,
a center or a node} \label{figura39}
\end{figure}

\begin{proposition}
Let $X$ be a homogeneous polynomial vector field of degree $2$.
Then $X$ cannot realize Figure \ref{figura39} $(b)$ or $(c)$.
\end{proposition}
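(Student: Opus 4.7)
The plan is to argue by contradiction, using three tools: (i) the antipodal symmetry of any quadratic homogeneous vector field on $\mathbb{S}^2$, namely invariance under $(x,y,z,t)\mapsto(-x,-y,-z,-t)$, (ii) the tangency count of Proposition \ref{pro:14}, and (iii) the convexity/separatrix rigidity of Propositions \ref{pro:06} and \ref{pro:17}. By the antipodal symmetry, any homoclinic orbit $\gamma$ based at a saddle $p$ is automatically accompanied by a symmetric homoclinic orbit $-\gamma$ at the antipodal saddle $-p$ (with reversed time); thus any realization of configuration (b) or (c) must actually occur together with its antipodal image, and the closed curves $\gamma\cup\{p\}$ and $-\gamma\cup\{-p\}$ partition $\mathbb{S}^2$ into finitely many regions whose singularities must account, by Poincar\'e--Hopf, for the full Euler characteristic $\chi(\mathbb{S}^2)=2$. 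Since $X$ belongs to family \eqref{eq:142} or \eqref{eq:144} it has exactly six singularities: two antipodal saddles of index $-1$ and four non--saddle singularities (nodes, foci, or centers) of index $+1$.

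For Figure \ref{figura39} (b), which I read as a pair of homoclinic loops issuing from the same saddle, I would first apply the antipodal symmetry to produce a mirror pair at $-p$, yielding four distinct lobes in total. The index balance forces each lobe to contain exactly one of the four non--saddle singularities. I would then select a great circle $C$ passing transversally through the small neighbourhoods of $p$ and $-p$ and argue that $C$ is forced to be tangent to the orbits of $X$ inside each of the four lobes, producing at least five tangencies with the flow of $X$, contradicting Proposition \ref{pro:14}.

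For Figure \ref{figura39} (c), which I read as a homoclinic loop that is not convex in the sense of Proposition \ref{pro:17}, the plan is to extend the convexity property from periodic orbits to homoclinic loops. This extension should follow either by an approximation argument using periodic orbits accumulating on $\gamma$ (when available), or by a direct adaptation of the proof of Proposition \ref{pro:17}: at a point where the tangent great circle to $\gamma$ would meet $\gamma$ a second time, one produces an additional tangency of this great circle with the flow of $X$, driving the total tangency count past four and again contradicting Proposition \ref{pro:14}. If instead configuration (c) forces a shared separatrix between $p$ and $-p$, Proposition \ref{pro:06} places it on a great circle, giving $X$ an invariant circle; but then the classification of \cite{ref:15} shows $X$ has no limit cycles and its phase portrait is already known, contradicting that we are in the non--degenerate unclassified case.

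The principal obstacle is the passage of the convexity and tangency arguments from periodic orbits (where Propositions \ref{pro:14} and \ref{pro:17} are stated) to homoclinic loops. This requires a careful local analysis at the saddle $p$ -- in particular, verifying that the two separatrices leave and return to $p$ along directions for which the tangency count of Proposition \ref{pro:14} still applies to any great circle transversal to them. Once this adaptation is in place, the tangency bound of four in Proposition \ref{pro:14} leaves no room for either of configurations (b) or (c), completing the proof.
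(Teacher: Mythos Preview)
Your strategy---use the antipodal symmetry together with the tangency bound of Proposition~\ref{pro:14}---is exactly the mechanism the paper exploits, but your execution diverges from the paper's in a way that leaves real gaps.

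The paper treats (b) and (c) by a single argument, and its choice of great circle is different from yours. Rather than passing $C$ through neighbourhoods of $p$ and $-p$, the paper works in the central--projection chart at $(0,0,-1)$, takes the straight line through the saddle and a parallel straight line through the other two singularities, and then chooses $C$ so that its image is a third line parallel to these two, close to the saddle line, with all three singularities on one side. The point of this choice is that $C$ then meets the homoclinic configuration \emph{transversally in four points}; from these four transversal crossings one reads off three sign changes of the normal component of $X$ along the southern arc of $C$, and the antipodal symmetry doubles this to six tangencies of $C$ with the flow, contradicting Proposition~\ref{pro:14}.

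Your version instead tries to manufacture tangencies ``one per lobe''. This is where the gap lies: a great circle that enters and exits a lobe containing a focus or node need not be tangent to the flow inside that lobe---the normal component of $X$ along the arc of $C$ inside the lobe could keep a fixed sign. So the claim ``at least five tangencies'' is not justified as stated, and in any case you would need more than four, not five, to contradict Proposition~\ref{pro:14}. The paper's argument avoids this by counting sign changes forced directly by the transversal crossings of the separatrices themselves, which is cleaner and does not require any analysis inside the lobes.

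Your separate treatment of (c) via an extension of Proposition~\ref{pro:17} to homoclinic loops is not needed and is the part you yourself flag as the principal obstacle. The paper never invokes convexity here; once $C$ is chosen as above, the same four--crossing count works for both (b) and (c). So you can drop the convexity--extension plan entirely and replace it with the paper's parallel--line construction.
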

\begin{proof}
Suppose that $X$ realizes some of Figures \ref{figura39} $(b)$ or
$(c)$. We can suppose without loss of generality that the three
singularities of Figure \ref{figura39} $(b)$ or $(c)$ are in the
southern hemisphere. We consider a great circle on $\mathbb S^2$
that contains the saddle and another great circle that contains
the other two singularities. The great circle through the saddle
is chosen in a such way that the two straight lines determined by
these great circles through the central projection (\ref{eq:82})
in the tangent plane to $\mathbb S^2$ at the point $(0,0,-1)$ are
parallel. Eventually these two great circles can coincide. Now,
there exists a different great circle $C$ such that the straight
line determined by it is also parallel to the previous straight
lines, it does not contain the saddle point and the other two
singularities belong to the half--sphere determined by it and
containing the saddle point. As the orbits of $X$ are symmetric
with respect to the origin, these three invariant circles cannot
intersect in two singular opposite points. Moreover, we can choose
$C$ such that the straight line determined by it is sufficiently
close to the straight line that contains the saddle point, and $C$
intersects transversally the homoclinic orbits in four points (see
Figure \ref{figura40}). Note that, as the orbits of $X$ are
symmetric with respect to the origin, $C$ has six tangent points
with orbits of $X$. But this is a contradiction with Proposition
\ref{pro:14}. Therefore, the phase portrait of $X$ cannot realize
Figure \ref{figura39} $(b)$ or $(c)$.
\end{proof}

\begin{figure}[ptb]
\begin{center}
\psfrag{B}[l][B]{$C$}
\includegraphics[height=0.8in,width=2in]{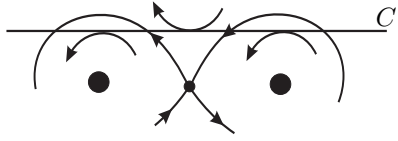}
\end{center}
\caption{The study of cases (b) and (c) of Figure \ref{figura39}}
\label{figura40}
\end{figure}

{From} Figure \ref{figura33} we can see that there exist
homogeneous polynomial vector fields on $\mathbb S^2$ of degree
$2$ having a homoclinic orbit of the type described in Figure
\ref{figura39} $(a)$, where the singularity inside the loop is a
center. However, we do not have examples where this singularity is
a focus or a node, but numerical computation show that they exist.
For example, we can consider the following family of homogeneous
polynomial vector fields on $\mathbb S^2$ of degree $2$ determined
by the system
\begin{equation}
\label{eq:156} \dot{x} = -xy-2y^2+yz, \;\;\dot{y}  =
x^2+2xy+xz+a_8yz, \;\;\dot{z}  =  -2xy-a_8y^2,
\end{equation}
with $a_8\neq 4$. System \eqref{eq:156} is obtained from
\eqref{eq:142} substituting $a_1=-1$, $a_2=-2$, $a_5=1$ and
$a_7=1$. Now, consider the system induced by \eqref{eq:156}
through the central projection on the tangent plane to $\mathbb
S^2$ at $(1,0,0)$, i.e. system \eqref{eq:103} becomes
\begin{equation}
\label{eq:157}
\begin{array}{lcl}
\dot{u} & = & -v-uv-2v^2-2u^2v-a_8uv^2, \\
\dot{v} & = & -u-a_8v+u^2+2uv-2uv^2-a_8v^3.
\end{array}
\end{equation}
This system has three singularities, $(0,0)$, $(1,0)$ and
$(-a_8/(a_8-4),2/(a_8-4))$ with respective eigenvalues $(-a_8\pm
\sqrt{a_8^2+4})/2$, $(2-a_8\pm \sqrt{(2-a_8)^2-16})/2$ and
$(a_8^2-2a_8+2\pm
\sqrt{(a_8^2-2a_8+2)^2-8(a_8^2+(a_8-4)^2+4)})/(2(4-a_8))$. Using
the software $P4$, we can obtain the phase portraits of system
\eqref{eq:157} in the Poincar\'e disc. Thus, for $a_8=8/5$ and
$a_8=9/5$ we obtain the phase portraits given by Figures
\ref{figura42} $(a)$ and $(b)$, respectively.
\begin{figure}[ptb]
\begin{center}
\psfrag{B}[l][B]{$(a)$} \psfrag{C}[l][B]{$(b)$}
\includegraphics[height=0.8in,width=1.7in]{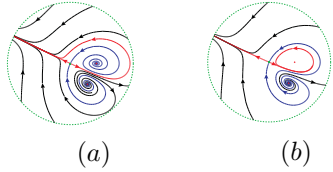}
\end{center}
\caption{Phase portrait of system \eqref{eq:157}.}
\label{figura42}
\end{figure}
In Figure \ref{figura42} we have that $(0,0)$ is a saddle and
$(1,0)$, $(-a_8/(a_8-4),2/(a_8-4))$ are unstable foci. Note that
in Figure \ref{figura42} $(a)$ one of the stable separatrices of
$(0,0)$ emerges from the unstable focus $(1,0)$. Now, in Figure
\ref{figura42} $(b)$ one of the unstable separatrices of $(0,0)$
tend to the unstable focus $(1,0)$. Therefore, there exists at
least one limit cycle surrounding $(1,0)$. We can conclude by
Figure \ref{figura42} that there exists a value of the parameter
$a_8$ belonging to the interval $(8/5,9/5)$ such that there is a
homoclinic orbit surrounding the unstable focus $(1,0)$. In this
homoclinic orbit borns a limit cycle because in Figure
\ref{figura42} $(b)$ the point $(1,0)$ is an unstable focus
surrounded by an unstable separatrix coming from the saddle
$(0,0)$. It seems that moving the parameter of the system this
limit cycle ends in the Hopf bifurcation which takes place at the
singular point $(1,0)$.

We can conclude that if $X$ is a homogeneous polynomial vector
field of degree $2$ on $\mathbb S^2$ with all its singularities
non--degenerate, then its phase portrait on $\mathbb S^2$ is
topologically equivalent to one of the phase portraits on $\mathbb
S^2$ given in Figures \ref{figura41}. This proves Theorem
\ref{the:12}.

\begin{figure}[ptb]
\begin{center}
\includegraphics[height=0.8in,width=2.9in]{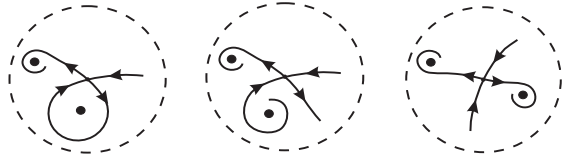}
\end{center}
\caption{Phase portrait on the Poincar\'e disc of homogeneous
polynomial vector fields on $\mathbb S^2$ of degree $2$ with a
saddle. The symbol $\bullet$ denotes either a focus or a node
surrounded perhaps by some limit cycles or a center. Note that the
three singularities also can be surrounded by some limit cycles.}
\label{figura41}
\end{figure}

\section{Hopf bifurcation on homogeneous polynomial vector fields on $\mathbb S^2$ of degree $2$}
\label{sec:12} Let $X=(P,Q,R)$ be a homogeneous polynomial vector
fields on $\mathbb S^2$ of degree $2$ and let $p$ be a singularity
of $X$. Without loss of generality we can suppose that
$p=(0,0,-1)$, because we can do a rotation of $SO(3)$ which
preserves all the properties of $X$. Hence, using the notation of
Section \ref{sec:06}, the singularity $p$ is a {\it weak focus} of
$X$ on $\mathbb S^2$ if $\mbox{tr}(D\tilde{X}(0,0))=0$,
$\mbox{det}(D\tilde{X}(0,0))>0$ and $p$ is not a center.
\begin{proposition}
\label{pro:15} Let $X$ be a homogeneous polynomial vector vector
field on $\mathbb S^2$ and let $p=(0,0,-1)$ a singularity of $X$.
Suppose that the system associated to $X$ is in the form
\eqref{eq:04} with $a_3=a_6=0$. Then $p$ is a weak focus of $X$ if
and only if $a_8=-a_4$, $a_4^2+a_7a_5<0$ and
$a_4(a_1^2-a_2^2)+a_1a_2(a_5+a_7)\neq 0$. Moreover, $p$ is stable
if $a_4(a_1^2-a_2^2)+a_1a_2(a_5+a_7)<0$ and unstable if
$a_4(a_1^2-a_2^2)+a_1a_2(a_5+a_7)>0$.
\end{proposition}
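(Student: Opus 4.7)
The proposition is a natural companion to Theorem~\ref{the:08}: a weak focus is precisely a singular point whose linear part has nonzero purely imaginary eigenvalues and which fails to be a center. Since Theorem~\ref{the:08} already classifies the center case under the first two conditions, the plan is to (i) assemble the linear/non-center conditions to get the ``iff'' part for free, and (ii) read off the stability from the explicit first Lyapunov coefficient $V_1$ that was produced along the way in the proof of Theorem~\ref{the:08}.

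First, by Proposition~\ref{pro:09} the hypothesis that $p=(0,0,-1)$ is a singularity forces $a_3=a_6=0$, so the central projection \eqref{eq:115} is available and its linear part at the origin is $D\tilde{X}(0,0)=\bigl(\begin{smallmatrix}-a_4 & -a_5\\ -a_7 & -a_8\end{smallmatrix}\bigr)$, with $\mathrm{tr}=-(a_4+a_8)$ and $\det=a_4 a_8-a_5 a_7$. For $p$ to be a weak focus I need two things: the eigenvalues at $p$ are nonzero and purely imaginary, which is the pair $\mathrm{tr}=0$, $\det>0$; and $p$ is not a center. The first pair reduces to $a_8=-a_4$ together with $-a_4^2-a_5 a_7>0$, i.e.\ $a_4^2+a_7 a_5<0$. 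Under these conditions Theorem~\ref{the:08} asserts that $p$ is a center if and only if $a_4(a_1^2-a_2^2)+a_1 a_2(a_5+a_7)=0$, so the negation of being a center amounts exactly to $a_4(a_1^2-a_2^2)+a_1 a_2(a_5+a_7)\neq 0$. These three conditions are therefore necessary and sufficient.

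For the stability dichotomy I would invoke the first Lyapunov coefficient extracted in the proof of Theorem~\ref{the:08},
\[
V_1=\frac{(a_5-a_7)\bigl(a_4(a_1^2-a_2^2)+a_1 a_2(a_5+a_7)\bigr)}{8\,a_7\,(-a_4^2-a_5 a_7)^{3/2}},
\]
and analyze its sign. The factor $(-a_4^2-a_5 a_7)^{3/2}$ is positive by the weak focus hypothesis. The factor $(a_5-a_7)/a_7$ has a sign that is fixed along the whole stratum: indeed $a_4^2+a_5 a_7<0$ forces $a_5 a_7<0$, so $a_5$ and $a_7$ always have opposite signs; checking the two subcases $a_5>0>a_7$ and $a_5<0<a_7$ shows $(a_5-a_7)/a_7<0$ in both. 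Consequently the sign of $V_1$ is governed, up to a nonvanishing sign factor, by the sign of $a_4(a_1^2-a_2^2)+a_1 a_2(a_5+a_7)$. Applying the standard Hopf–Lyapunov criterion (the weak focus is stable when $V_1<0$ and unstable when $V_1>0$) then yields the stated stability dichotomy.

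The only delicate step is the last one, namely matching the sign of the prefactor $(a_5-a_7)/(8 a_7(-a_4^2-a_5 a_7)^{3/2})$ against the orientation conventions fixed when normalizing the linear part in the proof of Theorem~\ref{the:08} to $\dot r=-s+\cdots,\ \dot s=r+\cdots$. This is a finite sign bookkeeping and, once done consistently, immediately translates ``$V_1<0\Leftrightarrow$ stable'' into the condition on $a_4(a_1^2-a_2^2)+a_1 a_2(a_5+a_7)$ given in the statement.
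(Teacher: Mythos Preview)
Your proposal is correct and takes essentially the same approach as the paper: the paper's proof is the single sentence ``The proof of this proposition follows directly of the proof of Theorem~\ref{the:08},'' and your argument is precisely an explicit unpacking of that---deducing the iff from the trace/determinant conditions together with the center characterization of Theorem~\ref{the:08}, and reading the stability off the sign of the first Lyapunov coefficient $V_1$ computed there. Your added sign analysis of the prefactor $(a_5-a_7)/a_7$ and the caveat about orientation conventions are more detail than the paper provides, but the route is the same.
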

\begin{proof}The proof of this proposition follows directly of the proof of Theorem
\ref{the:08}.
\end{proof}

Before stating the next theorem, we will state the Hopf
Bifurcation Theorem.

\begin{theorem}{\rm \bf{(Hopf Bifurcation Theorem)}} Suppose that the
analytical parame-\linebreak trized system $\dot{x}=X(x,\mu)$,
$x\in \mathbb R^2$, $\mu \in \mathbb R$, has a singular point at
the origin for all values of the real parameter $\mu$.
Furthermore, suppose that the eigenvalues, $\lambda_1(\mu)$ and
$\lambda_2(\mu)$ of $DX(0,\mu)$, are pure imaginary for
$\mu=\mu_0$. If the real part of the eigenvalues,
$\mbox{Re}\lambda_1(\mu)=\mbox{Re}\lambda_2(\mu)$ in a
neighborhood of $\mu_0$, satisfies $\displaystyle
\frac{d}{d\mu}(\mbox{Re}\lambda_1(\mu))\mid_{\mu=\mu_0}>0$ and the
origin is an asymptotically stable fixed point when $\mu=\mu_0$,
then
\begin{itemize}
\item[(a)] $\mu=\mu_0$ is a bifurcation point of the system;
\item[(b)] for $\mu \in (\mu_1,\mu_0)$, some $\mu_1<\mu_0$, the
origin is a stable focus; \item[(c)] for $\mu \in (\mu_0,\mu_2)$,
some $\mu_2>\mu_0$, the origin is an unstable focus surrounded by
a stable limit cycle, whose size increases with $\mu$.
\end{itemize}
\end{theorem}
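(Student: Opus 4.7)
The plan is to reduce the proof to a one-dimensional analysis of the Poincar\'e first return map on a ray transverse to the flow at the origin, and then to read off statements (a)--(c) from the resulting analytic displacement function. First I would normalize the linear part. Writing the eigenvalues as $\lambda_{1,2}(\mu)=\alpha(\mu)\pm i\beta(\mu)$, the hypotheses give $\alpha(\mu_{0})=0$, $\beta(\mu_{0})\neq 0$ and $\alpha'(\mu_{0})>0$. By an analytic $\mu$-dependent real linear change of coordinates (valid by the analytic perturbation theory of simple eigenvalues) I may assume
\[
DX(0,\mu)=\begin{pmatrix}\alpha(\mu) & -\beta(\mu)\\ \beta(\mu) & \alpha(\mu)\end{pmatrix}.
\]

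Next I pass to polar coordinates $(r,\theta)$, obtaining
\[
\dot r=\alpha(\mu)\,r+O(r^{2}),\qquad \dot\theta=\beta(\mu)+O(r),
\]
with remainders analytic in $(r,\theta,\mu)$. Since $\beta(\mu)\neq 0$ near $\mu_{0}$, the angular velocity does not vanish for small $r$, and I can reparametrize by $\theta$ to get the analytic scalar equation $dr/d\theta=(\alpha(\mu)/\beta(\mu))\,r+h(r,\theta,\mu)$ with $h=O(r^{2})$. Let $\Pi(r,\mu)$ denote the Poincar\'e first return map on $\{\theta=0\}$ and $d(r,\mu)=\Pi(r,\mu)-r$ the displacement function; both are analytic in $(r,\mu)$ near $(0,\mu_{0})$, with $d(0,\mu)\equiv 0$, and from the linearization one computes $\partial_{r}\Pi(0,\mu)=\exp(2\pi\alpha(\mu)/\beta(\mu))$.

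With $\alpha'(\mu_{0})>0$, statements (a) and (b) are immediate: for $\mu$ slightly less than $\mu_{0}$, $\partial_{r}\Pi(0,\mu)<1$ so the origin is a stable focus, whereas for $\mu$ slightly greater than $\mu_{0}$ it is an unstable focus, so $\mu_{0}$ is a bifurcation value. For (c) I would use the asymptotic stability of the origin at $\mu=\mu_{0}$: since the linear part is neutral at $\mu_{0}$, the stability is of higher order and it forces $d(r,\mu_{0})<0$ for all sufficiently small $r>0$. Fix such a radius $r_{\ast}>0$; by continuity $d(r_{\ast},\mu)<0$ for $\mu$ close to $\mu_{0}$. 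For $\mu>\mu_{0}$ sufficiently close, however, $\partial_{r}d(0,\mu)>0$, so $d(r,\mu)>0$ for small $r>0$. The intermediate value theorem then produces a zero $r(\mu)\in(0,r_{\ast})$ of $d(\cdot,\mu)$, corresponding to a periodic orbit; the sign change of $d$ at $r(\mu)$ yields stability of this limit cycle. To obtain uniqueness and the monotone growth of the cycle's size with $\mu$, I would factor the trivial zero by considering $\widetilde d(r,\mu)=d(r,\mu)/r$, which is analytic and satisfies $\widetilde d(0,\mu_{0})=0$ together with $\partial_{\mu}\widetilde d(0,\mu_{0})=2\pi\alpha'(\mu_{0})/\beta(\mu_{0})\neq 0$; the implicit function theorem then supplies a unique analytic branch $\mu\mapsto r(\mu)$ of zeros near $(0,\mu_{0})$, and monotonicity comes out of implicit differentiation.

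The main obstacle is the bridge between linear spectral data and the nonlinear existence of the cycle, namely the extraction from the abstract hypothesis \emph{asymptotically stable at} $\mu_{0}$ of the strict inequality $d(r,\mu_{0})<0$ for all small $r>0$. In the analytic category this is forced: $d(\cdot,\mu_{0})$ is analytic with $d(0,\mu_{0})=0$, so either it vanishes identically, in which case the origin would be a center and not asymptotically stable in the strict sense, or its first nonzero Taylor coefficient must be negative, which is exactly the required sign. Once this sign is in hand, the remainder of the argument is a routine continuity-plus-implicit-function-theorem deployment on the analytic displacement function.
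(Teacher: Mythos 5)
The paper does not actually prove this theorem: it is quoted as a known result and the proof is delegated to the reference of Marsden and McCracken, so there is no internal argument to compare yours against. Your Poincar\'e--return--map proof is the standard one, and most of it is sound: the normal form for the linear part, the analytic displacement function $d(r,\mu)$ with $\partial_{r}\Pi(0,\mu)=e^{2\pi\alpha(\mu)/\beta(\mu)}$, the deduction of (a) and (b) from the sign of $\alpha(\mu)$, the extraction of $d(r,\mu_{0})<0$ for small $r>0$ from asymptotic stability via analyticity, and the intermediate--value production of a cycle for $\mu>\mu_{0}$ are all correct. (For the stability of that cycle you should add a word: $d(\cdot,\mu)$ is analytic and not identically zero, hence has finitely many zeros in $(0,r_{\ast}]$, and since $d$ is positive near $0$ and negative at $r_{\ast}$, at least one of these zeros is a crossing from positive to negative; that one is the stable limit cycle.)

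The step that does not work as written is the final implicit--function--theorem argument. Since $\partial_{\mu}\tilde d(0,\mu_{0})=2\pi\alpha'(\mu_{0})/\beta(\mu_{0})\neq 0$, the IFT solves for $\mu$ as an analytic function of the amplitude, $r\mapsto\mu(r)$, not for $r$ as a function of $\mu$ as you claim. Indeed $\partial_{r}\tilde d(0,\mu_{0})$ is the quadratic coefficient of $d(\cdot,\mu_{0})$, which vanishes (the first nonzero coefficient of the displacement function at a weak focus has odd order $k\geq 3$), so the equation cannot be solved in the direction you state; moreover implicit differentiation of $\tilde d(r,\mu(r))=0$ at $r=0$ yields $\mu'(0)=0$, so ``monotonicity comes out of implicit differentiation'' is not literally true. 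The repair is standard: uniqueness of the cycle for each $\mu$ follows because every small zero of $\tilde d$ with $r>0$ must lie on the graph $\mu=\mu(r)$; and since $\tilde d(r,\mu_{0})=c_{k}r^{k-1}+\cdots\neq 0$ for small $r>0$, the function $\mu(r)-\mu_{0}$ has a nonzero leading Taylor coefficient, which your existence argument forces to be positive, so $\mu(\cdot)$ is strictly increasing on small $r>0$ and its inverse is the (continuous, in general not analytic at $\mu_{0}$) increasing branch $r(\mu)$. With that correction the proof is complete.
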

The proof of Hopf Bifurcation Theorem can be found in
\cite{ref:23}.

\smallskip\noindent {\it Proof of Theorem \ref{the:14}.}
Consider the vector field $X=(P,Q,R)$ of the beginning of this
section. By Sections \ref{sec:06} and \ref{sec:07}, it follows
that if $X$ has a weak focus, then the system associated to $X$ is
equivalent to one of systems \eqref{eq:142} or \eqref{eq:144}.

Consider system \eqref{eq:144} with $c_1\neq 0$ and $-c_5c_7<0$.
This system has six singularities, $(0,0,\pm 1)$,
$(c_7/(\sqrt{c_1^2+c_7^2}),0,c_1/(\sqrt{c_1^2+c_7^2}))$,
$(-c_7/(\sqrt{c_1^2+c_7^2}),0, -c_1/(\linebreak
\sqrt{c_1^2+c_7^2}))$ and two singularities on $\mathbb S^1$,
given by $(-c_2/(\sqrt{c_2^2+c_1^2}), c_1/(\sqrt{c_2^2+c_1^2}),0)$
and $(c_2/(\sqrt{c_2^2+c_1^2}), -c_1/(\sqrt{c_2^2+c_1^2}),0)$ with
respective eigenvalues $\{
0,(\sqrt{c_2^2+c_1^2}\pm\sqrt{c_2^2+c_1^2-4(c_5^2+c_5c_7)})/2\}$,
$ \{
0,(-\sqrt{c_2^2+c_1^2}\pm\sqrt{c_2^2+c_1^2-4(c_5^2+c_5c_7)})/2\}$.
As $c_5c_7\linebreak>0$, the singularities of $X$ on $\mathbb S^1$
are either nodes or strong foci on $\mathbb S^2$.

The planar system determined by system \eqref{eq:144} through the
central projection on the tangent plane to $\mathbb S^2$ at the
point $(0,0,-1)$ is given by
\[
\begin{array}{lcl}
\dot{u} & = & \displaystyle
-c_5v+c_1uv+c_2v^2-(c_5+c_7)u^2v-\frac{c_2(c_5+c_7)}{c_1}uv^2, \\
\dot{v} & = & \displaystyle
-c_7u-\frac{c_2(c_5+c_7)}{c_1}v-c_1u^2-c_2uv-(c_5+c_7)uv^2-\frac{c_2(c_5+c_7)}{c_1}v^3,
\end{array}
\]
see \eqref{eq:103}. This system has two singularities $(0,0)$ and
$(-c_7/c_1,0)$ corresponding to the singularities of $X$ that does
not belong to $\mathbb S^1$. We saw, in Section \ref{sec:07}, that
$(0,0)$ is a saddle. Now,
$(-c_2c_5\pm\sqrt{c_2^2c_5^2-4(c_7^2+c_5c_7)(c_1^2+c_7^2)})/2c_1$
are the eigenvalues of $(-c_7/c_1,0)$. Again, as $c_5c_7>0$, a
necessary condition in order that $(-c_7/c_1,0)$ be a weak focus
is $c_2=0$. If $c_2=0$, by the orthogonal linear change of
variables
\[
\left(\begin{array}{c} \tilde{x} \\ \tilde{y}
\\ \tilde{z} \end{array}\right)=\left(\begin{array}{ccc}
0 & 1 & 0
\\ 1 & 0 & 0 \\
0 & 0 & 1 \end{array} \right)\left(\begin{array}{c} x \\
y \\ z \end{array}\right),
\]
system (\ref{eq:144}) becomes
\[
\dot{\tilde{x}} =  -c_1\tilde{y}^2+c_7\tilde{y}\tilde{z}, \;\;
\dot{\tilde{y}}  =  c_1\tilde{x}\tilde{y}+c_5\tilde{x}\tilde{z},
\;\; \dot{\tilde{z}}  =  -(c_5+c_7)\tilde{x}\tilde{y}.
\]
This system satisfies Case $3$ in the proof of Theorem
\ref{the:09}. Therefore, $(-c_7/\sqrt{c_1^2+c_7^2},0,\linebreak
-c_1/(\sqrt{c_1^2+c_7^2})$ is a center of $X$ on $\mathbb S^2$ and
the phase portrait of $X$ on $\mathbb S^2$ is equivalent to the
phase portrait given by Figure \ref{figura33} $(b)$ or $(c)$.
Thus, in the family (\ref{eq:144}) its singularities cannot be
weak foci and so it does not have a Hopf bifurcation.

Now we consider system (\ref{eq:142}). System \eqref{eq:142} has
six singularities, $(0,0,\pm 1)$,
$(a_5a_8/\sqrt{\sigma},-a_5(a_5+a_7)/\sqrt{\sigma},
(a_2(a_5+a_7)-a_1a_8)/\sqrt{\sigma})$,
$(-a_5a_8/\sqrt{\sigma},a_5(a_5+a_7)/\sqrt{\sigma},\linebreak
-(a_2(a_5+a_7)-a_1a_8)/\sqrt{\sigma})$,
$(a_7/\sqrt{a_7^2+a_1^2},0,$ $a_1/\sqrt{a_7^2+a_1^2})$ and
$(-a_7/\sqrt{a_7^2+a_1^2},0,\linebreak -a_1/\sqrt{a_7^2+a_1^2})$,
where $\sigma = a_5^2(a_8^2+(a_5+a_7)^2)+(a_2(a_5+a_7)-a_1a_8)^2$.
We saw in Section \ref{sec:07} that $(0,0,\pm 1)$ are saddles and
the other four singularities can be nodes, foci or centers. By the
orthogonal linear change of variables
\[
\left(\begin{array}{c} \tilde{x} \\ \tilde{y}
\\ \tilde{z} \end{array}\right)=\left(\begin{array}{ccc}
0 & 1 & 0
\\ \displaystyle \frac{a_1}{\sqrt{a_1^2+a_7^2}} & 0 & \displaystyle -\frac{a_7}{\sqrt{a_1^2+a_7^2}} \\
\displaystyle \frac{a_7}{\sqrt{a_1^2+a_7^2}} & 0 & \displaystyle \frac{a_1}{\sqrt{a_1^2+a_7^2}} \end{array} \right)\left(\begin{array}{c} x \\
y \\ z \end{array}\right),
\]
system (\ref{eq:142}) becomes
\[
\begin{array}{lcl}
\dot{\tilde{x}} & = & \displaystyle
-\frac{a_1a_2+a_7a_8}{\sqrt{a_1^2+a_7^2}}\tilde{x}\tilde{y}-\frac{a_2a_7-a_1a_8}{\sqrt{a_1^2+a_7^2}}\tilde{x}\tilde{z}-
a_1\tilde{y}^2-a_7\tilde{y}\tilde{z}, \\
\dot{\tilde{y}} & = & \displaystyle \frac{a_1a_2+a_7a_8}{\sqrt{a_1^2+a_7^2}}\tilde{x}^2+a_1\tilde{x}\tilde{y}+(a_5+a_7)\tilde{x}\tilde{z}, \\
\dot{\tilde{z}} & = & \displaystyle
\frac{a_2a_7-a_1a_8}{\sqrt{a_1^2+a_7^2}}\tilde{x}^2-a_5\tilde{x}\tilde{y}.
\end{array}
\]
Note that in these coordinates the singularity
$(-a_7/(\sqrt{a_1^2+a_7^2}),0,-a_1/(\sqrt{a_1^2+a_7^2}))$ goes
over the point $(0,0,-1)$. Let $
\tilde{a}_1=-(a_1a_2+a_7a_8)/\sqrt{a_1^2+a_7^2}$,
$\tilde{a}_2=-a_1$,
$\tilde{a}_4=-(a_2a_7-a_1a_8/\sqrt{a_1^2+a_7^2}$,
$\tilde{a}_5=-a_7$, $\tilde{a}_7=a_5+a_7$, and $\tilde{a}_8=0$. By
Theorem \ref{the:08},
$(-a_7/\sqrt{a_7^2+a_1^2},0,-a_1/\sqrt{a_7^2+a_1^2})$ is a center
of system \eqref{eq:142} on $\mathbb S^2$ if $\tilde{a}_4=0$, i.e.
$a_2a_7-a_1a_8=0$,
$\tilde{a}_4^2+\tilde{a}_5\tilde{a}_7=-(a_7^2+a_5a_7)<0$ and $
\tilde{a}_4(\tilde{a}_1^2-\tilde{a}_2^2)+\tilde{a}_1\tilde{a}_2(\tilde{a}_5+\tilde{a}_7)=\sqrt{a_1^2+a_2^2}a_1a_5a_8/a_7=0$.
However, this is not possible, because $a_1a_5a_7\neq 0$,
$a_2a_7-a_1a_8=(\sqrt{a_1^2+a_2^2}a_1a_5a_8)/a_7=0$ implies
$a_2=a_8=0$, which is in contradiction with the hypothesis
$a_2(a_5+a_7)-a_1a_8\neq 0$. Therefore,
$(-a_7/\sqrt{a_7^2+a_1^2},0,-a_1/\sqrt{a_7^2+a_1^2})$ cannot be a
center. Now, if $a_2a_7-a_1a_8=0$, by Proposition \ref{pro:15} it
is a weak focus of system \eqref{eq:142}.

The planar system determined by system \eqref{eq:142} through the
central projection on the tangent plane to $\mathbb S^2$ at the
point $(0,0,-1)$ is
\begin{equation}
\label{eq:145}
\begin{array}{lcl}
\dot{u} & = & \displaystyle
-a_5v+a_1uv+a_2v^2-(a_5+a_7)u^2v-a_8uv^2, \\
\dot{v} & = & \displaystyle
-a_7u-a_8v-a_1u^2-a_2uv-(a_5+a_7)uv^2-a_8v^3,
\end{array}
\end{equation}
see \eqref{eq:103}. This system has three singularities $(0,0)$,
$(-a_7/a_1,0)$ and\linebreak
$(-a_5a_8/(a_2(a_5+a_7)-a_1a_8),a_5(a_5+a_7)/(a_2(a_5+a_7)-a_1a_8))$
corresponding to singularities of $X$ on $\mathbb S^2$. The
singularity $(-a_7/a_1,0)$ corresponds to singularity
$(-a_7/\sqrt{a_7^2+a_1^2},0, -a_1/\sqrt{a_7^2+a_1^2})$ of system
\eqref{eq:142} on $\mathbb S^2$ and its eigenvalues are $
\lambda_\pm(\mu)=(\mu\pm\sqrt{\mu^2-4(a_7^2+a_5a_7)(a_1^2+a_7^2)})/(2a_1)$,
where $\mu=-(a_2a_7-a_1a_8)$. We saw that if $\mu=0$ and
$\sqrt{a_1^2+a_2^2}a_1a_5a_8/a_7<0$, $(-a_7/a_1,0)$ is a stable
weak focus. Moreover, for
$-2\sqrt{(a_7^2+a_5a_7)(a_1^2+a_7^2)}<\mu<2\sqrt{(a_7^2+a_5a_7)(a_1^2+a_7^2)}$
the singular point $(-a_7/a_1,0)$ is a strong focus and
$\displaystyle\frac{d}{d\mu}(\mbox{Re}\lambda_\pm(\mu))\mid_{\mu=0}=1$.
Hence, by the Hopf Bifurcation Theorem, we have a Hopf
bifurcation. This proves that there exist examples of homogeneous
polynomial vector fields on $\mathbb S^2$ of degree $2$ with at
least one limit cycle.

Now, consider the orthogonal linear change of variables
$(\tilde{x}, \tilde{y}, \tilde{z})=M(x,y,z)$, where $M$ is the
matrix
\[
\left(\begin{array}{ccc} \displaystyle
\frac{a_5+a_7}{\sqrt{(a_5+a_7)^2+a_8^2}} & \displaystyle
\frac{a_8(a_2(a_5+a_7)-a_1a_8)}{\sqrt{(a_5+a_7)^2+a_8^2}\sqrt{\sigma}}
& \displaystyle \frac{a_5a_8}{\sqrt{\sigma}}
\\ \displaystyle \frac{a_8}{\sqrt{(a_5+a_7)^2+a_8^2}} & \displaystyle
-\frac{(a_5+a_7)(a_2(a_5+a_7)-a_1a_8)}{\sqrt{(a_5+a_7)^2+a_8^2}\sqrt{\sigma}}
& \displaystyle -\frac{a_5(a_5+a_7)}{\sqrt{\sigma}} \\ 0 &
\displaystyle -\frac{a_5\sqrt{(a_5+a_7)^2+a_8^2}}{\sqrt{\sigma}} &
\displaystyle \frac{a_2(a_5+a_7)-a_1a_8}{\sqrt{\sigma}}
\end{array} \right).
\]
Hence, system (\ref{eq:142}) becomes
\[
\begin{array}{lcl}
\dot{\tilde{x}} & = &
\tilde{a}_1\tilde{x}\tilde{y}+\tilde{a}_2\tilde{y}^2+\tilde{a}_4\tilde{x}\tilde{z}+\tilde{a}_5\tilde{y}\tilde{z}, \\
\dot{\tilde{y}} & = & -\tilde{a}_1\tilde{x}^2-\tilde{a}_2\tilde{x}\tilde{y}+\tilde{a}_7\tilde{x}\tilde{z}, \\
\dot{\tilde{z}} & = &
-\tilde{a}_4\tilde{x}^2-(\tilde{a}_5+\tilde{a}_7)\tilde{x}\tilde{y},
\end{array}
\]
where
$\tilde{a}_1=-(a_1a_2((a_5+a_7)^2-a_8^2)+a_8(a_5a_8^2+(a_5+a_7)(a_2^2-a_1^2+a_5(a_5+a_7))))/(\sqrt{(a_5+a_7)^2+a_8^2}\sqrt{\sigma})$,
$\tilde{a}_2=(a_2(a_5+a_7)-a_1a_2)/\sqrt{(a_5+a_7)^2+a_8^2}$,
$\tilde{a}_4=-(a_8(a_2a_7-a_8a_1)-a_1(a_5^2+a_5a_7))/\sqrt{\sigma}$,
$\tilde{a}_5=a_5$, and $\tilde{a}_7=-(a_5+a_7)$. Note that in
these coordinates the singularity
$(-a_5a_8/\sqrt{\sigma},a_5(a_5+a_7)/\sqrt{\sigma},
-(a_2(a_5+a_7)-a_1a_8)/\sqrt{\sigma})$ becomes the point
$(0,0,-1)$. By Theorem \ref{the:08},
$(-a_5a_8/\sqrt{\sigma},a_5(a_5+a_7)/\sqrt{\sigma},
-(a_2(a_5+a_7)-a_1a_8)/\sqrt{\sigma})$ is a center of system
\eqref{eq:142} on $\mathbb S^2$ if $\tilde{a}_4=0$,
$\tilde{a}_4^2+\tilde{a}_5\tilde{a}_7<0$ and $
\tilde{a}_4(\tilde{a}_1^2-\tilde{a}_2^2)+\tilde{a}_1\tilde{a}_2(\tilde{a}_5+\tilde{a}_7)=0$.
We have that $\tilde{a}_4=0$ implies that
$a_8(a_2a_7-a_8a_1)-a_1(a_5^2+a_5a_7)=0$. Note that in this case
$a_8\neq 0$, otherwise
$\tilde{a}_4=-a_1(a_5^2+a_5a_7)/\sqrt{\sigma}\neq 0$. Hence,
$a_2=a_1(a_5^2+a_5a_7+a_8^2)/(a_7a_8)$ and
$\tilde{a}_4^2+\tilde{a}_5\tilde{a}_7=-(a_5^2+a_5a_7)<0$,
$\tilde{a}_4(\tilde{a}_1^2-\tilde{a}_2^2)+\tilde{a}_1\tilde{a}_2(\tilde{a}_5+\tilde{a}_7)=a_1\sqrt{a_5^2(a_8^2+(a_5+a_7)^2)(a_1^2(a_5+a_7)^2
+a_8^2(a_1^2+a_7^2))}/|a_7a_8|$. As $a_1\neq 0$,
$(-a_5a_8/\linebreak\sqrt{\sigma},a_5(a_5+a_7)/\sqrt{\sigma},
-(a_2(a_5+a_7)-a_1a_8)/\sqrt{\sigma})$ cannot be a center. Now if
$a_8(a_2a_7-a_8a_1)-a_1(a_5^2+a_5a_7)=0$, by Proposition
\ref{pro:15} it is a weak focus of system \eqref{eq:142}.

The singularity
$(-a_5a_8/(a_2(a_5+a_7)-a_1a_8),a_5(a_5+a_7)/(a_2(a_5+a_7)-a_1a_8))$
of system \eqref{eq:145} corresponds to the singularity
$(-a_5a_8/\sqrt{\sigma},a_5(a_5+a_7)/\sqrt{\sigma},
-(a_2(a_5+a_7)-a_1a_8)/\sqrt{\sigma})$ of system \eqref{eq:142} on
$\mathbb S^2$ and its eigenvalues are
$\lambda_\pm(\mu)=(\mu\pm\sqrt{\mu^2-4(a_5^2+a_5a_7)\sigma})/(a_2(a_5+a_7)-a_1a_8)$,
where $\mu=-(a_8(a_2a_7-a_8a_1)-a_1(a_5^2+a_5a_7))$. We saw that
if $\mu=0$ and $a_1/|a_7a_8|\sqrt{(a_1^2(a_5+a_7)^2
+a_8^2(a_1^2+a_7^2))}\cdot\linebreak\sqrt{a_5^2(a_8^2+(a_5+a_7)^2)}<0$,
then
$(-a_5a_8/(a_2(a_5+a_7)-a_1a_8),a_5(a_5+a_7)/(a_2(a_5+a_7)-a_1a_8))$
is a stable weak focus. Moreover, for
$-2\sqrt{(a_5^2+a_5a_7)\sigma}<\mu<2\sqrt{(a_5^2+a_5a_7)\sigma}$,
$(-a_5a_8/(a_2(a_5+a_7)-a_1a_8),a_5(a_5+a_7)/(a_2(a_5+a_7)-a_1a_8))$
is a strong focus and $\displaystyle
\frac{d}{d\mu}(\mbox{Re}\lambda_\pm(\mu))\mid_{\mu=0}=1$. Hence,
by the Hopf Bifurcation Theorem, we have a Hopf bifurcation. Note
that system \eqref{eq:142} cannot have two weak foci at the same
time. \hfill $\Box$

\section{Rotated vector field family}

In this section, we summarize the behavior of the limit cycles in
the special one--parameter family given by a rotated family of
planar vector fields. The earliest work about these families can
be found in the paper \cite{ref:24} of Duff in $1953$. Later on
Seifert \cite{ref:25}, Perko \cite{ref:26} and Chen Xiang--yan
[19, 20, 21], etc successively improved the work of Duff.

Consider the vector fields $X_\alpha(x,y)= (P(x,y,\alpha),
Q(x,y,\alpha))$ depending on the parameter $\alpha$. Suppose that
when $\alpha$ varies on an interval $(a,b)$, the singular points
of the vector fields $X_\alpha$ remain unchanged, and for any
fixed point $p=(x,y)$ and any parameters $\alpha_1<\alpha_2\in
(a,b)$, we have
\begin{equation}
\label{eq:159} \left|\begin{array}{cc}P(x,y,\alpha_1) &
Q(x,y,\alpha_1) \\ P(x,y,\alpha_2) & Q(x,y,\alpha_2)
\end{array}\right|\geq 0 \;(\mbox{or}\leq 0),
\end{equation}
where the equality cannot hold on an entire periodic orbit of
$X_\alpha$ with $\alpha=\alpha_i$, for $i=1,\;2$. Then, the family
of vector fields $X_\alpha$ is called a \index{rotated vector
field family with respect to the parameter $\alpha$}{\it (
generalized) rotated family with respect to the parameter
$\alpha$}. Here, the interval $(a,b)$ can be either bounded or
unbounded.

The geometric meaning of condition \eqref{eq:159} is the
following. At any fixed point $p=(x,y)$, the oriented area between
the vectors $(P(x,y,\alpha_1), Q(x,y,\alpha_1))$ and
$(P(x,y,\alpha_2), Q(x,y,\alpha_2))$ has the same (or opposite)
sign as sgn$(\alpha_2-\alpha_1)$. That is, at any point $p=(x,y)$,
as the parameter $\alpha$ increases, the vector $(P(x,y,\alpha),
Q(x,y,\alpha))$ can only rotate in one direction; moreover, the
angle of rotation cannot exceed $\pi$.

In the following we present four important results concerning
periodic orbits and limit cycles for rotated vector field families
$X_\alpha$.
\begin{itemize}
\item[(i)] {\it Non--intersection property:} For distinct
$\alpha_1$ and $\alpha_2$, the periodic orbits of the vector field
$X_\alpha$ with $\alpha=\alpha_1$ and of the vector field
$X_\alpha$ with $\alpha=\alpha_2$ cannot intersect. \item[(ii)]
{\it Stable and unstable property:} When the parameter $\alpha$
changes slightly and monotonically, the stable and unstable limit
cycle cannot disappear; it expands or contracts monotonically.
\item[(iii)] {\it Semiestable property:} When the parameter
$\alpha$ varies in the suitable direction, a semistable limit
cycle bifurcates into one stable and one unstable limit cycle.
When $\alpha$ varies in the opposite direction the semistable
limit cycle disappears.
\item[(iv)] {\it Ending or starting property:} When the parameter
$\alpha$ varies a family of limit cycles only can disappears or
appears either at a singular point, or in a semistable limit
cycle, or in a separatrix cycle, or at infinity (i.e. the family
becomes unbounded).
\end{itemize}

Consider system \eqref{eq:144} with $c_1\neq 0$ and $-c_5c_7<0$.
This system has six singularities, $(0,0,\pm 1)$,
$(c_7/\sqrt{c_1^2+c_7^2},0,c_1/\sqrt{c_1^2+c_7^2})$,
$(-c_7/\sqrt{c_1^2+c_7^2},0,-c_1/\sqrt{c_1^2+c_7^2})$ and two
singularities on $\mathbb S^1$, given by
$(-c_2/\sqrt{c_2^2+c_1^2},c_1/\sqrt{c_2^2+c_1^2},0)$ and
$(c_2/\linebreak\sqrt{c_2^2+c_1^2}, -c_1/\sqrt{c_2^2+c_1^2},0)$.
We saw in the proof of Theorem \ref{the:14} that the singularities
of $X$ on $\mathbb S^1$ are either nodes or strong foci on
$\mathbb S^2$.

Now we consider the planar system determined by system
\eqref{eq:144} through the central projection on the tangent plane
to $\mathbb S^2$ at the point $(0,0,-1)$, i.e.
\begin{equation}
\label{eq:160}
\begin{array}{lcl}
\dot{u} & = & \displaystyle
-c_5v+c_1uv+c_2v^2-(c_5+c_7)u^2v-\frac{c_2(c_5+c_7)}{c_1}uv^2, \\
\dot{v} & = & \displaystyle
-c_7u-\frac{c_2(c_5+c_7)}{c_1}v-c_1u^2-c_2uv-(c_5+c_7)uv^2-\frac{c_2(c_5+c_7)}{c_1}v^3,
\end{array}
\end{equation}
see \eqref{eq:103}. This system has two singularities $(0,0)$ and
$(-c_7/c_1,0)$ corresponding to the singularities of $X$ that does
not belong to $\mathbb S^1$. We saw, in Section \ref{sec:07}, that
$(0,0)$ is a saddle and, by the proof of Theorem \ref{the:14}, if
$c_2=0$ then $(-c_7/c_1,0)$ is a center; and if $c_2\neq 0$, then
it is a node or a strong focus.

\begin{proposition}
\label{pro:18} The vector field associated to system
\eqref{eq:160} does not have limit cycles.
\end{proposition}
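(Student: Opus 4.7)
My plan is to combine the rotated vector field framework of this section with the center structure at $c_2=0$ already used in the proof of Theorem~\ref{the:14}. The first step is to verify that on each open hemisphere of $\mathbb S^{2}$ the one-parameter family $X=(P,Q,R)$ of \eqref{eq:144} (equivalently, after central projection, the planar family \eqref{eq:160}) is a generalized rotated family with respect to $c_2$. A direct calculation on $\mathbb R^{3}$ gives the identity
\[
X_{c_2}\wedge \partial_{c_2}X_{c_2}=\frac{c_5(c_5+c_7)\,y^{2}\,z}{c_1}\,(x,y,z),
\]
so the rotation density is $\lambda=c_5(c_5+c_7)y^{2}z/c_1$. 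Since $y^{2}\ge 0$ and $z$ keeps its sign on each open hemisphere, and since $c_5(c_5+c_7)/c_1$ is a nonzero constant (the degenerate case $c_5+c_7=0$ forces $R\equiv 0$, so $z$ is a first integral and \eqref{eq:160} trivially has no limit cycles), $\lambda$ has constant sign on each open hemisphere; its zero set is a union of great circles, none of which can contain an entire periodic orbit of $X$ (the great circle $y=0$ is not even invariant). By Proposition~\ref{pro:17}, every periodic orbit of $X$ lies in a single open hemisphere, so the existence of a limit cycle of \eqref{eq:160} is a problem in a genuinely generalized rotated family in $c_2$.

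At the base value $c_2=0$, by the proof of Theorem~\ref{the:14} the system reduces to an instance of Case~3 of the proof of Theorem~\ref{the:09}: the point $(-c_7/c_1,0)$ is a full center (not merely a weak focus) surrounded by a period annulus, and \eqref{eq:160} has no limit cycles. This is the starting point of the rotated-family propagation.

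Now I would argue by contradiction. If \eqref{eq:160} had a limit cycle $\gamma^{*}$ at some $c_2^{*}\ne 0$, properties~(i), (ii) and (iv) would place $\gamma^{*}$ in a monotone one-parameter family $\{\gamma(c_2)\}$ whose endpoints must be a singular point (through a Hopf bifurcation), a semistable limit cycle, a separatrix polycycle, or infinity. A Hopf bifurcation at $(-c_7/c_1,0)$ is excluded because its trace $-c_2c_5/c_1$ vanishes only at $c_2=0$, where the point is a full center with all Lyapunov quantities zero. The saddle $(0,0)$ cannot produce limit cycles, and the singular points on $\mathbb S^{1}$ have eigenvalues with nonzero real part $\pm\sqrt{c_1^{2}+c_2^{2}}/2$ for every $c_2$ (from the spectra recorded in the proof of Theorem~\ref{the:14}), so they are never weak foci either. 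What remains---termination at a semistable cycle or separatrix polycycle as $c_2\to 0$---is ruled out by the non-intersection property~(i) together with the strict sign of $\lambda$: the Melnikov displacement of the perturbed system along every orbit of the $c_2=0$ period annulus is strictly one-signed (being a positive multiple of $\oint\lambda\,ds$), so no perturbed closed orbit can accumulate on the annulus or its boundary. This last step is the main obstacle, because it requires a careful Melnikov argument over the full annulus including its polycycle boundary, but the strict one-signedness of $\lambda$ on the hemisphere is precisely the input that makes the argument work.
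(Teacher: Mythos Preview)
Your core idea is the paper's: \eqref{eq:160} is a generalized rotated family in $c_2$ and has a center at $c_2=0$. But you take a long detour and end with an acknowledged gap, whereas the paper finishes in two lines.

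First, the proposition is about the planar system \eqref{eq:160}, so there is no need to compute on $\mathbb R^3$, invoke Proposition~\ref{pro:17}, or discuss the singularities on $\mathbb S^1$: in this chart those points are at infinity, and by index theory any limit cycle of \eqref{eq:160} must surround the unique non--saddle finite singularity $(-c_7/c_1,0)$. The paper computes the determinant \eqref{eq:159} directly for \eqref{eq:160},
\[
\begin{vmatrix}P(u,v,c_2)&Q(u,v,c_2)\\P(u,v,\tilde c_2)&Q(u,v,\tilde c_2)\end{vmatrix}
=(\tilde c_2-c_2)\,\frac{c_5(c_5+c_7)}{c_1}\,(1+u^2+v^2)\,v^2,
\]
which has constant sign; this replaces your $\mathbb R^3$ computation and the hemisphere discussion entirely.

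Second, your continuation--and--elimination argument is not needed and, as written, is incomplete. You only treat termination ``as $c_2\to 0$'' and never exclude a semistable bifurcation, a homoclinic loop, or escape to infinity at an \emph{intermediate} parameter value $c_2\in(0,c_2^*)$; the Melnikov step you flag as ``the main obstacle'' does not cover those cases either. The paper bypasses all of this: once the family is rotated and $c_2=0$ yields a center, the non--intersection property (i) says that a periodic orbit at any $c_2\ne 0$ cannot cross any leaf of the $c_2=0$ period annulus. Since that annulus foliates the whole region around $(-c_7/c_1,0)$ up to the saddle separatrix, and a closed orbit enclosing the saddle is excluded by index, there is simply no place for a limit cycle. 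Property (iv) is cited alongside (i), but no Hopf/semistable/polycycle case analysis is required.
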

\begin{proof}
Consider the one parameter family of vector fields $(P(u,v,c_2),
Q(u,v,c_2))$ associated to system \eqref{eq:160}. We have that
\[
\left|\begin{array}{cc}P(u,v,c_2) & Q(u,v,c_2) \\
P(u,v,\tilde{c}_2) & Q(u,v,\tilde{c}_2)
\end{array}\right|=(\tilde{c}_2-c_2)\frac{c_5(c_5+c_7)}{c_1}(1+u^2+v^2)v^2.
\]
Therefore, system \eqref{eq:160} determines a rotated vector field
family with respect to the parameter $c_2$. Now, note that for
$c_2=0$, $(-c_7/c_1,0)$ is a center of this family. Then, by the
non--intersection property (i) and the ending or starting property
(iv), system \eqref{eq:160} does not have limit cycles.
\end{proof}

We conclude by Proposition \ref{pro:18} that if the vector field
$X$ on $\mathbb S^2$ associated to system \eqref{eq:144} with
$c_1\neq 0$ and $-c_5c_7<0$ has a limit cycle, then this limit
cycle surrounds one of the singularities that belongs to $\mathbb
S^1$. These singularities always are nodes or strong foci. We
believe that the following conjecture holds.

\medskip \noindent{\bf Conjecture} The vector field associated to
system \eqref{eq:144} with $c_1\neq 0$ and $-c_5c_7<0$ does not
have limit cycles on $\mathbb S^2$.

\addcontentsline{toc}{chapter}{Bibliografia}

\end{document}